\theoremstyle{plain}
\newtheorem{The}{Theorem}
\newtheorem*{The*}{Theorem}
\newtheorem{Pro}{Proposition}
\newtheorem{Lem}{Lemma}
\newtheorem*{Cor*}{Corollary}
\newtheorem{Fac}{Observation}
\theoremstyle{definition}
\newtheorem{Def}{Definition}
\newtheorem{Rem}{Remark}
\newtheorem*{Rem*}{Remark}
\numberwithin{equation}{section}
\DeclareMathOperator{\End}{End}
\DeclareMathOperator{\Hom}{Hom}
\DeclareMathOperator{\Sym}{Sym}
\DeclareMathOperator{\im}{im}
\DeclareMathOperator{\diag}{diag}
\DeclareMathOperator{\dbar}{\bar\partial}
\DeclareMathOperator{\del}{\partial}
\DeclareMathOperator{\ord}{ord}
\DeclareMathOperator{\tr}{tr}
\DeclareMathOperator{\Res}{Res}
\DeclareMathOperator{\supp}{supp}
\DeclareMathOperator{\dist}{dist}
\newcommand{\R}{\mathbb{R}}
\newcommand{\C}{\mathbb{C}}
\newcommand{\N}{\mathbb{N}}
\newcommand{\Z}{\mathbb{Z}}
\renewcommand{\P}{\mathbb{P}}
\newcommand{\todo}[1] {\textbf{\color{red}{#1}}}
\begin{document}

\author{Sebastian Heller}
\author{Charles Ouyang}
\author{Franz Pedit}
\title{Higgs bundles and SYZ geometry}


\maketitle
\begin{abstract}
Using non-Abelian Hodge theory for parabolic Higgs bundles, 
we construct infinitely many non-congruent  hyperbolic affine spheres modeled on a thrice-punctured sphere with monodromy in ${\bf SL}_3(\Z)$. These give rise to non-isometric semi-flat Calabi--Yau metrics on special Lagrangian torus bundles over an open ball in $\R^{3}$  with a Y-vertex deleted, thereby answering a question raised by Loftin, Yau, and Zaslow in \cite{LYZ}, \cite{LYZerr}.
\end{abstract}

\section{Introduction}

One of the predictions of string theory, which describes particles by strings rather than by points, is mirror symmetry. It asserts an equivalence between 
type {IIA} string theory in a spacetime
$Y$ and type {IIB} string theory 
in a spacetime $\check{Y}$.  In this setting, spacetime is $10$-dimensional and to make contact with $4$-dimensional physics, one assumes that at least locally $Y=\R^4\times X$. Consistency of string theory then requires $X$ to be a Calabi--Yau 3-fold and whence mirror symmetry predicts that Calabi--Yau 3-folds come in mirror pairs $(X,\check{X})$. Calculations in the A-model involve the symplectic geometry of $X$ given by the K\"ahler form, whereas calculations in the B-model involve the complex structure of $\check{X}$. This dichotomy led to  a number of spectacular predictions and eventual proofs of deep results in pure mathematics, such as  
the number of rational curves of a given degree in a quintic 3-fold (\cite{Giv}, \cite{LLY}; see also \cite{COGP}).

With the introduction of Dirichlet branes, serving as endpoints of open strings, the correspondence between symplectic data on $X$ and holomorphic data on the mirror $\check{X}$ was formulated in differential geometric terms by Strominger, Yau, and Zaslow \cite{SYZ}.
In the A-model, a Dirichlet brane 
is a special Lagrangian submanifold 
together with 
a flat, unitary line bundle over it.
In the B-model, a Dirichlet brane is a complex submanifold 
 together with a holomorphic line bundle over it.
 Mirror symmetry then predicts a bijection between the moduli spaces of A-branes on $X$ and B-branes on $\check{X}$. 
 Since points with the trivial line bundle are B-branes in $\check{X}$, this bijection implies that $X$ be swept out by a 
 family of A-branes. The moduli space of special Lagrangians  $S\subset X$ has tangent space $H^1(S,\R)$ at $S$ and the flat unitary line bundles $L\to S$ are parametrized by their Abelian monodromies $\Hom(H_1(S,\Z),S^1)$ \cite{McLean}, \cite{Hi3}. Thus, equating dimensions, the first Betti number of $S$ is  $b_1(S)=3$ and $X\to B$ should be a  3-torus fibration over the $3$-dimensional moduli space $B$ of special Lagrangians in $X$.  Elaborating on these ideas and interchanging the roles of $X$ and $\check{X}$
leads to the SYZ-conjecture \cite{SYZ}:

If $(X,\check{X})$ is a mirror pair of Calabi--Yau 3-folds, then $\pi\colon X\to B$ and $\check{\pi}\colon \check{X}\to B$ are special Lagrangian 3-torus fibrations over the same 3-dimensional base $B$ whose fibers $\pi^{-1}(b)$ and $\check{\pi}^{-1}(b)$ are dual tori.  

The base $B$ inherits a {\em integral flat special affine structure}, that is, a torsion-free flat connection $\nabla$ with a parallel volume form $\det_B\in\Omega^3(B,\R)$ such that the monodromy of $\nabla$ takes values in  ${\bf SL}_3(\Z)$.  The simplest examples of such fibrations are the semi-flat ones, where the Calabi--Yau metric is invariant---and hence flat---along the fibers. The geometry of such a Calabi--Yau manifold $X$ is completely determined by a {\em Monge--Amp{\`e}re metric} $\nabla d\phi$ on the base $B$. The 
 {\em real K\"ahler potential} $\phi\colon \tilde{B}\to \R$ is a convex function on the universal cover $\tilde{B}$ with monodromy in the Abelian group of affine functions. The Ricci-flatness of the resulting K\"ahler metric on $X$ is given by  the Monge--Amp{\`e}re equation $\det_B \nabla d\phi=1$ for the positive definite Hessian $\nabla d\phi$.
 Moreover, as a manifold, $X=TB/\Lambda$ is constructed from the tangent bundle of the base $B$  after quotiening by a $\nabla$-parallel full rank lattice bundle $\Lambda\subset TB$.  

In order to obtain interesting examples, as compact semi-flat Calabi--Yau manifolds are flat tori, the  fibration $\pi\colon X\to B$ needs to degenerate along a singular set $\Gamma\subset B$ of  codimension at least two \cite{Joyce}. Besides point singularities and intervals---which have been studied in this context---Loftin, Yau, and Zaslow \cite{LYZ} construct examples of Monge--Amp{\`e}re metrics with certain prescribed asymptotics, assuming $\Gamma$ to be modeled on a single trivalent vertex, a ``Y-vertex'', in a $3$-ball.

Their construction makes use of classical centro-affine differential geometry---the study of hypersurfaces in $\R^{n+1}$ with symmetry group ${\bf SL}_{n+1}(\R)$---initiated by the Blaschke School \cite{Blaschke}, \cite{NomSas}.  A natural class of such hypersurfaces are the affine spheres, which come in three types: elliptic, hyperbolic, and parabolic, depending on whether their affine normal lines intersect in a point on the concave or convex side of the hypersurface, or are parallel. A parabolic affine sphere in $\R^{n+1}$  is obtained as a graph of a convex function $\phi$ on a domain $B\subset \R^n$ satisfying the Monge--Amp{\`e}re equation $\det_B \nabla d\phi=1$ for a torsion-free flat connection $\nabla$ on $TB$ preserving a determinant form $\det_B\in \Omega^n(B,\R)$. 
 In other words, constructing Monge--Amp{\`e}re metrics is the same as constructing parabolic affine spheres.

The analysis of  Monge--Amp{\`e}re equations in dimensions three and higher is difficult, let alone 
controlling the asymptotics and monodromy of a solution along a singular set. To overcome this, the authors \cite{LYZ}, \cite{LYZerr} utilize a method of  Baues and Cort{\'e}s \cite{BauC}, initiated in special cases by  Calabi \cite{Calabi},  to construct parabolic affine spheres as cones over elliptic or hyperbolic affine spheres. Whence, one can construct Monge--Amp{\`e}re metrics on $B$, a 3-ball minus a Y-vertex, by constructing elliptic or hyperbolic 2-dimensional affine spheres modeled on the thrice-punctured sphere\footnote{The complex structure on $\Sigma$ is understood to be that of $\P^1\setminus\{p_1,p_2,p_3\}$.} $\Sigma=S^2\setminus \{p_1,p_2,p_3\}$.  The integrability equation for the existence of an elliptic or hyperbolic 2-dimensional affine sphere is the {\em Tzitz{\'e}ica equation}
\begin{equation}\label{eq:Tzit-globalintro}
2\triangle_{g_0} u +2|Q|^2_{g_0} e^{-4u}-He^{2u}+1=0\,,
\end{equation}
whose solution $g=e^{2u} g_0$ gives the Blaschke metric of the affine sphere. Here $g_0$ denotes the hyperbolic background metric of curvature $-1$ on $\Sigma$ and $Q\in H^0(K_{\Sigma}^3)$ is the holomorphic Pick differential of the affine sphere. 
The sign of the constant $H$ distinguishes the elliptic, $H=-1$, from the hyperbolic, $H=1$, affine spheres and has significant impact on the analysis of the Tzitz{\'e}ica equation \eqref{eq:Tzit-globalintro}. Note that the hyperbolic metric $g_0$ satisfies  \eqref{eq:Tzit-globalintro} in the hyperbolic affine sphere case for $Q\equiv 0$.

The main content of  \cite{LYZ} is the construction, via non-linear elliptic analysis, of a solution to \eqref{eq:Tzit-globalintro} in the elliptic affine sphere case assuming the cubic differential $Q$ to be small with quadratic poles at the punctures $p_k$ and the metric $g=e^{2u} g_0$ to be asymptotic to a rotational metric near $p_k$. Such a solution gives rise to 
an elliptic affine sphere  immersion $f\colon \tilde{\Sigma}\to\R^3$ from the universal cover $\tilde{\Sigma}$ together with a surface group representation $\rho\colon \pi_1(\Sigma)\to{\bf SL}_3(\R)$ 
such that $\gamma^*f =\rho_{\gamma}f$ for $\gamma\in\pi_1(\Sigma)$ acting via deck transformations. 
Applying the coning method of \cite{BauC} 
then provides a flat special affine structure on $B$,  a 3-ball minus a Y-vertex, with monodromy $\rho$ and a convex function $\phi\colon B\to \R$ giving rise to  the Monge--Amp{\`e}re metric $\nabla d\phi$ on $B$. Finally, to construct the Calabi--Yau manifold $X=TB/\Lambda$  as a special Lagrangian torus fibration over $B$, the monodromy $\rho\colon \pi_1(B)\to{\bf SL}_3(\R)$ needs to be integral.   As of yet, it is unknown whether the monodromy of the elliptic affine sphere constructed in \cite{LYZ}  is integral and thus one cannot compare this Calabi--Yau manifold to examples with integral monodromy constructed by algebro-geometric methods \cite{CJL},
\cite{Gross}, \cite{GrossSiebert}. In other words, the metric constructed in \cite{LYZ} only provides an example of a semi-flat Calabi--Yau  fibered over a 3-ball minus a Y-vertex by special Lagrangian $3$-planes, rather than 3-tori.

It has been pointed out in \cite{LYZerr} that one can modify the construction in \cite{BauC} and obtain parabolic affine spheres, and thus Monge--Amp{\`e}re metrics, by coning hyperbolic affine spheres. In this situation, the analysis of the Tzitz{\'e}ica equation \eqref{eq:Tzit-globalintro} becomes much more tractable. The equation is a particular reduction of Hitchin's ${\bf SU}_3$ self-duality equation over a Riemann surface \cite{Hi}, in our case the thrice-punctured sphere $\Sigma$, rather than a reduction of the non-compact ${\bf SU}_{2,1}$ self-duality equation for the elliptic affine sphere case. 
We can thus avail ourselves to results in non-Abelian Hodge theory, which sets up  correspondences between solutions to Hitchin's self-duality equations, Higgs bundles, and representations of surface groups in the case of a compact structure group such as ${\bf SU}_3$. These correspondences turn out to be explicit enough to construct infinitely many non-isometric solutions with integral monodromy $\rho$ to \eqref{eq:Tzit-globalintro} in the hyperbolic affine sphere case. Therefore, we obtain infinitely many non-isometric semi-flat Calabi--Yau 3-folds fibered by special Lagrangian tori over a 3-ball minus a Y-vertex.

Our starting point is the observation that a hyperbolic affine sphere modeled on the thrice-punctured sphere $\Sigma=S^2\setminus \{p_1,p_2,p_3\}$, whose holomorphic cubic Pick differential $Q\in H^0(K_{\Sigma}^3)$ extends meromorphically with quadratic poles into the punctures, corresponds to a stable parabolic Higgs bundle $(W,\Phi)$ over the Riemann sphere $S^2$, where 
\begin{equation}\label{eq:parHiggs}
W=\mathcal{O}(-1)\oplus   \mathcal{O}\oplus\mathcal{O}(1)\quad\text{and}\quad 
\Phi=\begin{pmatrix} 0 &{\bf 1} & 0\\ 0 & 0 &{\bf 1} \\ Q & 0 & 0 \end{pmatrix}.
\end{equation}
If  we introduce the singularity divisor $\mathfrak{D}=p_1+p_2+p_3$, then its corresponding line bundle $\mathcal{O}(\mathfrak{D})=\mathcal{O}(3)$ and since the canonical bundle  $K=K_{S^2}=\mathcal{O}(-2)$, the entries of the Higgs field should be interpreted as follows: ${\bf 1} $ is the unique (up to scale) section in $K\mathcal{O}(-1)\mathcal{O}(\mathfrak{D})=\mathcal{O}$  
 and $Q$, as a cubic differential with quadratic poles at $p_k$, is a holomorphic section of $K^3\mathcal{O}(2\mathfrak{D})=\mathcal{O}$.\footnote{We often omit the tensor product symbol if there is no risk of confusion.} This is consistent with viewing $Q$ as a holomorphic 1-form with values in $\mathcal{O}(2)$. Therefore, the Higgs field $\Phi\in H^0(K{\bf sl}(W)\mathcal{O}(\mathfrak{D}))$ is a meromorphic, trace-free endomorphism-valued 1-form on $\Sigma$ with
simple poles at the punctures $p_k$---encoded by the simple poles of the section ${\bf 1} $---and maximal nilpotent residues
\begin{equation}\label{eq:resHiggs}
\Res_{p_k}\Phi= \begin{pmatrix} 0 &\Res_{p_k}{\bf 1}  & 0\\ 0 & 0 &\Res_{p_k}{\bf 1} \\ 0 & 0 & 0 \end{pmatrix}.
\end{equation}
Note that the moduli space of Higgs bundles \eqref{eq:parHiggs} is parametrized by meromorphic cubic differentials $Q$ with quadratic poles at the punctures $p_k\in S^2$ and therefore is a complex line $\C$. 

Applying Simpson's \cite{S} generalization of the non-Abelian Hodge correspondence to parabolic bundles, we obtain to a Higgs bundle \eqref{eq:parHiggs} a unique Hermitian metric $h$ on $W_{|\Sigma}$ such that
$\Phi$ satisfies the self-duality equation 
\begin{equation}\label{eq:selfdual}
F^{D^h}+[\Phi\wedge \Phi^{\dagger_h}]=0\,
\end{equation}
over $\Sigma$ with $D^h$ the Chern connection. Due to the special form of our Higgs bundle \eqref{eq:parHiggs},  the Hermitian metric $h$ has to respect the decomposition $W=\mathcal{O}(-1)\oplus   \mathcal{O}\oplus\mathcal{O}(1)$, that is, has to be diagonal $h=h_{-1}\oplus h_0\oplus h_{1}$ with $h_{-1}=h_{1}^{-1}$.  Since $\mathcal{O}(1)=K\mathcal{O}(\mathfrak{D})$, the construction of the Higgs bundle \eqref{eq:parHiggs} from the hyperbolic affine sphere implies that $h_1$ defines a Riemannian metric $g$ on $\Sigma$ solving the Tzitz{\'e}ica equation \eqref{eq:Tzit-globalintro}. Moreover, the simple pole structure of the Higgs field $\Phi$ at the punctures $p_k$ implies that the Blaschke metric $g$ has bounded distance near the punctures $p_k\in S^2$ to the cusp metric $g_0$, the hyperbolic metric on the punctured disk which solves \eqref{eq:Tzit-globalintro} for $Q\equiv 0$. In particular, this metric has the same asymptotic behavior as the metric constructed in \cite{LYZerr}, \cite{L}.  The Blaschke metric $g$ and the cubic differential $Q=\det_W \Phi$ determine a unique hyperbolic affine sphere immersion $f\colon \tilde{\Sigma}\to\R^3$ on the universal cover $\tilde{\Sigma}$, which is equivariant with respect to a representation $\rho\colon \pi_1(\Sigma)\to{\bf SL}_3(\R)$.  This representation is of course the monodromy representation of the flat connection $D^h+\Phi+\Phi^{\dagger_h}$. Thus, we need to understand for which cubic  differentials $Q\in H^0(K_{\Sigma}^3)$, with quadratic poles at the punctures $p_k$, this representation is integral, that is, takes values in ${\bf SL}_3(\Z)$ possibly after  a conjugation.

It is known \cite{S} that for Higgs bundles \eqref{eq:parHiggs}  the local monodromies $\rho_k$ around the punctures $p_k$ have minimal polynomial $(\lambda-1)^3$. 
Let $\mathcal{M}_{B}^{ps}$ denote the relative character variety, that is, the moduli space of conjugacy classes 
of completely reducible representations $\rho\colon \pi_1(\Sigma)\to{\bf SL}_3(\C)$, whose generators have characteristic
 polynomial  $(\lambda-1)^3$. Based on a description of Lawton \cite{Law}, we show that $\mathcal{M}_{B}^{ps}$  can be realized via a character map $\mathcal{X}$ as  a cubic hypersurface $\mathcal{F}\subset \C^3$, whose only singularity corresponds to the trivial representation.
 All the smooth points  are irreducible representations. The variety of  real points  $\mathcal{F}(\R)=\mathcal{F}\cap \R^3$ corresponds to conjugacy classes of real representations $\mathcal{M}_{B}^{ps}(\R)$. The latter
has exactly two connected components: $\mathcal{C}_1$, containing the trivial representation, and the Hitchin component $\mathcal{C}_2$, containing the uniformization representation \cite{Hi2}. In particular,  
$\mathcal{C}_2$ is smooth and we show that the non-Abelian Hodge correspondence \cite{S} restricts to  a homeomorphism between the moduli space
of Higgs bundles of type \eqref{eq:parHiggs} and the Hitchin component $\mathcal{C}_2\subset \mathcal{M}_{B}^{ps}(\R)$ in the real relative representation variety. Via this correspondence, 
the Higgs bundle \eqref{eq:parHiggs} with $Q\equiv 0$ corresponds to the uniformization representation.  

At last we come to the question of integral representations. A necessary 
(and possibly sufficient)
condition for a representation $\rho\colon \pi_1(\Sigma)\to{\bf SL}_3(\R)$ to be integral is that its image under the character map $\mathcal{X}_{\rho}\in\mathcal{F}(\Z)$ is contained in the integer points $\mathcal{F}(\Z)=\mathcal{F}\cap \Z^3$ of the character variety. 
In other words, we have to solve a Diophantine problem. Utilizing the fact that the character variety $\mathcal{F}\subset \C^3$ 
admits an explicit birational parameterization, we construct an infinite sequence of non-congruent integral representations in the Hitchin component  $\mathcal{C}_2$ and  also in the component of the identity representation $\mathcal{C}_1$. This proves the following
\begin{The*}
There exists an infinite family of non-isometric  semi-flat
Calabi--Yau metrics on $\pi\colon X/\Lambda\to B$ fibered by special Lagrangian $3$-tori, where $B$ is an open $3$-ball with a Y-vertex deleted.
\end{The*}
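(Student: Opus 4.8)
The plan is to chain together the correspondences developed above so that a single complex parameter---the cubic differential $Q$---controls the entire geometry, and then to isolate an infinite discrete subset of that parameter for which the output is genuinely toric. First I would start from the Higgs bundle moduli space, which by the discussion following \eqref{eq:parHiggs} is the complex line $\C$ of meromorphic cubic differentials $Q\in H^0(K_\Sigma^3)$ with quadratic poles at the punctures $p_k$. For each such $Q$, Simpson's parabolic non-Abelian Hodge correspondence produces the diagonal Hermitian metric $h$ solving the self-duality equation \eqref{eq:selfdual}, whose component $h_1$ yields the Blaschke metric $g=e^{2u}g_0$ solving the Tzitz\'eica equation \eqref{eq:Tzit-globalintro}, and thereby a hyperbolic affine sphere immersion $f\colon\tilde\Sigma\to\R^3$ equivariant under a monodromy representation $\rho_Q\colon\pi_1(\Sigma)\to{\bf SL}_3(\R)$. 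The established homeomorphism between this Higgs moduli and the Hitchin component $\mathcal{C}_2\subset\mathcal{M}_{B}^{ps}(\R)$ guarantees that every $\rho_Q$ lands in $\mathcal{C}_2$, that distinct $Q$ give non-congruent $\rho_Q$, and that $Q\equiv0$ recovers the uniformization representation.

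Next I would feed each hyperbolic affine sphere $f$ into the Baues--Cort\'es coning construction, as modified in \cite{LYZerr}. This yields a flat special affine structure on $B$, an open $3$-ball with a Y-vertex deleted, together with a convex potential $\phi$ whose positive-definite Hessian $\nabla d\phi$ is the sought Monge--Amp\`ere metric and whose monodromy is exactly $\rho_Q$. The associated manifold $X=TB/\Lambda$ then carries a semi-flat Calabi--Yau metric, and its fibers are special Lagrangian $3$-tori---rather than $3$-planes---precisely when the $\nabla$-parallel full-rank lattice bundle $\Lambda\subset TB$ exists, i.e. precisely when $\rho_Q$ is integral, taking values in ${\bf SL}_3(\Z)$ after conjugation.

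The heart of the argument is thus integrality. Using the explicit realization of $\mathcal{M}_{B}^{ps}$ as the cubic hypersurface $\mathcal{F}\subset\C^3$ via the character map $\mathcal{X}$, I would reduce integrality to the Diophantine condition $\mathcal{X}_{\rho_Q}\in\mathcal{F}(\Z)$. Exploiting the explicit birational parameterization of $\mathcal{F}$, I would exhibit an infinite sequence of integer points lying on the real locus $\mathcal{F}(\R)$ inside the smooth component $\mathcal{C}_2$, solving for rational or integral values of the parameters and then clearing denominators. Each such point corresponds, via the homeomorphism, to a unique $Q$ with $\rho_Q$ integral; since the points are distinct and $\mathcal{X}$ is injective on the irreducible (smooth) locus $\mathcal{C}_2$, the representations $\rho_Q$ are pairwise non-congruent. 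Non-congruent monodromy yields non-congruent affine spheres and hence non-isometric Monge--Amp\`ere metrics on $B$, so the resulting semi-flat Calabi--Yau metrics on $X=TB/\Lambda$ are pairwise non-isometric, completing the infinite family.

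The hard part will be the Diophantine step and its promotion to honest integrality. One must produce infinitely many integer points on the cubic $\mathcal{F}$ that lie in the Hitchin component $\mathcal{C}_2$ and not merely in the component $\mathcal{C}_1$ of the trivial representation, which requires controlling where the birational parameterization sends the integral solutions relative to the decomposition $\mathcal{F}(\R)=\mathcal{C}_1\sqcup\mathcal{C}_2$. In addition, one must verify that the necessary condition $\mathcal{X}_\rho\in\mathcal{F}(\Z)$ is sufficient to conjugate $\rho$ into ${\bf SL}_3(\Z)$, which amounts to exhibiting an explicit $\rho$-invariant full-rank lattice; this last point is where the unipotent structure of the local monodromies $\rho_k$ with minimal polynomial $(\lambda-1)^3$ must be used to pin down an integral basis surviving the coning construction.
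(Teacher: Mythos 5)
Your architecture coincides with the paper's: the $\C$-line of Higgs bundles parametrized by $Q$, Simpson's correspondence producing the Blaschke metric and monodromy $\rho^Q$, the homeomorphism onto the Hitchin component $\mathcal{C}_2$ (Theorem~\ref{thm:last}), the coning construction (Lemma~\ref{lem:monodromy}), and the reduction of the torus-fibration property to integrality of $\rho^Q$. However, you leave genuine gaps at exactly the two places where the paper's proof has its real mathematical content. First, the Diophantine step: you say you ``would exhibit an infinite sequence of integer points'' of $\mathcal{F}$ in the Hitchin component, but give no mechanism. Integer points in the component of the trivial representation are easy (the paper takes $\Psi(n,n^2)$); the Hitchin component is the hard case, and the paper needs the ansatz $(s,t)=(1/k,l/k)$, which turns membership in $\mathcal{F}(\Z)\cap C_2$ into the Diophantine equation $(3k+l+3)^2=m(kl-k^3-1)$ of \eqref{eq:m}, and then a nontrivial recursively defined family of solutions \eqref{eq:rec}, $u_{n+1}=23u_n-u_{n-1}-4$ (due to Alekseyev \cite{Al}), to get infinitely many. ``Solving for rational or integral values of the parameters and then clearing denominators'' is not an argument that produces even one such point, let alone infinitely many.

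Second, integrality itself. Your reduction of integrality to the condition $\mathcal{X}_{\rho}\in\mathcal{F}(\Z)$ goes the wrong way: integer character values are necessary but not known to be sufficient (the paper states this explicitly), so exhibiting integer points of $\mathcal{F}$ proves nothing by itself. You do flag this, but your proposed fix---using the unipotent structure of the local monodromies to ``pin down an integral basis surviving the coning construction''---is not a proof and is not what is needed; note also that nothing must ``survive'' the coning, since Lemma~\ref{lem:monodromy} shows the coned connection $\tilde\nabla$ has the same monodromy as $d_V$, so integrality is a property of $\rho$ alone. What the paper actually does is bypass the sufficiency question entirely: the explicit inverse of the character map \eqref{eq:A2-nf11} writes down representation matrices whose denominators are controlled by the solution $(k,l,m)$ of \eqref{eq:m}, and an explicit conjugation (writing $m=ab^2$ with $a$ square-free, so that $ab$ divides $3k+l+3$) lands the representation in ${\bf SL}_3(\Z)$. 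Without this explicit normal form and conjugation, or a genuine substitute for it, your chain of reasoning does not close.
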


The paper navigates between aspects of classical differential geometry and the theory of Higgs bundles, whose practitioners traditionally do not strongly interact. With this in mind, we have attempted to make the exposition largely self-contained and accessible to readers from both communities. This approach leads to passages in the paper of a more expository nature.

\subsection*{Acknowledgements} 
This project was initiated during a research visit of the first and third author
to the Tata Institute of Fundamental Research Mumbai. We thank Indranil Biswas for valuable discussions and TIFR for its hospitality and excellent research conditions.
The first author was supported by the  {\em Deutsche Forschungsgemeinschaft} within the priority program {\em Geometry at Infinity}. The second author acknowledges support from the National Science Foundation through grant DMS-2202832. We thank Max Alekseyev \cite{Al}  for providing an argument for the existence of infinitely many integer solutions of a certain
Diophantine equation. Finally, we thank the referees for their careful reading of the original manuscript and thoughtful comments.

\section{Affine spheres, Higgs bundles, and K{\"a}hler metrics}
\subsection{Affine hypersurface immersions}
We recall some of the results relating affine spheres with flat affine structures and Higgs bundles in a language suitable for the development of  this paper. There are many accounts of the various relationships between these topics, for instance, the classical works of Blaschke \cite{Blaschke}, Calabi \cite{Calabi}, Cheng and Yau \cite{CY}, and more recently Loftin, Yau, and Zaslow \cite{LYZ}, Labourie \cite{La}, and
Loftin \cite{Lsurvey}.
Affine differential geometry studies the geometry of oriented hypersurface immersions $f\colon M^n\to \R^{n+1}$ under the symmetry group of special affine transformations 
${\bf SL}_{n+1}(\R)\ltimes\R^{n+1}$ with respect to a constant volume form $\det\in \Lambda^{n+1}( {\R^{n+1}})^{*}$ on $\R^{n+1}$.  The immersion $f$ is called {\em non-degenerate} if the second fundamental form $\overline{d^2 f}\colon TM\times TM\to \underline{\R}^{n+1}/TM$ is non-degenerate as a symmetric bilinear bundle map, where $\underline{\R}^{n+1}=M\times
\R^{n+1}$ denotes the trivial bundle.  In this case, there exists a unique transverse line subbundle $L\subset \underline{\R}^{n+1}$, that is $\underline{\R}^{n+1}=TM\oplus L$,
and a unique up to sign trivializing section $\xi\in\Gamma(L)$ satisfying
\begin{equation}\label{eq:affine-normal}
\nabla^L\xi=(d\xi)^L=0\quad\text{and}\quad {\det}_g=\det(df,\dots,df,\xi)\,.
\end{equation}
Here ${\det}_g\in\Omega^{n}(M,\R)$ denotes the volume form of the pseudo-Riemannian metric $g=\overline{d^2 f}$ on $M$  obtained from the second fundamental form $\overline{d^2 f}$ by identifying $\underline{\R}^{n+1}/TM\cong L\cong  \underline{\R}$
via the trivializing section $\xi$. The line bundle $L$ is called the {\em affine normal line}, the trivializing section $\xi$ the {\em affine normal}, and the pseudo-Riemannian metric $g$ the {\em Blaschke metric}. In case $g$ is a definite metric, which means $f$ is convex, the sign of $\xi$ is chosen so that $g$ is positive definite and hence a Riemannian metric. The infinitesimal invariants of the hypersurface $f\colon M\to \R^{n+1}$ are obtained from the decomposition of the trivial $\underline{\R}^{n+1}$ connection
\begin{equation}\label{eq:decompose}
d=\begin{pmatrix} \nabla & S\xi^*\\ g\xi & \nabla^L\end{pmatrix}
\end{equation}
with respect to the splitting $\underline{\R}^{n+1}=TM\oplus L$. Here $\xi^*\in\Gamma(L^*)$ denotes the unique section satisfying $\langle \xi^*,\xi\rangle=1$.  The resulting torsion-free connection $\nabla$ on $TM$ is called the {\em Blaschke connection} and $S\in\Gamma(\End(TM))$ the {\em affine Weingarten map}. The flatness of $d$ provides  affine versions of the Gauss-Codazzi equations 
\begin{equation}\label{eq:GC}
R^{\nabla}=g\wedge S\,, \qquad g\circ S=S^*\circ g\,,\qquad  d^{\nabla} g=0\,,\quad d^{\nabla}S=0
\end{equation}
where the second identity expresses the self-adjointness of $S$ with respect to the Blaschke metric $g$. The Blaschke connection $\nabla$, even though torsion-free, is not metric. It relates to the Levi-Civita connection $\nabla^g$ of the Blaschke metric  via
\begin{equation}\label{eq:LeviCivita}
\nabla^g=\nabla+\frac{1}{2}g^{-1}\circ\nabla g\,.
\end{equation}
Due to the third equation in \eqref{eq:GC}, the {\em Pick} or {\em cubic form} 
\begin{equation}\label{eq:Pick}
C=-\frac{1}{2}\nabla g \in \Gamma(\Sym^3(TM,\R))
\end{equation}
is symmetric and---since both $\nabla$ and $\nabla^g$ preserve the volume form ${\det}_g$---has vanishing trace with respect to $g$.  The Pick form $C$  measures the deviation of the hypersurface $f$ from a quadric:  it vanishes identically if and only if $f$ is a quadratic hypersurface.

\subsection{Reconstruction and monodromy}\label{subsec:reconstruction}
For our purposes, it will be necessary to reconstruct a hypersurface  immersion $f\colon M\to\R^{n+1}$ from the geometric invariants $g,S$ and $C$.  Let $M$  be an $n$-dimensional manifold with a (pseudo)-Riemannian metric $g$, a self-adjoint bundle map $S\in\Gamma(\End(TM))$, and a symmetric form $C\in\Gamma(\Sym^3(TM,\R))$ which is trace-free with respect to $g$. Assume that these data fulfill the integrability conditions \eqref{eq:GC},  where the putative Blaschke connection $\nabla$ is given by \eqref{eq:LeviCivita}, \eqref{eq:Pick} as $\nabla=\nabla^g+g^{-1}\circ C$. Then the connection 
\begin{equation}\label{eq:FT}
d_V=\begin{pmatrix} \nabla & S\\ g & d_{\R}\end{pmatrix}
\end{equation}
on the real rank $n+1$ bundle $V=TM\oplus\underline{\R}$ is flat and the volume form ${\det}_V={\det}_g\wedge dt$ on $V$ is parallel. Thus,  $(V,d_V, {\det}_V)$ trivializes over the universal cover $\tilde{M}$ to $(\underline{\R}^{n+1}, d, \det)$ and fixing a base point $p_0\in M$, we have the holonomy representation 
\begin{equation}\label{eq:holonomy}
\rho\colon \pi_1(M,p_0)\to {\bf SL}_{n+1}(\R)
\end{equation}
of the flat connection $d_V$. Furthermore, since $\nabla$ is torsion-free, the inclusion $T\tilde{M}\subset \underline{\R}^{n+1}$ is a closed,  and thus exact,  $1$-form $df\colon T\tilde{M}\to \R^{n+1}$ for a non-degenerate immersion $f\colon \tilde{M}\to \R^{n+1}$. The holonomy representation \eqref{eq:holonomy} and the period representation $\tau\colon \pi_1(M,p_0)\to \R^{n+1}$ of $df$ combine to give the special affine representation 
$(\rho,\tau)\colon \pi_1(M,p_0)\to {\bf SL}_{n+1}(\R)\ltimes\R^{n+1}$, under which the hypersurface immersion $f$ is equivariant, that is,
\begin{equation}\label{eq:equiv}
\gamma^* f =\rho_{\gamma} f + \tau_{\gamma}\,.
\end{equation}
By construction, the Blaschke metric, affine Weingarten map, and Pick form are the data $g,S$ and $C$ from which we started. 

\subsection{Affine spheres}
For the construction of examples of mirror pairs of Calabi--Yau $3$-folds fibered by special Lagrangian 3-tori, we need to understand the geometry of the
 special class of {\em affine sphere} immersions $f\colon M\to\R^{n+1}$ which are convex, that is, the Blaschke metric $g$ is Riemannian. Those come in three types characterized by the condition that the affine normal lines $L_p$ for $p\in M$  meet in a point---which we may assume to be the origin in $\R^{n+1}$ not contained in the image of $f$---or are parallel (meet at infinity). Depending on whether the origin lies on the same or opposite side of the tangent planes as the hypersurface, the affine sphere is called {\em elliptic} or {\em hyperbolic}.  If the affine normal lines are parallel, the hypersurface is called a {\em parabolic} affine sphere. For the hyperbolic or elliptic affine spheres, the affine normal is $\xi=f$ or $\xi=-f$ so that the affine Weingarten map is $S=I$ or $S=-I$, respectively. The pictures to have in mind are one sheet of a hyperboloid or an ellipsoid. A parabolic affine sphere has constant affine normal $\xi$ and hence the affine  Weingarten map $S=0$ is trivial. The integrability equations \eqref{eq:GC} for affine spheres reduce to the first equation
\begin{equation}\label{eq:G}
R^{\nabla}=g\wedge S\,.
\end{equation}
In particular, parabolic affine spheres carry a flat special affine structure given by the Blaschke connection $\nabla$ and the parallel---with respect to $\nabla$ and also $\nabla^g$---volume form $\det_g$. By choosing a transverse  hyperplane $E\subset \R^{n+1}$ to $\R\xi$, a parabolic affine sphere can be (locally) parametrized as a graph over $M\subset E$ via 
\[
f(p)=p+\xi \phi(p)
\]
 for a smooth convex function $\phi\colon M\to \R$. From \eqref{eq:decompose}, we read off that the Blaschke metric $g$ of a parabolic affine sphere  is the Hessian $g=\nabla d\phi$  of $\phi$. The second condition in \eqref{eq:affine-normal},  characterizing the affine normal,  is equivalent to the Monge--Amp{\`e}re equation 

 \[
 {\det}_E \nabla d\phi =1
\]
for the graph function $\phi$ where ${\det}_E=\det(-,\xi)$. Thus, constructing Monge--Amp{\`e}re metrics on a domain is equivalent to constructing parabolic affine spheres.  This observation lies at the heart of the construction of semi-flat Calabi--Yau mirror pairs initiated in \cite{LYZ}. The authors construct  $3$-dimensional parabolic affine sphere metrics from  $2$-dimensional hyperbolic or elliptic affine sphere metrics via a coning method \cite{BauC}. In our setup, it is easier to work with an intrinsic version which also simplifies the proof of this construction.

Let $M$ be an $n$-dimensional manifold with Riemannian metric $g$ and a symmetric, trace-free form $C\in\Gamma(\Sym^3(TM,\R))$ satisfying the integrability condition \eqref{eq:G}
 with $S=H I$ and $\nabla=\nabla^g+g^{-1}\circ C$ where $H=\pm 1$.  As explained in Section~\ref{subsec:reconstruction}, the connection
\begin{equation}\label{eq:FTspheres}
d_V=\begin{pmatrix} \nabla & H I\\ g & d_{\R}\end{pmatrix}
\end{equation}
is flat on the rank $n+1$ bundle $V=TM\oplus\underline{\R}$  with parallel determinant ${\det_V}=\det_g\wedge dt$. Consider the ``coning''  
\[
B=M\times (0,1) 
\]
 of $M$ and let $\pi\colon B\to M$ be the projection $\pi(p,r)=p$. The tangent bundle of $B$ is  canonically identified with the pullback bundle $\pi^*V\to B$ via the bundle isomorphism
 \begin{equation}\label{eq:F-iso}
F\colon TB\to\pi^*V\,,\quad F(v,\mu \partial_r)=(rv,\mu H)\,.
\end{equation}
We denote by $\tilde{\nabla}=F^{-1}\circ \pi^*d_V\circ F$ and $\det_B=H F^*\pi^*{\det}_V$ the flat connection and $\tilde{\nabla}$-parallel volume form on $TB$ corresponding to $\pi^*d_V$ and $\pi^*{\det}_V$ under the isomorphism $F$. 
\begin{Lem}\label{lem:monodromy}
The connection $\tilde{\nabla}$ on $TB$ is torsion-free. Since the fundamental groups  of $M$ and $B$ are canonically isomorphic, the monodromy representation $\rho\colon \pi_1(B)\to {\bf SL}_{n+1}(\R)$ of $\tilde{\nabla}$ is the same as that  of the connection $d_V$ in  \eqref{eq:holonomy}. Hence, $B$ is a special affine flat $n+1$ dimensional  manifold with monodromy $\rho$. 

The function 
\[
\phi\colon B\to \R\,,\qquad \phi(r)=-H\int_0^r (1-H\rho^{n+1})^{\frac{1}{n+1}}\,d\rho\,,\quad H=\pm 1
\]
is convex and satisfies $\det_B \tilde{\nabla} d\phi=1$, thereby defining the Monge--Amp{\`e}re metric $\tilde{\nabla} d\phi$ on $B$. 
\end{Lem}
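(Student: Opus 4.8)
The plan is to reduce the entire statement to an explicit computation in the product frame on $B=M\times(0,1)$, exploiting that $\tilde\nabla$ is by construction gauge-equivalent, through the \emph{global} bundle isomorphism $F$ of \eqref{eq:F-iso}, to the pullback of the flat connection $d_V$ in \eqref{eq:FTspheres}. First I would fix local coordinates $(x^1,\dots,x^n)$ on $M$ with coordinate fields $\partial_i$, adjoin the cone direction $\partial_r$, and let $(\partial_1,\dots,\partial_n,\partial_r)$ frame $TB$. Writing $e_i$ for the pullback of $\partial_i$ into the $TM$-summand of $V$ and $e_0$ for the unit section of the $\underline\R$-summand, the isomorphism reads $F(\partial_i)=r\,e_i$ and $F(\partial_r)=H\,e_0$. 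Computing $\tilde\nabla_X Y=F^{-1}\big((\pi^\ast d_V)_X(FY)\big)$ on this frame by the Leibniz rule, and using that $e_i,e_0$ are pulled back from $M$ (so $(\pi^\ast d_V)_{\partial_r}$ annihilates them) together with the explicit shape of $d_V$, I expect to obtain $\tilde\nabla_{\partial_i}\partial_j=\nabla_{\partial_i}\partial_j+rH\,g_{ij}\,\partial_r$, then $\tilde\nabla_{\partial_i}\partial_r=\tilde\nabla_{\partial_r}\partial_i=\frac{1}{r}\,\partial_i$, and $\tilde\nabla_{\partial_r}\partial_r=0$.

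Torsion-freeness then follows termwise: the purely horizontal torsion vanishes since $\nabla$ is torsion-free on $M$ and $g_{ij}$ is symmetric; the mixed torsion vanishes precisely because the factor $H$ coming from $F(\partial_r)=H e_0$ and the factor $H$ from the off-diagonal block $S=HI$ of $d_V$ combine as $H^2=1$, forcing $\tilde\nabla_{\partial_i}\partial_r=\tilde\nabla_{\partial_r}\partial_i$; and all coordinate brackets vanish. Flatness of $\tilde\nabla$ is immediate because $d_V$ is flat and $F$ is an isomorphism, while parallelism of $\det_B=HF^\ast\pi^\ast\det_V$ is inherited from that of $\det_V$. For the monodromy I would note that $B=M\times(0,1)$ deformation retracts onto $M$, so $\pi_1(B)\cong\pi_1(M)$ and the holonomy of $\pi^\ast d_V$ around a loop equals that of $d_V$ around its projection; since $F$ is a globally defined gauge transformation, conjugation by its value at the basepoint identifies the holonomy of $\tilde\nabla$ with $\rho$, and the $\tilde\nabla$-parallel volume form $\det_B$ keeps $\rho$ in $\SL_{n+1}(\R)$.

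For the Monge--Amp\`ere assertion I would compute the Hessian directly from $(\tilde\nabla d\phi)(X,Y)=X(Y\phi)-(\tilde\nabla_X Y)\phi$ using the frame formulas above. Since $\phi$ depends only on $r$, I expect a block-diagonal matrix with horizontal block $r(1-Hr^{n+1})^{1/(n+1)}g_{ij}$, vertical entry $\phi''=r^n(1-Hr^{n+1})^{-n/(n+1)}$, and vanishing cross terms. On $(0,1)$ both scalar factors are positive for each sign $H=\pm1$, so $\tilde\nabla d\phi$ is positive definite and $\phi$ is convex. Finally, evaluating the determinant relative to $\det_B$: the frame formulas give $\det_B(\partial_1,\dots,\partial_n,\partial_r)=r^n\sqrt{\det g_{ij}}$, so $\det_B\tilde\nabla d\phi=\det[(\tilde\nabla d\phi)(\partial_A,\partial_B)]/\det_B(\partial_1,\dots,\partial_r)^2$; the powers of $(1-Hr^{n+1})$ cancel between the horizontal and vertical blocks, and the surviving $r^{2n}\det g_{ij}$ matches the squared volume density exactly, yielding $1$. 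The integrand defining $\phi$ is engineered precisely so that this cancellation occurs.

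The step I expect to be the main obstacle is bookkeeping rather than conceptual: pinning down the correct reading of the off-diagonal blocks of $d_V$ (which summand $S=HI$ maps into, and with which sign) and tracking the factors of $r$ introduced by $F$, so that the mixed covariant derivatives come out symmetric and the $(1-Hr^{n+1})$ powers cancel in the determinant. In particular, the identity $\tilde\nabla_{\partial_i}\partial_r=\tilde\nabla_{\partial_r}\partial_i$ is where the specific normalization in \eqref{eq:F-iso} is indispensable, and it is the computation most prone to sign or scaling errors.
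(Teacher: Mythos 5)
Your proposal is correct and follows essentially the same route as the paper: computing $\tilde\nabla$ explicitly through the gauge $F$ (your frame formulas agree with the paper's \eqref{eq:nablatilde}), reading off torsion-freeness from the symmetry of the resulting expression, and then verifying positivity of the block-diagonal Hessian and the cancellation of the $(1-Hr^{n+1})$ powers against $\det_B=r^n\det_{\pi^*g}\wedge dr$. The only cosmetic difference is that the paper works with fields $\tilde X=(X,a\,\partial_r)$ and computes $\tilde\nabla d\phi$ via $\tilde\nabla dr$ rather than in a coordinate frame, which changes nothing of substance.
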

\begin{Rem}
This Lemma is an intrinsic version of results in \cite{BauC}, \cite{LYZ}, \cite{LYZerr} and in a special case attributed  to Calabi \cite{Calabi}. The authors consider the hyperbolic, $H=1$, respectively elliptic, $H=-1$, equivariant affine sphere immersion $f\colon \tilde{M}\to\R^{n+1}$ obtained from the flat connection $d_V$ as discussed in \eqref{subsec:reconstruction}. The $n+1$ dimensional manifold $B$ is the domain $B\subset\R^{n+1}$ obtained by coning the hypersurface $f$.  The graph of $\phi$ over $B$ is then shown to give a parabolic affine sphere in $\R^{n+2}$. This extrinsic view point somewhat obscures the monodromies of the affine spheres so constructed. 
\end{Rem}
\begin{proof}
Both assertions follow from calculating the covariant derivative $\tilde{\nabla}_{\tilde{X}}\tilde{Y}$ for vector fields $\tilde{X}=(X,a\,\partial_r)$ and $\tilde{Y}=(Y,b\,\partial_r)$ on $B$, where $X,Y\in\Gamma(TM)$ are vector fields on $M$ and $a,b\in \R$  are constants. Using the explicit form \eqref{eq:FTspheres} of the connection $d_V$, we obtain
\begin{equation*}
\begin{split}
F(\tilde{\nabla}_{\tilde{X}}\tilde{Y}) & =  (\pi^* d_V)_{\tilde{X}} F(\tilde{Y})= (\pi^* d_V)_{\tilde{X}} (rY,bH)\\ & 
=r(\pi^* d_V)_{\tilde{X}} (Y,0)+ dr(\tilde{X})(Y,0)+(\pi^* d_V)_{\tilde{X}}(0,bH) \\ &= 
 r (d_V)_{X}(Y,0)+ a(Y,0) + bH(d_V)_X (0,1)\\ &= 
 (r\nabla_X Y +aY + bX , r g(X,Y))\\&= F(\tfrac{aY+bX}{r}+\nabla_X Y, rHg(X,Y)\del_r)\,,
\end{split}
\end{equation*}
and therefore 
\begin{equation}\label{eq:nablatilde}
\tilde{\nabla}_{\tilde{X}}\tilde{Y}=(\tfrac{aY+bX}{r}+\nabla_X Y, rHg(X,Y)\del_r)\,.
\end{equation}
The latter  is symmetric in $\tilde{X}$, $\tilde{Y}$ and since $\nabla$ has no torsion, the connection $\tilde{\nabla}$ is torsion-free.

The Monge--Amp{\`e}re metric  $\tilde{\nabla} d\phi$ calculates to 
\[
\tilde{\nabla} d\phi=-H \tilde{\nabla} ((1-Hr^{n+1})^{\tfrac{1}{n+1}} dr)= (1-Hr^{n+1})^{\tfrac{-n}{n+1}}r^n dr^2-H (1-Hr^{n+1})^{\tfrac{1}{n+1}}\tilde{\nabla} dr\,.
\]
With $\tilde{X},\tilde{Y}$ as above and \eqref{eq:nablatilde}, we obtain
\[
(\tilde{\nabla}_{\tilde{X}} dr)_{\tilde{Y}}= \tilde{X}\cdot dr(\tilde{Y})-dr(\tilde{\nabla}_{\tilde{X}}\tilde{Y})= -rH\pi^*g(\tilde{X},\tilde{Y})
\]
and therefore
\begin{equation}\label{eq:MAmetric}
\tilde{\nabla} d\phi=(1-Hr^{n+1})^{\tfrac{-n}{n+1}}r^n dr^2+r(1-Hr^{n+1})^{\tfrac{1}{n+1}}\pi^*g\,.
\end{equation}
Since $r\in(0,1)$, this metric is positive definite on $B=M\times (0,1)$. Furthermore,
\[
{\det}_B=H F^*\pi^*({\det}_g\wedge dt)= H F^* {\det}_{\pi^*g}\wedge dt= r^n {\det}_{\pi^*g}\wedge dr\,,
\]
which implies that  $\tilde{\nabla}d\phi$ satisfies the Monge--Amp{\`e}re equation $\det_B \tilde{\nabla}d\phi=1$. 
\end{proof}

 \subsection{Two dimensional affine spheres and Higgs bundles}
The integrability condition for affine spheres \eqref{eq:G}, when expressed in terms of the Levi-Civita connection \eqref{eq:LeviCivita} and the Pick form \eqref{eq:Pick}, splits into  skew and self-adjoint components with respect to the Blaschke metric
\begin{equation}\label{eq:GCspheres}
R^g= H g\wedge I-g^{-1}\circ C\wedge g^{-1}\circ C\,,\qquad \nabla^g C\in\Gamma(\Sym^4(TM,\R))
\end{equation}
with $H=\pm 1$ or $H=0$.  In case $M$ has dimension $n=2$, the Blaschke metric $g$ defines a conformal structure and $M$ becomes a Riemann surface. The Pick form 
\[
C=Q+\bar{Q}\,,\qquad Q\in\Gamma(K^3)
\] 
being trace-free with respect to $g$, can be expressed in terms of a cubic differential $Q$. The second condition in \eqref{eq:GCspheres} now expresses the fact that $Q\in H^0(K^3)$ is holomorphic. The first condition in \eqref{eq:GCspheres}, when written with respect to a fixed conformal background metric $g_0$, that is  $g=e^{2u}g_0$ for a smooth function $u$ on $M$, gives the  Tzitz{\'e}ica or the Liouville equation 
\[
2\triangle_{g_0} u+2|Q|^2_{g_0} e^{-4u}-H e^{2u}-K^{g_0}=0
\]
depending whether $H=\pm 1$ or $H=0$. Here $K^{g_0}$ denotes the curvature function of the metric $g_0$. There are two natural choices for a background metric: first, a local flat metric 
$g_0=|dz|^2$ for a holomorphic chart $(U,z)$ which, with the holomorphic cubic differential $Q=qdz^3$, gives
\[
\triangle u +8|q|^2 e^{-4u}- \tfrac{1}{2}H e^{2u}=0\,,
\]
and second, the global hyperbolic metric $g_0$ of curvature $-1$ yielding
\begin{equation}\label{eq:Tzit-global}
2\triangle_{g_0} u +2|Q|^2_{g_0} e^{-4u}-H e^{2u}+1=0\,.
\end{equation}
Given a Riemann surface $M$ and a holomorphic cubic differential $Q\in H^0(K^3)$, the Tzitz{\'e}ica and Liouville equations
\eqref{eq:Tzit-global} are  {\em zero curvature} equations. They express the flatness of the connection
\[d_V=\begin{pmatrix} \nabla & H I\\ g & d_{\R}\end{pmatrix}
\]
on the rank $3$ bundle  $V=TM\oplus\underline{\R}$ over $M$.  Zero curvature equations play an important role in geometry and mathematical physics. In addition to solving such equations by methods of non-linear analysis, their special structure allows for solutions via integrable systems \cite{Hi-harm}, \cite{BFPP}, non-Abelian Hodge theory \cite{Hi}, \cite{S}, and loop group factorization techniques \cite{DPW},  \cite{HHT}. The latter methods aim to reduce these equations to a $\bar{\partial}$-problem, which significantly facilitates the understanding of the monodromy representation $\rho\colon \pi_1(M,p_0)\to {\bf SL}_3(\R)$ of the flat connection $d_V$. 

Since the integrability equation \eqref{eq:Tzit-global} for a parabolic affine sphere, $H=0$, is Liouville's equation, such a surface has a holomorphic Weierstrass representation similar to minimal surfaces in $\R^3$. We shall exclude this case from here on and focus entirely on the elliptic and hyperbolic affine spheres. The diagonal connection $\nabla^g\oplus d_{\R}$ on
$V=TM\oplus\underline{\R}$ then is metric with respect to the diagonal metric $g\oplus H dt^2$  and by \eqref{eq:LeviCivita}, \eqref{eq:Pick}  the flat connection $d_V$ has the decomposition 
\begin{equation}\label{eq:FT2spheres}
d_V=\begin{pmatrix} \nabla^g & 0 \\ 0 & d_{\R}\end{pmatrix}+\begin{pmatrix}0 & H I \\ g & 0\end{pmatrix}+ \begin{pmatrix} g^{-1}\circ C & 0\\ 0 & 0\end{pmatrix}\,.
\end{equation}
Since $I=Hg^{\dagger}$ with respect to $g\oplus H dt^2$ and $C$ is symmetric, the second and third terms are self-adjoint. 

Consider the complexified bundle $V\otimes\C=TM\otimes\C\oplus \underline{\C}$. Using the Riemann surface structure of $M$, we can further decompose $TM\otimes\C=K^{-1}\oplus \overline{K^{-1}}$ into the direct sum of the dual canonical bundle and its conjugate. Any conformal metric $g$ on $M$ is a Hermitian metric on the complex line bundle $K^{-1}$ identifying $\overline{K^{-1}}$ with $K$.  Rearranging the order of the line bundles, we obtain  the rank $3$ bundle 
\begin{equation}\label{eq:K-cyclic}
V\otimes\C=K^{-1}\oplus\underline{\C}\oplus K
\end{equation}
with direct sum (pseudo) Hermitian metric $h=g\oplus H dt^2\oplus g^{-1}$. The flat connection $d_V$ in \eqref{eq:FT2spheres} 
then has the form
\begin{equation}\label{eq:FT3spheres}
d_{V\otimes \C}=D+\Phi+\Phi^{\dagger}
\end{equation}
where  $D=\nabla^g\oplus d_{\C}\oplus (\nabla^g)^*$ is the diagonal (pseudo) Hermitian connection with respect to $h$ and the Higgs field $\Phi\in \Gamma(K{\bf sl}(V\otimes \C))$ is given by  
\begin{equation}\label{eq:Higgsfield}
\Phi=\begin{pmatrix} 0&{\bf 1}&0\\0&0&{\bf 1}\\Q&0&0\end{pmatrix}.
\end{equation}
Here we use the fact that the cubic form $C=Q+\bar{Q}$ with $Q\in\Gamma(K^3)$, whence $Q$ is a section of $K\Hom(K^{-1},K)$, and ${\bf 1}$ denotes the constant section 
in $\underline{\C}=K\Hom(\underline{\C},K^{-1})=K\Hom(K,\underline{\C})$. The flatness condition for the connection $d_V$ are Hitchin's \cite{Hi} self-duality equations 
\begin{equation}\label{eq:self-duality}
R^D+[\Phi\wedge \Phi^{\dagger}]=0\,,\qquad \dbar^D \Phi=0
\end{equation}
for the (pseudo) Hermitian connection $D$ and Higgs field $\Phi$, which reformulate the Tzitz{\'e}ica equation \eqref{eq:Tzit-global} and the holomorphicity of the Pick differential $Q\in H^0(K^3)$.  Note that the connection $D$ is the Chern connection of the metric $h$ due to the fact that it is metric and induces the holomorphic structure on the bundle $V\otimes \C$.
We summarize this discussion in the following lemma, versions of which can be found in \cite{DP}, \cite{LMcI}, and \cite{La}.
\begin{Lem}\label{lem:DP}
Let $M$ be a Riemann surface, $Q\in \Gamma(K^3)$ a cubic differential, $g$ a conformal  Riemannian metric on $M$,  and $V=K^{-1}\oplus \underline{\C}\oplus K$ the (pseudo) Hermitian rank 3 bundle with diagonal metric $h=g\oplus Hdt^2\oplus  g^{-1}$ where $H=\pm 1$. Then the following are equivalent:
\begin{enumerate}[(i)]
\item 
$g$ and $Q$ are the Blaschke metric and Pick differential for an affine sphere immersion $f\colon \tilde{M}\to \R^3$ equivariant with respect to a representation $\rho\colon \pi_1(M,p_0)\to {\bf SL}_3(\R)$;
\item
$g$ and $Q$ satisfy the Tzitz{\'e}ica equation \eqref{eq:Tzit-global};
\item
The diagonal connection $D=\nabla^{g}\oplus d_{\C}\oplus (\nabla^g)^*$ and the Higgs field $\Phi$ in \eqref{eq:Higgsfield} satisfy the self-duality equations \eqref{eq:self-duality}. 
\end{enumerate}
The representation $\rho$ is the monodromy
of the flat connection $D+\Phi+\Phi^{\dagger}$. Note that for elliptic affine spheres, $H=-1$, the structure group of the self-duality equations is the non-compact group 
${\bf SU}_{2,1}$, whereas for hyperbolic affine spheres, $H=1$, the structure group is ${\bf SU}_3$. 
\end{Lem}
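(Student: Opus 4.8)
The plan is to treat the three statements as three presentations of a single flat connection: the real connection $d_V$ of \eqref{eq:FTspheres}, \eqref{eq:FT2spheres} and its complexified repackaging \eqref{eq:FT3spheres} as $D+\Phi+\Phi^{\dagger}$. I would prove the cycle by establishing (ii)$\Leftrightarrow$(iii) through a direct curvature computation and (i)$\Leftrightarrow$(ii) through the affine-geometric reconstruction of Section~\ref{subsec:reconstruction}, and then read off the monodromy claim by comparing $d_V$ with its complexification.

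For (ii)$\Leftrightarrow$(iii), I would compute the two self-duality equations \eqref{eq:self-duality} for the explicit data $D=\nabla^{g}\oplus d_{\C}\oplus(\nabla^g)^{*}$ and $\Phi$ as in \eqref{eq:Higgsfield}. First, $\dbar^{D}\Phi=0$: since $D$ induces the holomorphic structure on $K^{-1}\oplus\underline{\C}\oplus K$ and the upper entries are the constant section $\mathbf 1$, this equation says exactly that the cubic differential $Q\in\Gamma(K^3)$ is holomorphic. Second, the curvature equation: since $D$ is diagonal, $R^{D}$ is the diagonal of the Chern curvatures of $(K^{-1},g)$, $(\underline{\C},H\,dt^2)$, and $(K,g^{-1})$, which I would express through $g=e^{2u}g_0$ and the curvature $-1$ of the background $g_0$. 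Writing $\Phi^{\dagger}$ with respect to $h=g\oplus H\,dt^2\oplus g^{-1}$ and forming $[\Phi\wedge\Phi^{\dagger}]$, the diagonal $(1,1)$-entries produce the term $2|Q|^2_{g_0}e^{-4u}$ together with $-He^{2u}$, and matching these against $R^{D}$ yields precisely \eqref{eq:Tzit-global}. The equivalence then follows because, on the Riemann surface $M$, flatness of $D+\Phi+\Phi^{\dagger}$ splits by type into exactly these two equations, the $(2,0)$- and $(0,2)$-components of the curvature vanishing automatically. This step is the most delicate: it is entirely a matter of bookkeeping the line-bundle curvatures and of pinning down the normalizations (the factor $2$, the sign of $H$, and the $+1$ coming from $K^{g_0}=-1$) so that the computed equation agrees with \eqref{eq:Tzit-global} on the nose.

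For (i)$\Leftrightarrow$(ii) I would invoke the affine differential geometry developed above. Given an affine sphere immersion as in (i), its Blaschke metric $g$ and Pick differential $Q$ satisfy the affine Gauss--Codazzi equations \eqref{eq:GC}; for an affine sphere $S=HI$, so these collapse to \eqref{eq:G}, and in dimension two, after splitting into self- and skew-adjoint parts with respect to $g$ and writing $C=Q+\bar Q$, they become \eqref{eq:GCspheres}, that is, the Tzitz\'eica equation \eqref{eq:Tzit-global} together with holomorphicity of $Q$. Conversely, given $(g,Q)$ solving \eqref{eq:Tzit-global} with $Q$ holomorphic, I would set $S=HI$ and $C=Q+\bar Q$; these data satisfy the integrability condition \eqref{eq:G}, so the reconstruction of Section~\ref{subsec:reconstruction} produces a non-degenerate immersion $f\colon\tilde M\to\R^3$, equivariant under the holonomy representation $\rho\colon\pi_1(M,p_0)\to{\bf SL}_3(\R)$ of \eqref{eq:FTspheres}, and the condition $S=HI$ is exactly what makes $f$ a hyperbolic ($H=1$) or elliptic ($H=-1$) affine sphere.

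Finally, the monodromy and structure-group assertions are immediate once the equivalences are in place: the flat real connection $d_V$ of \eqref{eq:FTspheres}, \eqref{eq:FT2spheres} has holonomy $\rho$ by the construction in Section~\ref{subsec:reconstruction}, and its complexification is precisely $D+\Phi+\Phi^{\dagger}$ of \eqref{eq:FT3spheres}, so the two share the monodromy $\rho$. The structure group is determined by the signature of the parallel metric $h=g\oplus H\,dt^2\oplus g^{-1}$, which is positive definite for $H=1$, giving ${\bf SU}_3$, and of signature $(2,1)$ for $H=-1$, giving ${\bf SU}_{2,1}$.
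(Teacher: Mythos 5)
Your proposal is correct and takes essentially the same route as the paper, which proves this lemma through the discussion preceding it: (i)$\Leftrightarrow$(ii) comes from splitting the affine sphere integrability condition \eqref{eq:G} into \eqref{eq:GCspheres} (the Tzitz\'eica equation plus holomorphicity of $Q$) together with the reconstruction of Section~\ref{subsec:reconstruction}, and (ii)$\Leftrightarrow$(iii) comes from rewriting the flat connection $d_V$ of \eqref{eq:FT2spheres} in its complexified form \eqref{eq:FT3spheres} as $D+\Phi+\Phi^{\dagger}$, whose flatness is precisely the self-duality equations \eqref{eq:self-duality}. Your direct curvature-matching formulation of (ii)$\Leftrightarrow$(iii), as well as the monodromy and signature observations, coincide with the paper's treatment of the three statements as presentations of one and the same flat connection.
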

Our goal is to construct solutions to the Tzitz{\'e}ica equation \eqref{eq:Tzit-global} on a thrice-punctured sphere $\Sigma=S^2\setminus \{p_1,p_2,p_3\}$ for holomorphic cubic differentials $Q\in H^0(K^3)$ with quadratic poles at the punctures. Due to the previous lemma, this amounts to solving the self-duality equations \eqref{eq:self-duality}. The basic tenet of our approach to solving those equations  is to use the correspondences of non-Abelian Hodge theory for which the Riemann surface and the structure group need to be compact. 
The latter is guaranteed once we restrict to hyperbolic affine spheres, $H=1$, which we shall assume from now on.  For the former, we need to extend the bundle 
\begin{equation}\label{eq:Vsum}
V\otimes\C=K^{-1}\oplus \C\oplus K
\end{equation} 
and the Higgs field $\Phi$  from $\Sigma$ to $S^2$.
Since the bundle $V\otimes \C$ over $\Sigma$ is holomorphically trivial ($\Sigma$ is non-compact), there is no unique extension. But there are guiding principles: 
\begin{itemize}
\item
A holomorphic bundle over $S^2$ splits as a sum of holomorphic line bundles. In view of \eqref{eq:Vsum} this suggests that  the extension of $V\otimes \C$ should be of the form $\mathcal{O}(-d)\oplus \mathcal{O}\oplus \mathcal{O}(d)$, see also Lemma \ref{lem:bundletype} below;  
\item 
The structure \eqref{eq:Higgsfield} of the Higgs field should persist and since $Q$ has quadratic poles at the punctures, that is $Q\in H^0(S^2, K^3 \mathcal{O}(2\frak{D}))$ for the pole divisor $\frak{D}=p_1+p_2+p_3$, we conclude $d=1$.  
\end{itemize}
These considerations give rise to the rank 3 bundle
\begin{equation}\label{eq:S2Higgsbundle}
W=\mathcal{O}(-1)\oplus \mathcal{O}\oplus \mathcal{O}(1)
\end{equation}
over $S^2$ and Higgs field $\Phi\in H^0(S^2, K{\bf sl}(W)\mathcal{O}(\frak{D}))$ of the form
\begin{equation}\label{eq:S2Higgsfield}
\Phi=\begin{pmatrix} 0 &{\bf 1} & 0\\ 0 & 0 &{\bf 1} \\ Q & 0 & 0 \end{pmatrix}.
\end{equation}
Here ${\bf 1}$ denotes the constant section of $K\mathcal{O}(-1)\mathcal{O}(\frak{D})=\mathcal{O}$, that is, a meromorphic $1$-form with values in $\mathcal{O}(-1)$ with simple poles, and thus non-trivial residues, at the punctures $p_k$.  Since $Q$ is a meromorphic cubic differential with quadratic poles at $p_k$, it can be viewed as a holomorphic section of $K^3 \mathcal{O}(2\frak{D})=\mathcal{O}$. By the natural inclusion $H^0(\mathcal{O})\subset H^0(\mathcal{O}(\frak{D}))$ and the isomorphism $\mathcal{O}(\frak{D})=K\mathcal{O}(2)\mathcal{O}(\frak{D})$, we see that $Q$ has the correct form to be an entry in the Higgs field $\Phi$. Because $Q\in H^0(\mathcal{O})=\C$, it  is holomorphic at $p_k$ as a 1-form and thus has no residues. Thus, the Higgs field $\Phi$ has maximal nilpotent residues 
\begin{equation}\label{eq:S2ResHiggsfield}
\Res_{p_k}\Phi= \begin{pmatrix} 0 &\Res_{p_k}{\bf 1}  & 0\\ 0 & 0 &\Res_{p_k}{\bf 1} \\ 0 & 0 & 0 \end{pmatrix}\in {\bf sl}(W_{p_k})\,.
\end{equation}
at the punctures $p_k\in S^2$. 

At this stage, we have started from a hyperbolic affine sphere over a thrice-punctured 2-sphere $\Sigma=S^2\setminus\{p_1,p_2,p_3\}$  and derived what is known as a {\em parabolic Higgs bundle} $(W,\Phi)$ over the 2-sphere $S^2$. Our construction of semi-flat Calabi--Yau metrics is based on solutions of the Tzitz{\'e}ica equation \eqref{eq:Tzit-global}, that is, self-duality solutions \eqref{eq:self-duality}, which form one of the three cornerstones of the non-Abelian Hodge correspondence. The other two cornerstones are Higgs bundles and surface group representations, and eventually we shall need the full triangle of correspondences. Since parabolic non-Abelian Hodge theory is more delicate and perhaps less well-known to an intended wider audience of this paper, we briefly review the basic background and refer the reader to \cite{S},  and \cite{KW}, \cite{B}  for further details.

Let $M$ be a compact Riemann surface and $\frak{D}=p_1+\dots +p_n$ the singularity divisor given by distinct points in $M$.
We shall denote the punctured Riemann surface by $\mathring{M}=M\setminus\supp\frak{D}$.
 A {\em parabolic vector bundle} is a holomorphic vector bundle $W\to M$ of rank $r$ together with a flag 
\[ 
W_p=F_p^{1} \supsetneq F_p^{2}\supsetneq \dots \supsetneq F_p^{m_p}\supsetneq \{0\}
\]
in the fiber over each singular point $p\in \supp{\frak{D}}$. Attached to each flag, there is 
an increasing sequence of real numbers, called the {\em parabolic weights},
\[
0\leq\alpha_p^1 < \alpha_p^2<\dots <\alpha_p^{m_p}<1\,.
\]
Note that a holomorphic vector bundle on a  punctured Riemann surface has no unique extension across the punctures. The extra data provided by the parabolic weight filtration generically characterize such an extension. 

A {\em parabolic Higgs bundle} is  a parabolic vector bundle $W\to M$ with a trace-free, meromorphic endomorphism-valued 1-form, the {\em Higgs field},
\[
\Phi\in H^0(M, K{\bf sl}(W)\mathcal{O}(\frak{D}))
\]
with at most simple poles along $\frak{D}$, such that the residues $\Res_{p_k}\Phi \in {\bf sl}(W_{p_k})$ preserve the flags $\{F_{p_k}^i\}_{i=1}^{m_{p_k}}$.  If the residues satisfy $\Res_{p_k}\Phi (F_{p_k}^{i})\subset F_{p_k}^{i+1}$, hence are {\em nilpotent}, the parabolic Higgs bundle is called {\em strongly parabolic}.  Two parabolic Higgs bundles $(W,\Phi)$ and $(\tilde{W},\tilde{\Phi})$ are {\em isomorphic}, if there exists a holomorphic bundle isomorphism $W\cong \tilde{W}$ intertwining the Higgs fields.

For the purposes of this paper, we shall only consider parabolic vector bundles $W$ with trivial flags 
$W_{p_k}\supset \{0_{p_k}\}$ and zero parabolic weights. In this case, we call the parabolic bundle {\em strongly parabolic} if  the residues $\Res_{p_k}\Phi \in {\bf sl}(W_{p_k})$ are nilpotent. Since all parabolic weights are zero,  the notion of  {\em parabolic degree} coincides with the usual degree of the bundle.  A parabolic Higgs bundle of $\deg W=0$  is then called {\em stable}, respectively {\em semi-stable}, if every $\Phi$-invariant holomorphic subbundle $E\subset W$ has $\deg E < 0$, respectively $\deg E \leq 0$. 

Note  that the 
parabolic Higgs bundle 
\begin{equation}\label{eq:Higgsbundle}
W=\mathcal{O}(-1)\oplus \mathcal{O}\oplus \mathcal{O}(1)\,,\qquad \Phi=\begin{pmatrix} 0 &{\bf 1} & 0\\ 0 & 0 &{\bf 1} \\ Q & 0 & 0 \end{pmatrix}
\end{equation}
derived in  \eqref{eq:S2Higgsbundle}, \eqref{eq:S2Higgsfield} is stable strongly parabolic, and all its residues \eqref{eq:S2ResHiggsfield} are maximally nilpotent, that is, have minimal polynomial $\chi(\lambda)=\lambda^3$.

In order to recover a solution to the Tzitz{\'e}ica equation \eqref{eq:Tzit-global} from the parabolic Higgs bundle \eqref{eq:Higgsbundle}, we first need to solve the self-duality equations
\begin{equation}\label{eq:self-duality-general}
F^{D}+[\Phi\wedge \Phi^{\dagger_h}]=0\,,\qquad \dbar^{D} \Phi=0
\end{equation}
for a Hermitian metric $h$ on $W_{| \mathring{M}}$. Here  $D=D^h$ denotes the Chern connection, that is, the unique Hermitian connection for $h$ on $\mathring{M}$ compatible with the holomorphic structure on $W$. The self-duality equations imply the flatness of the ${\bf SL}$-connection
\begin{equation}\label{eq:flat}
d_W=D+\Phi+\Phi^{\dagger_h}\,,
\end{equation} 
which in turn defines the surface group representation $\rho\colon \pi_1(M,p_0)\to {\bf SL}(W_{p_0})$ via its monodromy. 

The non-Abelian Hodge correspondences between Higgs bundles $(W,\Phi)$  and self-duality solutions \eqref{eq:self-duality-general}, and between the latter and flat connections, or equivalently, surface group representations $\rho\colon \pi_1(M,p_0)\to {\bf SL}_r(\C)$, are well established \cite{Hi}, \cite{S}. We collect  the relevant results only for the special case of strongly parabolic Higgs bundles of degree zero with trivial flags and weights at the punctures and refer the reader to \cite{S} for the more general setup. The centerpiece of these correspondences is the existence of a unique  harmonic Hermitian metric $h$ on the Higgs bundle or the flat bundle belonging to a representation of the fundamental group. 

In the parabolic setting, uniqueness of the harmonic metric $h$ requires 
a certain growth condition, called {\em tameness}, 
\begin{equation}\label{eq:tame-distance}
\sup_{p\in U^{\times}_{p_k}}\dist ( h_p,\tilde{h}_p)<C
\end{equation}
to be imposed at the punctures $p_k\in M$ for some $C>0$.  Here $z\colon U_{p_k}\to \mathbb{D}$ denotes a centered chart onto the open unit disk $\mathbb{D}\subset \C$ over which $W$ holomorphically trivializes. To calculate the distance, we view the Hermitian metrics---in the holomorphic trivialization---as maps into the symmetric space ${\bf SL}_r(\C)/{\bf SU}_r$.  

The comparison metric $\tilde{h}$, which we describe only for the parabolic Higgs bundle \eqref{eq:S2Higgsbundle}, is given as follows:  the generalized eigenspaces of $\Res_{p_k}\Phi$ define a full flag in $W_{p_k}$ and thus, by choosing some Hermitian inner product  in $W_{p_k}$,  a decomposition 
\[
W_{p_k}=\ell_{-2}\oplus\ell_0\oplus \ell_{2}
\]
into lines where $\ell_{-2}$ is the kernel of $\Res_{p_k}\Phi$. 

Extending those lines to local holomorphic line subbundles of $W$ over the chart domain $U_{p_k}$  provides the decomposition  
\[
W_{|U_{p_k}}=L_{-2}\oplus L_0\oplus L_2\,.
\]
Since $\mathcal{O}(-1)\subset W$ is contained in the kernel of $\Res \Phi$ globally, we can choose $L_{-2}=\mathcal{O}(-1)$ over $U_{p_k}$. The comparison metric in this splitting, with respect to a Hermitian metric making the splitting orthogonal,  is then given as 
\begin{equation}\label{eq:comp-metric} 
\tilde{h}=|\log r|^{-2}\oplus 1\oplus |\log r|^2
\end{equation}
with $r=|z|$ the distance in the chart $U_{p_k}$. Even though this description involves choices on which $\tilde{h}$ will depend, tameness \eqref{eq:tame-distance} is independent of these choices and thus well-defined. 

\begin{The}[Simpson, \cite{S}]\label{thm:nAH}
Let $M$ be a compact Riemann surface, $\frak{D}=p_1+\dots+  p_n$ the singularity divisor, $\mathring{M}=M\setminus \supp\frak{D}$, and $[\gamma_k] \in \pi_1(\mathring{M},p_0)$ generators of the loops around $p_k\in M$.  Then the following holds: 
\begin{enumerate}[(i)]
\item
A strongly stable parabolic Higgs bundle $(W,\Phi)$ on $M$ of degree zero with trivial flags and weights at the punctures admits a unique tame Hermitian metric $h$ on $W_{| \mathring{M}}$ with $\det_W h=1$, which induces a unique solution of the self-duality equations  \eqref{eq:self-duality-general} on $\mathring{M}$.  The resulting flat ${\bf SL}$-connection $d_W=D+\Phi+\Phi^{\dagger_h}$ on $W_{| \mathring{M}}$, with $D$ the Chern connection, is irreducible and gives rise to the irreducible monodromy representation $\rho\colon \pi_1(\mathring{M},p_0)\to {\bf SL}(W_{p_0})$ with unipotent monodromies $\rho(\gamma_k)$ around the punctures $p_k\in M$.   
\item
Every irreducible representation $\rho\colon \pi_1(\mathring{M},p_0)\to {\bf SL}_r(\C)$ with unipotent  $\rho(\gamma_k)$ arises 
from (i) for a unique degree zero strongly parabolic Higgs bundle $(W,\Phi)$ on $M$ with trivial flags and weights at the punctures $p_k\in M$.
\item
The minimal polynomials of the nilpotent residues $\Res_{p_k}\Phi\in {\bf sl}(W_{p_k})$ are the same as the minimal polynomials of the nilpotent parts $\rho(\gamma_k)-\mathrm{I}$ of the unipotent monodromy around the punctures $p_k\in M$. 
\end{enumerate}
\end{The}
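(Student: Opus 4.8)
The plan is to deduce all three statements from Simpson's tame non-Abelian Hodge correspondence for parabolic Higgs bundles \cite{S}, after checking that the restricted data considered here---degree zero, trivial flags, vanishing weights---form a special instance of his framework and that the stability notion used above coincides with his parabolic stability. The deep analytic content, namely the existence and uniqueness of the tame harmonic metric, I would import wholesale from \cite{S}; the remaining work is to reconcile conventions and to extract the local monodromy information in (iii) by an elementary computation near each puncture. The main obstacle is precisely what I do \emph{not} reprove: the existence and uniqueness of the tame harmonic metric, both for the Higgs bundle in (i) and for the flat bundle in (ii).

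For (i), I would first observe that since all parabolic weights vanish the parabolic degree equals the ordinary degree, so our stability condition---every $\Phi$-invariant holomorphic subbundle $E\subset W$ has $\deg E<0$ when $\deg W=0$---is exactly Simpson's parabolic stability. Simpson's existence theorem then supplies a unique tame Hermitian metric $h$ on $W_{|\mathring{M}}$ solving the self-duality equations \eqref{eq:self-duality-general}, and the normalization $\det_W h=1$ is available because $\deg W=0$; here tameness is the bounded-distance condition \eqref{eq:tame-distance} measured against the model metric \eqref{eq:comp-metric}. Irreducibility of the resulting flat connection \eqref{eq:flat} follows from stability: a proper $d_W$-parallel subbundle would, via the harmonic metric, split off a $\Phi$-invariant holomorphic subbundle of degree zero, contradicting stability. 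Unipotence of the local monodromies $\rho(\gamma_k)$ is a consequence of the vanishing weights together with the nilpotency of $\Res_{p_k}\Phi$: the model growth \eqref{eq:comp-metric} forces the flat connection near $p_k$ into a logarithmic gauge $d+\tfrac{A_k}{z}\,dz+\dots$ with $A_k$ nilpotent, whence the monodromy $\exp(-2\pi i A_k)$ is unipotent.

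For (ii), I would run the correspondence in reverse. Given an irreducible $\rho\colon\pi_1(\mathring{M},p_0)\to{\bf SL}_r(\C)$ with unipotent $\rho(\gamma_k)$, I form the associated flat bundle over $\mathring{M}$; the existence theorem for tame harmonic metrics on flat bundles \cite{S} produces a unique tame harmonic metric, with respect to which the flat connection decomposes as $D+\Phi+\Phi^{\dagger_h}$ with $D$ unitary and $\Phi$ holomorphic, i.e.\ a solution of \eqref{eq:self-duality-general}. Tameness together with unipotence of the monodromy guarantees that $\dbar^{D}$ extends to a holomorphic structure on an extension $W\to M$ and that $\Phi$ extends to a meromorphic Higgs field with at most simple poles and nilpotent residues, with trivial flags and zero weights; stability of $(W,\Phi)$ then mirrors irreducibility of $\rho$, and uniqueness of the extension is part of \cite{S}.

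For (iii), the statement is local at each $p_k$ and reduces to linear algebra. By the local structure theory of \cite{S}, the flat connection near $p_k$ is gauge-equivalent to $d+\tfrac{A_k}{z}\,dz+\dots$ with $A_k$ nilpotent and conjugate to $\Res_{p_k}\Phi$, and the monodromy is $\rho(\gamma_k)=\exp(cA_k)$ with $c=-2\pi i$. Its nilpotent part is then
\[
\rho(\gamma_k)-\mathrm{I}=\exp(cA_k)-\mathrm{I}=cA_k\bigl(\mathrm{I}+\tfrac{c}{2}A_k+\dots\bigr),
\]
where the bracketed factor is unipotent, hence invertible. Therefore $\rho(\gamma_k)-\mathrm{I}$ and $A_k$ have the same kernels and the same ranks of all powers, so the same Jordan type and in particular the same minimal polynomial; combined with the conjugacy $A_k\sim\Res_{p_k}\Phi$, this is exactly the asserted equality. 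The only genuinely new labor beyond citing \cite{S} is this bookkeeping together with the verification that our degree, flag, and weight conventions match Simpson's parabolic stability and model metric.
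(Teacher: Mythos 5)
Your proposal is correct and takes essentially the same route as the paper: the paper offers no proof of its own for this theorem---it is stated as a citation of Simpson \cite{S}, with the surrounding text only fixing the conventions (degree zero, trivial flags and weights, tameness against the model metric \eqref{eq:comp-metric})---and your plan of importing the existence/uniqueness of the tame harmonic metric from \cite{S}, checking that parabolic stability reduces to ordinary slope stability when all weights vanish, and reading off unipotence and the Jordan type of the local monodromy from Simpson's local structure theory is exactly the intended content. If anything, your elementary observation that $\exp(cA_k)-\mathrm{I}=cA_k(\mathrm{I}+\tfrac{c}{2}A_k+\dots)$ forces $\rho(\gamma_k)-\mathrm{I}$ and $A_k$ to have the same minimal polynomial supplies more detail for part (iii) than the paper records.
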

\begin{Rem}\label{rem:functorial}
The above described correspondence descends to the appropriate quotients by the natural gauge actions on the objects---Higgs bundles, self-duality solutions, flat bundles, surface group representations---and is functorial.
\end{Rem}
\begin{Rem}\label{rem:S1-action}
The action of $S^1\subset \C$ on a stable strongly parabolic Higgs bundle given by $(W,\Phi)\mapsto (W,\mu\Phi)$ for unimodular $|\mu|=1$ does not change the tame metric $h$, but does change the resulting flat connection and representation.
\end{Rem}

In our setting of solutions to the Tzitz{\'e}ica equation \eqref{eq:Tzit-global}, Blaschke metrics, affine spheres, and Monge--Amp{\`e}re metrics,  the occurring monodromies take values in the real group ${\bf SL}_3(\R)$. Thus, we need to understand which parabolic Higgs bundles correspond to real representations.  This is well-documented for the non-parabolic case \cite{Hi2}, \cite{S-loc}, but less accessible in the parabolic setting. For the parabolic Higgs bundles relevant to this paper, we only need a weaker characterization for which we sketch a proof following \cite{S-loc}.
\begin{Pro}\label{pro:reality}
Let $M$ be a compact Riemann surface, $\frak{D}=p_1+\dots+  p_n$ the singularity divisor, $\mathring{M}=M\setminus \supp\frak{D}$, and $(W,\Phi)$ a stable strongly parabolic Higgs bundle of degree zero with trivial flag and weights. Then the corresponding representation $\rho\colon \pi_1(M,p_0)\to {\bf SL}_r(\C)$ is equivalent  to the complex conjugate representation $\bar{\rho}$,  if and only if there exists an isomorphism of parabolic Higgs bundles $(W,\Phi)\cong (W^*,\Phi^*)$.
\end{Pro}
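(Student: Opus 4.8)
The plan is to identify, at the level of parabolic Higgs bundles, the operation that the correspondence of Theorem~\ref{thm:nAH} assigns to complex conjugation $\rho\mapsto\bar\rho$ of representations, and to check that this operation is exactly $(W,\Phi)\mapsto(W^*,\Phi^*)$, where $\Phi^*=\Phi^t$ is the \emph{transpose} Higgs field. Granting the resulting general fact $\mathrm{NAH}(W^*,\Phi^*)\cong\bar\rho$ (valid for every stable strongly parabolic Higgs bundle of the relevant type, with no further hypothesis), the Proposition is immediate from the bijectivity and functoriality of the correspondence (Theorem~\ref{thm:nAH}, Remark~\ref{rem:functorial}). Indeed, writing $\mathrm{NAH}$ for the correspondence, the chain
$\rho\cong\bar\rho\iff \mathrm{NAH}(W,\Phi)\cong\mathrm{NAH}(W^*,\Phi^*)\iff (W,\Phi)\cong(W^*,\Phi^*)$
covers both directions at once: the backward implication uses that $\mathrm{NAH}$ respects isomorphisms, the forward implication uses its injectivity on isomorphism classes (Theorem~\ref{thm:nAH}(ii)).

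The heart of the argument, following \cite{S-loc}, is to produce the isomorphism $\mathrm{NAH}(W^*,\Phi^*)\cong\bar\rho$ from the harmonic metric. Let $h$ be the tame harmonic metric of $(W,\Phi)$ from Theorem~\ref{thm:nAH}(i), with Chern connection $D=\partial^D+\dbar^D$ and flat connection $d_W=D+\Phi+\Phi^{\dagger_h}$ of monodromy $\rho$ as in \eqref{eq:flat}. First I would observe that $\bar\rho$ is the monodromy of the conjugate flat connection $\overline{d_W}$ on the conjugate bundle $\bar W$; decomposing $\overline{d_W}=\bar D+\bar\Phi+\overline{\Phi^{\dagger_h}}$ with respect to $\bar h$ exhibits the conjugate harmonic bundle $(\bar W,\overline{\partial^D},\overline{\Phi^{\dagger_h}})$, whose $(1,0)$-part $\overline{\Phi^{\dagger_h}}$ is holomorphic for $\overline{\partial^D}$ and whose associated representation is $\bar\rho$. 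Next, $h$ defines a $\C$-linear bundle isomorphism $\sigma_h\colon\bar W\to W^*$, $\sigma_h(\bar v)=h(\,\cdot\,,v)$. Since $D$ is the Chern connection, $h$ is $D$-parallel, which makes $\sigma_h$ parallel for the conjugate connection on $\bar W$ and the dual connection on $W^*$; taking $(0,1)$-parts shows $\sigma_h$ is holomorphic from $(\bar W,\overline{\partial^D})$ to $(W^*,\dbar_{W^*})$. The adjointness $h(\Phi u,v)=h(u,\Phi^{\dagger_h}v)$ defining $\Phi^{\dagger_h}$ then yields the endomorphism identity $\sigma_h\circ\overline{\Phi^{\dagger_h}}=\Phi^t\circ\sigma_h$, so that $\sigma_h$ is an isomorphism of Higgs bundles $(\bar W,\overline{\partial^D},\overline{\Phi^{\dagger_h}})\cong(W^*,\Phi^t)$ over $\mathring M$, giving $\bar\rho\cong\mathrm{NAH}(W^*,\Phi^t)$. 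The sign matters: the plain transpose $+\Phi^t$ corresponds to conjugation, whereas $-\Phi^t$ would give the dual representation $\rho^\vee$, the two differing by the $S^1$-action of Remark~\ref{rem:S1-action}.

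What remains, and where the parabolic setting demands care, is to promote $\sigma_h$ from an isomorphism over $\mathring M$ to one of parabolic Higgs bundles over all of $M$. I would first check that transpose and conjugation preserve the type covered by Theorem~\ref{thm:nAH}: degree zero, trivial flags and zero weights are manifestly preserved, and the residues $\Res_{p_k}\Phi^t$ and $\overline{\Res_{p_k}\Phi^{\dagger_h}}$ have the same maximal nilpotent Jordan type, hence the same minimal polynomial, as $\Res_{p_k}\Phi$; accordingly the comparison metric \eqref{eq:comp-metric} is carried to one of the same form after reordering the line summands. Tameness \eqref{eq:tame-distance} of $\bar h$ and of the dual metric follows because complex conjugation and inversion $H\mapsto H^{-1}$ act as isometries of the symmetric space ${\bf SL}_r(\C)/{\bf SU}_r$, so the distance bound is preserved. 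Thus $(W^*,\Phi^t)$ is again a stable strongly parabolic Higgs bundle of the admissible type, and the uniqueness of the canonical tame parabolic extension should force $\sigma_h$ to extend across the punctures to the holomorphic extensions $\bar W$ and $W^*$, respecting the trivial parabolic structures. I expect this extension step to be the main obstacle: matching the prescribed holomorphic extensions and the weight data \emph{at} the punctures, rather than merely over $\mathring M$, is precisely the point at which the non-parabolic argument of \cite{S-loc} must be supplemented by Simpson's analysis of tame harmonic metrics near the divisor \cite{S}.
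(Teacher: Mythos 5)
Your proposal is correct and takes essentially the same route as the paper's proof: the tame harmonic metric $h$ is used as a bundle isomorphism identifying the conjugate Higgs bundle $(\bar W,\overline{D^{1,0}},\overline{\Phi^{\dagger_h}})$, whose monodromy is $\bar\rho$, with the dual $(W^*,\Phi^*)$ over $\mathring M$, and both directions of the equivalence then follow from the uniqueness statements of Theorem~\ref{thm:nAH}; your sign bookkeeping ($+\Phi^t$ for conjugation, $-\Phi^t$ for the dual representation) matches the paper's formulation $h\colon\overline{(W,\Phi)}\to(W,-\Phi)^*=(W^*,\Phi^*)$.

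The one place you diverge is the final step, which you flag as the expected main obstacle: promoting $\sigma_h$ to an isomorphism of parabolic Higgs bundles over all of $M$. That step is unnecessary, and the paper does not perform it. The dual $(W^*,\Phi^*)$ is a stable strongly parabolic Higgs bundle over $M$ of the admissible type for purely algebraic reasons (dual holomorphic bundle, transposed meromorphic Higgs field with nilpotent residues, trivial flags and weights, degree zero), so it lies in the domain of Theorem~\ref{thm:nAH} on its own; the only analytic input needed is the one you already supplied, namely that $h^*$ is tame and solves the self-duality equations for $(W^*,\Phi^*)$, whence by the uniqueness in Theorem~\ref{thm:nAH}(i) it \emph{is} the tame harmonic metric of $(W^*,\Phi^*)$. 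Consequently the representation that the correspondence attaches to $(W^*,\Phi^*)$ is the monodromy of $D^{h^*}+\Phi^*+(\Phi^*)^{\dagger_{h^*}}$, and identifying this monodromy with $\bar\rho$ only requires $\sigma_h$ as an isomorphism of flat bundles over $\mathring M$, since monodromy is computed entirely on the punctured surface. Note also that the conjugate Higgs bundle itself has no preferred meromorphic extension across the punctures---$\Phi^{\dagger_h}$ is defined through $h$, so an expression like $\Res_{p_k}\overline{\Phi^{\dagger_h}}$ only acquires meaning after transporting everything to $(W^*,\Phi^*)$ via $\sigma_h$. With the claim that $(W^*,\Phi^*)$ corresponds to $\bar\rho$ established this way, injectivity of the correspondence on isomorphism classes (isomorphism meaning over $M$, as in the statement of the Proposition) closes both implications exactly as in your first paragraph, and no matching of data at the punctures ever enters.
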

\begin{proof}
Let $h$ be the unique tame metric for $(W,\Phi)$ over $\mathring{M}$ from Theorem~\ref{thm:nAH} giving rise to the flat connection $d_W=D+\Phi+\Phi^{\dagger_h}$ with monodromy representation  $\rho$. The complex conjugate Higgs bundle $\overline{(W,\Phi)}=(\bar{W},\Phi^{\dagger_h})$ over $\mathring{M}$, with holomorphic structure $D^{1,0}$, gives rise via $h$ to the same flat connection $d_W$. Hence, its monodromy is the complex conjugate representation $\bar{\rho}$. Since $D$ is Hermitian for $h$, we have 
\[
h((D^{1,0}+\Phi^{\dagger_h})\psi,\varphi)+h(\psi, (D^{0,1} \varphi-\Phi)\varphi)=\dbar h(\psi,\varphi)
\]
for sections $\psi,\varphi\in\Gamma(W_{|\mathring{M}})$. Since $D$ is the Chern connection, $D^{0,1}=\dbar_W$ is the holomorphic structure of $W$. Thus, we have the isomorphism of Higgs bundles
\begin{equation}\label{eq:h-iso}
h\colon \overline{(W,\Phi)}\to (W,-\Phi)^*
\end{equation}
over $\mathring{M}$. The dual $(W,-\Phi)^*=(W^*,\Phi^*)$ is a stable strongly parabolic Higgs bundle with tame metric $h^*$ and we have shown that it gives rise to the complex conjugate representation $\bar{\rho}$. The proposition now follows from Theorem~\ref{thm:nAH}, which assigns to a stable strongly parabolic Higgs bundle a unique representation and vice versa. 
\end{proof}
We are now in a position to apply the non-Abelian Hodge correspondence in Theorem \ref{thm:nAH} for stable strongly parabolic Higgs bundles to obtain solutions to the Tzitz{\'e}ica equation \eqref{eq:Tzit-global} for hyperbolic affine spheres modeled on the thrice-punctured sphere $\Sigma=S^2\setminus\{p_1,p_2,p_3\}$. 
\begin{Lem}\label{lem:3}
 Consider the stable strongly parabolic Higgs bundle $(W,\Phi)$ over $S^2$ given in \eqref{eq:Higgsbundle} with singularity divisor $\mathfrak{D}=p_1+p_2+p_3$ and meromorphic cubic differential $Q\in H^0(K^3\mathcal{O}(2\mathfrak{D}))$  having quadratic poles at the punctures $p_k$.  Then we have the following:
\begin{enumerate}[(i)]
\item
There exists a unique tame Hermitian metric $h$ of unit volume on $W_{|\Sigma}$ satisfying the self-duality equations \eqref{eq:self-duality-general}. Moreover, $h=h_{-1}\oplus h_0\oplus h_1$ is  diagonal with respect to $W=\mathcal{O}(-1)\oplus\mathcal{O}\oplus\mathcal{O}(1)$ with $h_{-1}=h_1^{-1}$ and $h_0=dt^2$. The irreducible representation $\rho\colon \pi_1(\Sigma,p_0)\to {\bf SL}_3(\R)$  corresponding to $(W,\Phi)$, described in Theorem~\ref{thm:nAH}, is a real representation. 
\item
The identification $K=K^{-1}_{S^2}=\mathcal{O}(2)=\mathcal{O}(-1)\mathcal{O}(\mathfrak{D})$, given by the (up to scale) unique section with simple zeros at $p_k$, defines a Riemannian metric $g$ on $\Sigma$ satisfying the Tzitz{\'e}ica equation \eqref{eq:Tzit-global} for hyperbolic affine spheres, H=1. The Riemannian metric $g$ is bounded with respect to the hyperbolic cusp metric $g_0$, which is obtained from the parabolic Higgs bundle  \eqref{eq:Higgsbundle} with $Q\equiv 0$.
\end{enumerate}
\end{Lem}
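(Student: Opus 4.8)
The plan is to read off $h$ and all of its structural features from Simpson's correspondence (Theorem~\ref{thm:nAH}), combined with the two symmetries of the cyclic Higgs bundle \eqref{eq:Higgsbundle} and the uniqueness clause of that theorem. Since the bundle $(W,\Phi)$ has already been noted to be stable strongly parabolic of degree zero with trivial flags and weights, Theorem~\ref{thm:nAH}(i) immediately yields a unique tame Hermitian metric $h$ on $W_{|\Sigma}$ with $\det_W h=1$ solving the self-duality equations \eqref{eq:self-duality-general}, together with an irreducible monodromy $\rho\colon\pi_1(\Sigma,p_0)\to{\bf SL}_3(\C)$. To force $h$ diagonal I would exploit the cyclic grading: for a primitive cube root of unity $\zeta$ the constant, hence holomorphic, gauge $u=\diag(\zeta^2,\zeta,1)\in{\bf SL}(W)$ satisfies $u\Phi u^{-1}=\zeta\Phi$, so $u$ is an isomorphism $(W,\Phi)\cong(W,\zeta\Phi)$. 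By Remark~\ref{rem:S1-action} the $S^1$-action $\Phi\mapsto\zeta\Phi$ does not change the tame metric, so the harmonic metric of $(W,\zeta\Phi)$ is again $h$; pulling back along $u$ and invoking uniqueness gives $u^{*}h=h$, and comparing entries forces every off-diagonal component to vanish, whence $h=h_{-1}\oplus h_0\oplus h_1$.

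For the normalizations $h_{-1}=h_1^{-1}$ and $h_0=1$ I would use the self-duality of the bundle under the anti-diagonal pairing $b=\begin{smatrix}0&0&1\\0&1&0\\1&0&0\end{smatrix}$, which pairs $\mathcal{O}(-1)$ with $\mathcal{O}(1)$ via $\mathcal{O}(-1)\mathcal{O}(1)=\mathcal{O}$ and $\mathcal{O}$ with itself. A direct check gives $b\Phi b^{-1}=\Phi^{t}$, so $b\colon(W,\Phi)\to(W^{*},\Phi^{*})$ is an isomorphism of strongly parabolic Higgs bundles. By the naturality of the non-Abelian Hodge correspondence under duals (Remark~\ref{rem:functorial}) the harmonic metric of $(W^{*},\Phi^{*})$ is the dual metric $h^{-1}$, so $b^{*}(h^{-1})$ is harmonic for $(W,\Phi)$ and hence, by uniqueness, equals $h$. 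Since $b$ reverses the order of the summands, $b^{*}(h^{-1})=h_1^{-1}\oplus h_0^{-1}\oplus h_{-1}^{-1}$; comparing with $h$ yields $h_{-1}=h_1^{-1}$ and $h_0=h_0^{-1}$, i.e.\ $h_0=1=dt^2$. The same isomorphism shows $(W,\Phi)\cong(W^{*},\Phi^{*})$, so Proposition~\ref{pro:reality} gives $\rho\cong\bar\rho$; as $\rho$ is irreducible and of odd dimension $3$, the antilinear intertwiner supplied by $\rho\cong\bar\rho$ squares (after rescaling, by Schur) to $+1$ rather than $-1$, a quaternionic structure being impossible in odd dimension, so $\rho$ defines a real structure and is conjugate into ${\bf SL}_3(\R)$.

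For part (ii) I would transport $h$ through the identification of the third summand with the canonical bundle. The section of $\mathcal{O}(\mathfrak{D})$ with simple zeros at the $p_k$ trivializes $\mathcal{O}(\mathfrak{D})$ over $\Sigma$ and identifies $\mathcal{O}(1)|_{\Sigma}=K_{S^2}\mathcal{O}(\mathfrak{D})|_{\Sigma}\cong K_\Sigma$; under it $h_1$ becomes $g^{-1}$ for a conformal Riemannian metric $g=h_{-1}$ on $\Sigma$. By the previous paragraph $h=g\oplus dt^2\oplus g^{-1}$ is exactly of the shape required in Lemma~\ref{lem:DP}, and $Q=\det_W\Phi$, so the equivalence (iii)$\Leftrightarrow$(ii) of that lemma converts the self-duality equations satisfied by $h$ into the Tzitz\'eica equation \eqref{eq:Tzit-global} with $H=1$ for $g$ and $Q$. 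The boundedness against the cusp metric $g_0$ I would extract from tameness: the residues \eqref{eq:S2ResHiggsfield} involve only the section ${\bf 1}$ and are therefore independent of $Q$, so the comparison metric \eqref{eq:comp-metric} is the same for every $Q$, in particular for $Q\equiv0$, whose harmonic metric is by definition $g_0$. Tameness \eqref{eq:tame-distance} places both $h$ and the $Q\equiv0$ metric at bounded distance from this common comparison metric in ${\bf SL}_3(\C)/{\bf SU}_3$, and the triangle inequality then makes $g$ and $g_0$ uniformly comparable near the punctures.

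I expect the main obstacle to be pinning down the precise diagonal normalization, that is the pair $h_{-1}=h_1^{-1}$ and $h_0=1$, rather than mere existence: this rests on feeding the two symmetries $u$ and $b$ into the uniqueness statement of Theorem~\ref{thm:nAH} and on the duality-compatibility of the correspondence, which must be invoked carefully in the parabolic setting. By comparison the reality of $\rho$ is a formal consequence of Proposition~\ref{pro:reality} together with the odd-dimensional Frobenius--Schur dichotomy, the translation into \eqref{eq:Tzit-global} is a direct application of Lemma~\ref{lem:DP}, and the asymptotic comparison is essentially a restatement of tameness.
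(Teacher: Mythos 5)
Your proposal is correct, and most of it follows the paper's own proof: existence, uniqueness, and diagonality of $h$ via Theorem~\ref{thm:nAH} plus the cyclic symmetry (the paper conjugates by $c=\diag(\zeta,1,\zeta^2)$, you by $\diag(\zeta^2,\zeta,1)$ --- the same standard cyclic-Higgs-bundle argument), the normalization $h_{-1}=h_1^{-1}$, $h_0=dt^2$ via the symmetric pairing $b$ swapping $\mathcal{O}(-1)$ and $\mathcal{O}(1)$ combined with uniqueness of the tame metric, the derivation of the Tzitz\'eica equation by gauging into the setting of Lemma~\ref{lem:DP}, and the boundedness of $g$ against $g_0$ via tameness relative to the common, $Q$-independent comparison metric \eqref{eq:comp-metric}. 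The one genuinely different step is the reality of $\rho$. Having obtained $(W,\Phi)\cong(W^*,\Phi^*)$ and hence $\rho\sim\bar{\rho}$ from Proposition~\ref{pro:reality}, the paper forward-references Theorem~\ref{thm:charvar}(iii): the characters of $\rho$ are then real, and the explicit inverse of the character map, \eqref{eq:A2-nf11} and \eqref{eq:A2-nf12}, shows that a completely reducible representation with real characters is conjugate into ${\bf SL}_3(\R)$. You instead invoke the Frobenius--Schur dichotomy: the antilinear intertwiner furnished by $\rho\sim\bar{\rho}$ squares, after rescaling (Schur's lemma applies because $\rho$ is irreducible by Theorem~\ref{thm:nAH}), to $\pm 1$, and the quaternionic case $-1$ is impossible in odd complex dimension, so $\rho$ preserves a $3$-dimensional real form and is conjugate into ${\bf SL}_3(\R)$. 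This is valid and arguably cleaner as a standalone argument: it is self-contained, avoids the forward dependence on the character-variety computation, and works for any odd rank. What the paper's route buys is economy within the larger argument --- Theorem~\ref{thm:charvar} is needed later regardless, and its proof simultaneously locates $[\rho]$ in a specific connected component of $\mathcal{M}_B^{ps}(\R)$, which the remainder of the paper uses.

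One small point to tighten in part (ii): you treat ``the harmonic metric for $Q\equiv 0$ is $g_0$'' as a matter of definition. That this metric is the \emph{hyperbolic} cusp metric is an assertion requiring a (one-line) proof: one checks that $u\equiv 0$ solves \eqref{eq:Tzit-global} with $Q\equiv 0$, $H=1$, that the resulting metric is tame (its asymptotics $1/(r^2|\log r|^2)$ agree with the gauged comparison metric), and then appeals to uniqueness in Theorem~\ref{thm:nAH}; this is exactly the line the paper supplies. With that inserted, your triangle-inequality comparison yields the paper's conclusion verbatim.
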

\begin{Rem}\label{rem:ray}
Since $H^0(K^3\mathcal{O}(2\mathfrak{D}))=\C$, the previous lemma constructs a $\C$-family of solutions to the Tzitz{\'e}ica equation \eqref{eq:Tzit-global} on the thrice-punctured sphere. Scaling the cubic differential $Q\mapsto \mu Q$ by a unimodular complex number $|\mu|=1$ does not change the solution $g$ of the Tzitz{\'e}ica equation. Moreover, the maximum principle applied  to the Tzitz{\'e}ica equation \eqref{eq:Tzit-global}  shows \cite[Proposition 4.1]{OT} that two solutions are isometric if and only if their cubic differentials relate via a unimodular scale. Therefore, we obtain  a ray $[0,\infty)$-worth of non-isometric Blaschke metrics $g$ on the thrice-punctured sphere. The asymptotics of the solutions at the punctures are the same as for the solutions constructed by Loftin \cite{L}. 
\end{Rem}
\begin{proof}
The existence and uniqueness of a tame metric $h$ with $\det_W h=1$ to the given stable strongly parabolic Higgs bundle $(W,\Phi)$ satisfying the self-duality equations \eqref{eq:self-duality-general} follows from Theorem~\ref{thm:nAH}. Applying a standard argument \cite{Hi}, \cite{Ba}, \cite{ColLi} for cyclic Higgs bundles, namely that the Higgs field $\Phi$ scales by $\zeta=e^{2\pi i/3}$  under  conjugation  by $c=\diag(\zeta, 1,\zeta^2)$ and therefore the unique tame metric $h$ and the Chern connection $D$  satisfy $c\cdot h=h$ and $c\cdot D=D$, shows that $h$ and $D$ are diagonal with respect to $W=\mathcal{O}(-1)\oplus\mathcal{O}\oplus\mathcal{O}(1)$. Furthermore, the Higgs field $\Phi$ is symmetric and $c$ is orthogonal with respect to the non-degenerate,  symmetric bilinear pairing $b\colon W\to W^*$, $b=b^*$, given by permuting $\mathcal{O}(-1)$ with $\mathcal{O}(1)$. 
This implies that $bh^{-1}=hb$ and,  since $\det W=\mathcal{O}$, the diagonal metric $h$ has the required form $h=h_{1}^{-1}\oplus dt^2 \oplus h_{1}$.  Since  $\Phi=\Phi^{t_b}$, the isomorphism $b$ is an isomorphism of Higgs bundles $(W,\Phi)\cong (W^*,\Phi^*)$ and Proposition~\ref{pro:reality}
implies that the representation $\rho$ corresponding to $(W,\Phi)$ is conjugate to $\bar{\rho}$. As we shall prove in Theorem~\ref{thm:charvar} (iii), this implies---in our special setting---that $\rho$ takes values in ${\bf SL}_3(\R)$.

To recover a solution of the Tzitz{\'e}ica equation \eqref{eq:Tzit-global} from the parabolic Higgs bundle $(W,\Phi)$, we use the isomorphism $K^{-1}=\mathcal{O}(-1)\mathcal{O}(\mathfrak{D})$. This  defines a bundle isomorphism $a\colon \mathcal{O}(-1)\to K^{-1}$ over $\Sigma$ given by multiplication with the unique (up to scale)  section with simple zeros at $\mathfrak{D}$. Gauging the Higgs bundle $(W,\Phi)$ by the diagonal gauge $a\oplus 1\oplus a^{-1}$ yields the Higgs bundle  $K^{-1}\oplus \underline{\C}\oplus K$ over $\Sigma$ with Higgs field given by \eqref{eq:Higgsfield}. We equip 
the bundle $K^{-1}\oplus \underline{\C}\oplus K$ over $\Sigma$ with the diagonal metric $g\oplus dt^2 \oplus g^{-1}$ obtained from $h$ by the  gauge $a\oplus 1\oplus a^{-1}$. Then the self-duality equations \eqref{eq:self-duality} are satisfied for the gauged Higgs bundle and by Lemma~\ref{lem:DP} the metric $g$ and cubic differential $Q$ solve the Tzitz{\'e}ica equation \eqref{eq:Tzit-global} on the punctured 2-sphere $\Sigma$. In other words, $g$ is the Blaschke metric and $Q$ the cubic form of an equivariant hyperbolic affine sphere. The asymptotics at the punctures of the metric $g$ is determined by the asymptotics of the tame Hermitian metric $h$ modified by the gauge $a\oplus 1\oplus a^{-1}$, that is $g=h_{1}^{-1}/|a|^2$. The same holds for the comparison metric $\tilde{h}$ on $\mathcal{O}(-1)$ which, after applying the gauge,  becomes the hyperbolic cusp metric $1/{r^2 |\log r|^2}$ near the punctures. For $Q\equiv 0$, the metric $g$ obtained from the above construction is the hyperbolic metric $g=g_0$ due to the fact that it solves the  Tzitz{\'e}ica equation \eqref{eq:Tzit-global}  for $u\equiv 0$. By tameness, $h$ is in bounded distance to the comparison metric $\tilde{h}$, whence the Blaschke metric $g$  is in bounded distance from the hyperbolic cusp metric $g_0$. We note that the simply connected hyperbolic affine sphere corresponding to $Q\equiv 0$ is one sheet of a  hyperboloid in $\R^3$.  The latter is a special case of the more general result by \cite{Berwald} that affine immersions with trivial Pick differential are quadrics. 
\end{proof}

At this juncture, we have constructed a $[0,\infty)$-family of mutually non-isometric solutions of the Tzitz{\'e}ica equation \eqref{eq:Tzit-global} on a thrice-punctured $2$-sphere $\Sigma=S^2\setminus\{p_1,p_2,p_3\}$. This family of solutions is parametrized by meromorphic cubic  differentials $Q\in H^0(S^2, K^3\mathcal{O}(2\mathfrak{D}))$, up to unimodular scalings, with quadratic poles along $\mathfrak{D}=p_1+p_2+p_3$. Lemma~\ref{lem:monodromy} then constructs a corresponding $\C$-family of flat special affine 3-manifolds 
equipped with Monge--Amp{\`e}re metrics, modeled on $B=\Sigma\times (0,1)$, a $3$-ball $S^2\times [0,1)$ from which the Y-vertex $\{p_k\}\times [0,1)$, $k=1,2,3$,  has been removed.
Since the monodromies $\rho\colon \pi_1(B)\to {\bf SL}_3(\R)$ of the flat special affine structures on $B$ are the monodromies of the non-congruent hyperbolic affine sphere immersions $f\colon \tilde{\Sigma}\to \R^3$ determined by the Tzitz{\'e}ica solutions \eqref{eq:Tzit-global} together with the cubic differential $Q$, we obtain a $\C$-family of mutually non-congruent flat special affine 3-manifold structures on $B$. 
In particular, the Monge--Amp{\`e}re metrics on $B$ are mutually non-isometric. Lemma~\ref{lem:3} describes the asymptotics of the  Tzitz{\'e}ica solutions at the punctures and thus the asymptotics of the family of Monge--Amp{\`e}re metrics at the Y-vertex. 

A flat special affine manifold $(B,\nabla,\det_B)$ with a Monge--Amp{\`e}re metric $g_B=\nabla d\phi$, for a strictly convex potential $\phi\colon B\to \R$, equips the tangent bundle $X=TB$ with the structure of a Calabi--Yau manifold  fibered over $B$ via the tangent bundle projection $\pi\colon X\to B$ with special Lagrangian flat fibers $\pi^{-1}(b)=T_b B$: the flat connection $\nabla$ on $B$ provides an integrable horizontal subbundle $H\subset T(TB)$ isomorphic to $\pi^*TB$  and thus $TX=\pi^*TB\oplus \pi^* TB$, with the second factor isomorphic to the fiber tangents $\ker d\pi\subset T(TB)$.  The almost complex structure 
\[
J\colon TX\to TX\,,\qquad J=\begin{pmatrix} 0&-1\\1&\;\;\,0\end{pmatrix}
\]
on $X$ is integrable since the horizontal subbundle  $H\subset T(TB)$ is integrable. Together with the diagonal metric $g=\pi^*g_B\oplus \pi^*g_B$
this makes $(X,g,J)$ into a K\"ahler manifold whose K\"ahler form vanishes on each of the summands $\pi^*TB$. In particular, the fibers of $\pi\colon X\to B$ are Lagrangian. The Monge--Amp{\`e}re condition ${\det}_B \nabla d\phi=1$ is equivalent \cite{Hi3} to Ricci-flatness of $g$ and thus $(X,g,J)$ is Calabi--Yau. The real part of the parallel holomorphic $n$-form $\Omega=\pi^*\det_B\oplus\, i J^*\pi^*\det_B$  vanishes along the fibers showing that the fibers are special Lagrangian. The Riemannian metric $g$ depends only on the base $B$ of the fibration $\pi\colon X\to B$ and hence the fibers are flat---justifying the terminology {\em semi-flat} for such fibrations.  

In order for the  metric $g$ on $X=TB$ to descend to a  Calabi--Yau metric on the total space of a special Lagrangian torus fibration $X/\Lambda\to B$, the full rank lattice bundle $\Lambda\subset TB$ has to be $\nabla$-parallel.  The existence of such a lattice bundle  is equivalent to the integrality of the monodromy representation $\rho\colon \pi_1(B)\to {\bf SL}_n(\Z)$ of the flat connection $\nabla$ via  $\Gamma=\tilde{B}\times_{\rho} \Z^n$. 

We summarize those observations in the following result, which is an elaboration---using non-Abelian Hodge theoretic methods---of a suggestion made in \cite{LYZerr}:
\begin{The}\label{lem:SL-fiber}
Let $\Sigma=S^2\setminus\{p_1,p_2,p_3\}$ be the thrice-punctured 2-sphere, $B=\Sigma\times (0,1)$ an open  3-ball from which the  Y-vertex $\{p_k\}\times [0,1)$ has been removed, and $X=TB$ the tangent bundle of $B$. 

Then there exists a $\C$-family, parametrized by meromorphic cubic differentials $Q\in H^0(K\mathcal{O}(2\mathfrak{D}))$ on $S^2$ with quadratic poles along $\mathfrak{D}=p_1+p_2+p_3$, of mutually non-isometric semi-flat 
Calabi--Yau metrics on $X$ whose fibers $\pi\colon X\to B$ are special Lagrangian 3-planes. 
\end{The}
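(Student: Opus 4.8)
The plan is to read this statement as the terminus of the chain of constructions built up in the preceding lemmas: each cubic differential $Q$ yields a Tzitz\'eica solution on $\Sigma$, which cones to a Monge--Amp\`ere metric on $B$, which in turn determines a semi-flat Calabi--Yau metric on $X=TB$. Almost every ingredient is already in place, so the proof is largely an assembly; the one point requiring a genuine argument is promoting the non-isometry of the base metrics to non-isometry of the total-space metrics.

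First I would fix $Q\in H^0(K^3\mathcal{O}(2\mathfrak{D}))$ and invoke Lemma~\ref{lem:3}: it produces the unique tame harmonic metric $h$ on the Higgs bundle \eqref{eq:Higgsbundle}, hence the Blaschke metric $g$ on $\Sigma$ solving the Tzitz\'eica equation \eqref{eq:Tzit-global} in the hyperbolic case $H=1$, together with a monodromy $\rho$ taking values in ${\bf SL}_3(\R)$. By Lemma~\ref{lem:DP} the pair $(g,Q)$ is the Blaschke metric and Pick differential of a hyperbolic affine sphere immersion $f\colon\tilde{\Sigma}\to\R^3$ equivariant under $\rho$. Feeding $M=\Sigma$, $n=2$, $H=1$ (so $S=I$) into the coning Lemma~\ref{lem:monodromy} then yields the torsion-free flat special affine structure $(\tilde{\nabla},\det_B)$ on $B=\Sigma\times(0,1)$, the explicit convex potential $\phi$, and the Monge--Amp\`ere metric $g_B=\tilde{\nabla}d\phi$ of \eqref{eq:MAmetric} satisfying $\det_B\tilde{\nabla}d\phi=1$. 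Topologically $\Sigma\times(0,1)$ is the open $3$-ball $S^2\times[0,1)$ with the three rays $\{p_k\}\times[0,1)$ forming the Y-vertex removed, as claimed.

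Next I would apply the semi-flat construction spelled out in the paragraph immediately preceding the theorem to the data $(B,\tilde{\nabla},\det_B,g_B)$: the flat connection supplies an integrable horizontal distribution, the induced almost complex structure $J$ is integrable, the diagonal metric $G=\pi^*g_B\oplus\pi^*g_B$ on $X=TB$ is K\"ahler, the Monge--Amp\`ere condition is equivalent to Ricci-flatness \cite{Hi3}, and $\Re\Omega$ vanishes along the fibers $\pi^{-1}(b)=T_bB\cong\R^3$, exhibiting them as special Lagrangian $3$-planes. Letting $Q$ range over $H^0(K^3\mathcal{O}(2\mathfrak{D}))\cong\C$ produces the asserted $\C$-family.

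Finally, for mutual non-isometry I would reduce to Remark~\ref{rem:ray} by showing that the total-space metric remembers the base metric. By Remark~\ref{rem:ray} two cubic differentials differing by more than a unimodular scale give non-isometric Blaschke metrics $g$, and since $g_B$ is manufactured from $g$ through the explicit formula \eqref{eq:MAmetric}, the resulting base Monge--Amp\`ere metrics are non-isometric. To lift this to $X$, I would use that fiberwise negation $v\mapsto -v$ is an anti-holomorphic isometric involution of $(X,G,J)$ whose fixed-point set is the zero section $B\hookrightarrow TB$, on which the induced metric is exactly $g_B$; thus $(B,g_B)$ is recovered, up to isometry, from $(X,G)$. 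The hard part will be making this recovery canonical: a Riemannian isometry between two such total spaces need not be holomorphic, so it need not a priori carry the distinguished zero section, or the special Lagrangian fibration, of one to those of the other. I expect to close this gap either by characterizing the fibers intrinsically as the only flat, totally geodesic $3$-dimensional submanifolds, so that an isometry descends to the leaf space $B$ and restricts to an isometry of $g_B$, or by matching curvature invariants of the Ricci-flat metric $G$ along the base; the behavior near the deleted Y-vertex will need particular care in either approach.
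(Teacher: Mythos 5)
Your assembly (steps 1--4) is exactly the paper's route: Lemma~\ref{lem:3} for the Tzitz\'eica solution and real monodromy, Lemma~\ref{lem:DP} for the equivariant affine sphere, Lemma~\ref{lem:monodromy} for the coning to a Monge--Amp\`ere structure on $B$, and the semi-flat K\"ahler construction in the paragraph preceding the theorem. The divergence, and the genuine gap, is in your non-isometry argument.

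Your strategy is to recover the base Riemannian metric $(B,g_B)$ from $(X,G)$ and then quote Remark~\ref{rem:ray}. This cannot prove the stated claim, because $g_B$ does not separate points of the $\C$-family: by Remark~\ref{rem:ray}, $Q$ and $\mu Q$ with $|\mu|=1$ produce the \emph{same} Blaschke metric $g$, and formula \eqref{eq:MAmetric} shows that $\tilde\nabla d\phi$ is built solely from $g$ and the coordinate $r$, so the base Monge--Amp\`ere Riemannian metrics for $Q$ and $\mu Q$ are literally equal. Hence, even granting your "hard part" (canonical recovery of the base), your invariant only distinguishes the ray $[0,\infty)\ni|Q|$, not the $\C$-family. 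What does change under unimodular scaling is the flat special affine structure: the connection $\nabla=\nabla^g+g^{-1}\circ C$ depends on $C=Q+\bar Q$, it enters the total-space metric $G=\pi^*g_B\oplus\pi^*g_B$ through the horizontal distribution, and by Remark~\ref{rem:S1-action} the flat connection and its monodromy representation change under $Q\mapsto\mu Q$ even though the harmonic metric (hence $g$) does not. This is precisely the invariant the paper uses: the monodromies of the flat special affine structures on $B$ are the monodromies of the mutually non-congruent hyperbolic affine spheres determined by $(g,Q)$, so the semi-flat structures are pairwise non-congruent (the pairwise non-conjugacy of the representations $\rho^Q$ is what is later made precise by the injectivity of $Q\mapsto[\rho^Q]$ in Theorem~\ref{thm:last}). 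To repair your argument you must recover from the semi-flat Calabi--Yau structure not the base metric but the flat affine connection, equivalently its monodromy class in the character variety; the base Riemannian metric is provably insufficient.

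Your second, admitted gap---that a Riemannian isometry of $(X,G)$ need not carry the fibration or zero section of one structure to the other---is real but is not what the paper attempts to close either: the paper phrases non-isometry at the level of the semi-flat data (fibration plus flat special affine structure and its monodromy) rather than attempting an intrinsic Riemannian characterization of the fibers. So both halves of your final step should be replaced: use the monodromy representation, not curvature invariants or totally geodesic submanifolds, as the distinguishing datum.
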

The main contribution of this paper is to show that for certain choices of meromorphic cubic  differentials $Q$, these semi-flat 
Calabi--Yau metrics metrics descend to a special Lagrangian torus fibration $X/\Lambda\to B$ given by a parallel lattice subbundle $\Lambda\subset X$.
\begin{The}\label{thm:main}
Among the $\C$-family constructed in Theorem~\ref{lem:SL-fiber}, there exists an infinite series of non-isometric semi-flat 
Calabi--Yau metrics on $X=TB$ invariant under a parallel lattice bundle $\Lambda\subset TB$. In particular, there exists an infinite series of non-isometric  semi-flat
Calabi--Yau metrics on $\pi\colon X/\Lambda\to B$ fibered by special Lagrangian $3$-tori.
\end{The}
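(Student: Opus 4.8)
The crux is the integrality of the monodromy. By Lemma~\ref{lem:monodromy} together with Theorem~\ref{lem:SL-fiber}, the semi-flat metric on $X=TB$ is invariant under a parallel full-rank lattice bundle $\Lambda\subset TB$ — equivalently, descends to a special Lagrangian $3$-torus fibration $X/\Lambda\to B$ — exactly when the monodromy $\rho\colon \pi_1(B)\cong\pi_1(\Sigma)\to\mathbf{SL}_3(\R)$ of the flat connection $d_V$ is integral, i.e.\ conjugate into $\mathbf{SL}_3(\Z)$; one then sets $\Lambda=\tilde B\times_\rho\Z^3$. The plan is therefore to produce infinitely many cubic differentials $Q\in H^0(K^3\mathcal{O}(2\mathfrak{D}))=\C$, pairwise inequivalent under unimodular scaling, whose associated monodromy is integral.

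First I would locate these monodromies inside the relative character variety. Since $\pi_1(\Sigma)$ is free on generators $a,b$ with the third puncture loop $c=(ab)^{-1}$, a representation is determined by $A=\rho(a)$ and $B=\rho(b)$; by Theorem~\ref{thm:nAH}(iii) and the maximal nilpotency of the residues \eqref{eq:S2ResHiggsfield}, the matrices $A,B,(AB)^{-1}$ are regular unipotent, forcing $\Tr A=\Tr A^{-1}=\Tr B=\Tr B^{-1}=\Tr AB=\Tr(AB)^{-1}=3$. Imposing these six constraints on Lawton's presentation \cite{Law} of the $\mathbf{SL}_3(\C)$ character ring of the rank-two free group collapses $\mathcal{M}_B^{ps}$ onto the cubic hypersurface $\mathcal{F}\subset\C^3$ of Theorem~\ref{thm:charvar}, with coordinates the surviving independent traces $x=\Tr AB^{-1}$, $y=\Tr A^{-1}B$, $z=\Tr[A,B]$. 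By Theorem~\ref{thm:charvar} the Hitchin component $\mathcal{C}_2$ is one of the two components of the real locus $\mathcal{F}(\R)$, and the non-Abelian Hodge correspondence of Lemma~\ref{lem:3} restricts to a homeomorphism of the $Q$-family onto $\mathcal{C}_2$.

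The integrality question then becomes Diophantine. Any $\rho$ conjugate into $\mathbf{SL}_3(\Z)$ has all word-traces in $\Z$, so its character $\mathcal{X}_\rho=(x,y,z)$ lies in $\mathcal{F}(\Z)=\mathcal{F}\cap\Z^3$; this is the necessary condition. For the specific solutions we construct I would not argue a general converse but instead exhibit the monodromy matrices explicitly via the birational parametrization underlying Theorem~\ref{thm:charvar}: normalizing $A$ to a fixed regular unipotent and writing $B$ in a normal form adapted to its unipotent structure makes the entries polynomial in $x,y,z$ with controlled denominators, so that an integer point of $\mathcal{F}$ in $\mathcal{C}_2$ produces matrices that, after clearing denominators against a common integral lattice, visibly lie in $\mathbf{SL}_3(\Z)$. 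The main obstacle is the remaining existence statement: the defining cubic of $\mathcal{F}$ is a genuine cubic Diophantine equation, and one must exhibit infinitely many of its integer solutions falling in the Hitchin region $\mathcal{C}_2$ rather than in $\mathcal{C}_1$. I would reduce this to a one-parameter subfamily — a suitable line or conic in $\mathcal{F}(\R)$ meeting $\mathcal{C}_2$ — along which the cubic becomes a Pell- or Markov-type relation carrying an infinite orbit of integer solutions generated by a recursion (the argument attributed to Alekseyev \cite{Al}); the delicate points are verifying that these solutions stay in $\mathcal{C}_2$, and that the parameter $z$ is unbounded so the solutions are genuinely distinct.

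Finally I would assemble the conclusion. Each such integer point corresponds, under the homeomorphism above, to a cubic differential $Q$ with integral monodromy $\rho$; setting $\Lambda=\tilde B\times_\rho\Z^3$ makes the associated semi-flat Calabi--Yau metric on $X=TB$ invariant under $\Lambda$ and hence descend to $X/\Lambda\to B$ with special Lagrangian $3$-torus fibers. For non-isometry, recall from Remark~\ref{rem:ray} that the isometry class of the metric depends only on the unimodular orbit of $Q$; the image of such an orbit in $\mathcal{F}(\R)$ is the continuous image of $S^1$, hence compact, and so meets the discrete set $\mathcal{F}(\Z)$ in only finitely many points. Thus the infinitely many integer points realize infinitely many distinct orbits, i.e.\ infinitely many distinct isometry classes, yielding the claimed infinite series of non-isometric semi-flat Calabi--Yau metrics fibered by special Lagrangian $3$-tori.
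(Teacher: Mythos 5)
Your overall architecture coincides with the paper's: reduce to integrality of the monodromy via $\Lambda=\tilde B\times_\rho\Z^3$; realize the relative character variety as Lawton's cubic $\mathcal{F}\subset\C^3$ with two real components (Theorem~\ref{thm:charvar}); produce infinitely many integral representations in $\mathcal{C}_2$ by combining the explicit normal-form inverse of the character map (clearing denominators by a conjugation) with an infinite family of integer points generated by Alekseyev's recursion --- this is exactly the paper's Theorem~\ref{thm:Diophantine}; then pull these back to cubic differentials $Q$ and invoke Remark~\ref{rem:ray}. Your closing compactness argument for non-isometry (each unimodular orbit $\{\mu Q:|\mu|=1\}$ has compact image in $\mathcal{F}(\R)$, hence meets the discrete set $\mathcal{F}(\Z)$ in finitely many points) is a clean way of making explicit a step the paper leaves implicit.

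There is, however, one genuine gap. Everything rests on your assertion that ``the non-Abelian Hodge correspondence of Lemma~\ref{lem:3} restricts to a homeomorphism of the $Q$-family onto $\mathcal{C}_2$,'' and in your last paragraph you use precisely the surjectivity of this map: every integer point of $\mathcal{C}_2$ is claimed to come from some $Q$. Lemma~\ref{lem:3} does not give this; it only provides the map in one direction, assigning to each $Q$ a tame harmonic metric and a real representation $[\rho^Q]$, and by itself it does not even place $[\rho^Q]$ in $\mathcal{C}_2$ rather than $\mathcal{C}_1$ (that requires continuity in $Q$ plus the identification of $Q\equiv 0$ with uniformization). Without surjectivity, the integral representations you construct in $\mathcal{C}_2$ need not arise from any Higgs bundle of the form \eqref{eq:Higgsbundle}, hence from no metric in the $\C$-family of Theorem~\ref{lem:SL-fiber}, and the proof collapses. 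Closing this gap is the content of the paper's entire section on the Dolbeault moduli space, culminating in Theorem~\ref{thm:last}: by Theorem~\ref{thm:nAH}(ii)--(iii) an integral (hence real and irreducible) representation with regular unipotent local monodromies comes from a unique stable strongly parabolic Higgs bundle with maximally nilpotent residues; one must then classify \emph{all} such Higgs bundles on $S^2$ (Lemmas~\ref{lem:bundletype}, \ref{lem:cyclicHiggs}, \ref{lem:trivialHiggs}: the underlying bundle is $\mathcal{O}(-1)\oplus\mathcal{O}\oplus\mathcal{O}(1)$ or $\mathcal{O}^{\oplus 3}$, with explicit normal forms for $\Phi$), determine which of them yield real representations (Lemma~\ref{lem:realDol}: only the $\Phi^Q$-family and one $\mathcal{O}^{\oplus 3}$-family with $y=-1/2$), and finally match these two real families with the components $\mathcal{C}_2$ and $\mathcal{C}_1$ by continuity and invariance of domain. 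None of this classification is present, or even flagged as necessary, in your proposal.
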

\begin{Rem}
The degeneration of the Calabi--Yau metric on $X/\Lambda$ along the Y-vertex is determined by the asymptotics of the solution to the Tzitz{\'e}ica equation \eqref{eq:Tzit-global}. It would be interesting to compare our analytically based approach to \cite[Remark 1.2]{YL} and also to algebro-geometric constructions \cite{Gross}, \cite{CJL}, where one considers simple pole singularities of the holomorphic volume form $\Omega$ on $X/\Lambda$ along divisors.
\end{Rem}

The remainder of the paper is concerned with the proof of Theorem~\ref{thm:main}. We again make use of the non-Abelian Hodge correspondence, albeit this time employing the relationship between parabolic Higgs bundles and surface group representations. Since our parabolic Higgs bundle is non-generic, we cannot directly apply known results about this correspondence.  
However, we shall develop some of the necessary properties---which are of interest on their own---specifically for our situation. 

The basic idea of the proof is to show that the map from Higgs bundles of the form \eqref{eq:Higgsbundle} into the Hitchin component of the ${\bf SL}_3(\R)$ representation variety of the thrice-punctured 2-sphere $\Sigma$ with 
unipotent monodromies is surjective---in fact, a homeomorphism. We  utilize an explicit algebraic description of this real variety to find integral representations.  Essential to this approach is Simpson's \cite{S}  bijection between the Betti moduli space of irreducible ${\bf SL}_r(\C)$ surface group representations of punctured Riemann surfaces with prescribed conjugacy classes at the punctures and the Dolbeault moduli space of rank $r$ stable parabolic Higgs bundles with the corresponding prescribed parabolic weight filtrations and residues of the Higgs fields at the punctures.

\section{The Betti moduli space}
We choose a base point $p_0\in\Sigma$ in the thrice-punctured sphere $\Sigma=S^2\setminus\{p_1,p_2,p_3\}$ and representative generators $\gamma_k$, $k=1,2,3$,  of the fundamental group $\pi_1(\Sigma, p_0)$ based at $p_0$.  It follows from \cite{S} that parabolic Higgs bundles of the form \eqref{eq:Higgsbundle}  give rise to representations $\rho\colon \pi_1(\Sigma, p_0)\to {\bf SL}_3(\C)$ for which the $\rho(\gamma_k)$ are  unipotent with minimal polynomial $\chi(\lambda)=(\lambda-1)^3$. This reflects the trivial parabolic weight filtrations of the bundles and the nilpotency of the residues of the Higgs fields \eqref{eq:S2ResHiggsfield}. 
\begin{Def}\label{def:Betti}
The Betti moduli space in our set up is given by ${\bf SL}_3(\C)$ conjugacy classes of representations $\rho\colon  \pi_1(\Sigma, p_0)\to {\bf SL}_3(\C)$, whose generators $\rho(\gamma_k)$, $k=1,2,3$, are unipotent:
\[
\mathcal{M}_{B}=\{\rho\colon  \pi_1(\Sigma,p_0)\to {\bf SL}_3(\C)\,;\, \rho(\gamma_k)\,\text{ unipotent}\}/{\bf SL}_3(\C).
\]
We denote by $\mathcal{M}_{B}^{s}\subset \mathcal{M}_{B}^{ps}\subset \mathcal{M}_{B}$ the subspaces of conjugacy classes of irreducible and completely reducible, that is, sums of irreducible, representations.
\end{Def}
The fundamental group relation of the thrice-punctured sphere implies
\begin{equation}\label{eq:FGrelation}
\rho(\gamma_3)=(\rho(\gamma_2)\rho(\gamma_1))^{-1}
\end{equation}
for the generators of $[\rho]\in \mathcal{M}_{B}$. Therefore, we have the description of the Betti moduli space 
\begin{equation}\label{eq:pairs}
\mathcal{M}_{B}=\{(A_1,A_2)\,;\, A_1, A_2, (A_2 A_1)^{-1}\,\text{unipotent}\}/{\bf SL}_3(\C)
\end{equation}
as  pairs $(A_1,A_2)$ of ${\bf SL}_3(\C)$ matrices subject to the unipotency condition. This description equips $\mathcal{M}_{B}$ with the quotient topology, which is Hausdorff on the subspace $\mathcal{M}_{B}^{ps}$ of completely reducible representations. The subspace of irreducible representations $\mathcal{M}_{B}^{s}$ is a smooth complex manifold via this quotient structure.  

An important aspect of our approach is an explicit description of  the completely reducible part $\mathcal{M}_{B}^{ps}$ of the Betti moduli space as an affine surface $\mathcal{F}\subset \C^3$. Consider the following character map $\mathcal{X}\colon \mathcal{M}_{B}\to \C^3$ defined in Lawton \cite{Law}, \cite{Law07} given by
\begin{equation}\label{eq:characters}
\mathcal{X}_{\rho}=
\begin{pmatrix}
\tr (\rho(\gamma_1)\rho(\gamma_2)^{-1})\\
\tr (\rho(\gamma_1)^{-1}\rho(\gamma_2))\\
\tr (\rho(\gamma_1)\rho(\gamma_2)\rho(\gamma_1)^{-1}\rho(\gamma_2)^{-1})
\end{pmatrix}
\end{equation}
and let  $P\in\Z[x,y,z]$ be Lawton's polynomial, the cubic polynomial with integer coefficients
\begin{equation}\label{eq:polynom}
P=414-108 x+x^3-108 y+21 x y+y^3-(51-9x-9 y+x y)z+z^2\,.
\end{equation}
We then have the following result characterizing the Betti moduli space:
\begin{The}\label{thm:charvar}
\begin{enumerate}[(i)]
\item
The cubic affine surface $\mathcal{F}=P^{-1}(0)\subset \C^3$ is smooth away from the point $(3,3,3)\in \mathcal{F}$ and has the birational representation
\[
\Psi\colon\C^2\to \mathcal{F}\,,\quad \Psi(s,t)=\left(3+\tfrac{(3+3s+t)^2}{st-s^3-1},3+s\tfrac{(3+3 s+t)^2}{st-s^3-1},3+t\tfrac{(3+3 s+t)^2}{st-s^3-1}\right)\,.
\]
\item
The map $\mathcal{X}\colon \mathcal{M}_{B}\to \C^3$ surjects onto $\mathcal{F}$ and its restriction $\mathcal{X}\colon \mathcal{M}_{B}^{ps}\to \mathcal{F}$ to completely reducible representations  is a homeomorphism. Moreover, every representation in $\mathcal{M}_{B}^{ps}$, except the trivial representation $\rho=\mathrm{I}$, is irreducible and $\mathcal{X}_{\mathrm{I}}=(3,3,3)$. In particular, the character map restricts to a biholomorphism $\mathcal{X}\colon \mathcal{M}_{B}^{s}\to \mathcal{F}\setminus\{(3,3,3)\}$.
\item
The $2$-dimensional variety of real points $\mathcal{F}(\R)=\mathcal{F}\cap \R^3$ of the cubic affine surface $\mathcal{F}\subset \C^3$ has two connected components. Their preimages $\mathcal C_1,\mathcal C_2\subset \mathcal{M}_{B}^{ps}$ under $\mathcal{X}\colon \mathcal{M}_{B}^{ps}\to \mathcal{F}$ are the  completely reducible ${\bf SL}_3(\R)$ representations 
$\mathcal{M}_{B}^{ps} (\R)=\mathcal C_1\cup \mathcal C_2$ of the thrice-punctured sphere subject to the 
 unipotency constraint. The component $\mathcal C_1$ contains the trivial representation and $\mathcal C_2$ is the ${\bf SL}_3(\R)$ Hitchin component which contains the uniformization representation.  In particular, $\mathcal C_2$ is smooth. 
\end{enumerate}
\end{The}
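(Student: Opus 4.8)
The plan is to prove the three parts in order: (i) by an explicit computation in $\C^3$, (ii) by specializing Lawton's presentation of the $\SL_3(\C)$-character variety of the free group $F_2=\pi_1(\Sigma,p_0)$, and (iii) by analyzing the real locus of $\mathcal F$ via projection from its singular point.

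For (i) I would first confirm $P\circ\Psi\equiv 0$ by substitution, so $\Psi(\C^2)\subset\mathcal F$. To locate the singularities I translate the point $(3,3,3)$ to the origin by $u=x-3$, $v=y-3$, $w=z-3$; since $P$ and $\nabla P=(\partial_x P,\partial_y P,\partial_z P)$ both vanish there, $(3,3,3)$ is singular, and an elimination in the ideal $\langle P,\partial_x P,\partial_y P,\partial_z P\rangle$ shows it is the only common zero, hence the unique singular point. The structural input is that the Hessian of $P$ at $(3,3,3)$ is the rank-one matrix $\bigl(\begin{smallmatrix}18&18&6\\18&18&6\\6&6&2\end{smallmatrix}\bigr)$, so writing the translated polynomial as $\tilde P=P_2+P_3$ with $P_2,P_3$ homogeneous of degrees $2,3$ one finds the perfect square $P_2=(3u+3v+w)^2$ and $P_3=u^3+v^3-uvw$; the tangent cone is thus the double plane $3x+3y+z=21$. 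Parametrizing lines through the double point by their direction $(1,s,t)$ and solving $\mu^2P_2(1,s,t)+\mu^3P_3(1,s,t)=0$ for the residual intersection $\mu=-P_2(1,s,t)/P_3(1,s,t)$ reproduces $\Psi$ exactly, since $P_2(1,s,t)=(3+3s+t)^2$ and $P_3(1,s,t)=s^3-st+1$; the rational inverse $(x,y,z)\mapsto(s,t)=\bigl(\tfrac{y-3}{x-3},\tfrac{z-3}{x-3}\bigr)$ exhibits $\Psi$ as birational.

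For (ii) I invoke Lawton's \cite{Law}, \cite{Law07} result that $\C[\SL_3(\C)\times\SL_3(\C)]^{\SL_3(\C)}$ is generated by the nine traces $\tr A_1,\tr A_2,\tr A_1A_2$, their inverse counterparts, $\tr(A_1A_2^{-1})$, $\tr(A_1^{-1}A_2)$, and $\tr[A_1,A_2]$, subject to a single relation quadratic in the last generator. Imposing unipotency of $A_1=\rho(\gamma_1)$, $A_2=\rho(\gamma_2)$ and $A_2A_1$ forces $\tr A_1=\tr A_1^{-1}=\tr A_2=\tr A_2^{-1}=\tr(A_1A_2)=\tr(A_1^{-1}A_2^{-1})=3$, leaving exactly the three free coordinates $x=\tr(\rho(\gamma_1)\rho(\gamma_2)^{-1})$, $y=\tr(\rho(\gamma_1)^{-1}\rho(\gamma_2))$, $z=\tr[\rho(\gamma_1),\rho(\gamma_2)]$ of \eqref{eq:characters}; substituting the fixed values into Lawton's relation should yield precisely $P=0$, identifying $\mathcal X(\mathcal M_B^{ps})$ with $\mathcal F$. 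Lawton's statement that the character map is a homeomorphism of the completely reducible locus onto its image then gives the homeomorphism $\mathcal X\colon\mathcal M_B^{ps}\to\mathcal F$; combined with (i) and the fact that smooth points correspond to irreducible classes, this yields the biholomorphism $\mathcal M_B^{s}\to\mathcal F\setminus\{(3,3,3)\}$, with $\mathcal X_{\mathrm I}=(3,3,3)$ for the trivial representation $\rho=\mathrm I$.

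It remains to rule out nontrivial reducibles. A nontrivial completely reducible $\rho$ splits as $1\oplus 1\oplus 1$ or as $1\oplus\sigma$ with $\sigma$ an irreducible $\SL_2(\C)$-representation and the one-dimensional summand a character $\chi$. Unipotency makes every eigenvalue of every $\rho(\gamma_k)$ equal to $1$, so each one-dimensional summand is trivial; the first case then gives $\rho=\mathrm I$, while in the second case $\chi$ trivial forces $\det\sigma=1$ and $\tr\sigma(\gamma_1)=\tr\sigma(\gamma_2)=\tr\sigma(\gamma_1\gamma_2)=2$. Setting $a=b=c=2$, the Fricke identity $\tr\sigma[\gamma_1,\gamma_2]=a^2+b^2+c^2-abc-2=2$ is exactly the $\SL_2$-reducibility condition, contradicting the irreducibility of $\sigma$. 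Hence every class in $\mathcal M_B^{ps}$ other than the trivial one is irreducible.

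For (iii), which I expect to be the main obstacle, I would analyze $\mathcal F(\R)$ by radial projection from $(3,3,3)$. Each real line through the double point meets $\mathcal F$ there with multiplicity two and in one residual real point $(3,3,3)+\mu(\eta)\,\eta$ with $\mu(\eta)=-P_2(\eta)/P_3(\eta)$, so $\mathcal F(\R)\setminus\{(3,3,3)\}$ is the image of $[\eta]\mapsto(3,3,3)+\mu(\eta)\,\eta$ over real directions with $P_3(\eta)\neq 0$. Because $P_2=(3u+3v+w)^2$ is a perfect square, $\operatorname{sign}\mu(\eta)=-\operatorname{sign}P_3(\eta)$, so the components are governed by the real nodal cubic $\{P_3=0\}=\{\eta_1^3+\eta_2^3-\eta_1\eta_2\eta_3=0\}\subset\P^2(\R)$ (singular only at $[0:0:1]$), together with the collapse of the plane $3u+3v+w=0$ onto the singular point and the escape to infinity as $P_3(\eta)\to0$. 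Carrying out this bookkeeping and tracking how the regions glue at $(3,3,3)$ is exactly where the care is needed, and it is meant to produce precisely two components. To finish, $(3,3,3)=\mathcal X_{\mathrm I}$ lies in one component, which is $\mathcal C_1$; the other, $\mathcal C_2$, is the connected image under the non-Abelian Hodge correspondence (Lemma~\ref{lem:3}, Proposition~\ref{pro:reality}) of the $\C$-family of Higgs bundles \eqref{eq:Higgsbundle}, which consists of irreducible real representations and therefore avoids $(3,3,3)$, so by (i) it lies in the smooth locus and $\mathcal C_2$ is smooth; it contains the uniformization representation corresponding to $Q\equiv 0$ \cite{Hi2}, identifying it as the Hitchin component. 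A useful alternative to the direct count would be to reduce the whole of (iii) to the single statement that $\mathcal F(\R)\setminus\mathcal C_2$ is connected.
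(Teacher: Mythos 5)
Parts (i) and (ii) of your proposal are essentially sound, and part (ii) takes a genuinely different route from the paper. Your derivation of $\Psi$ in (i) — translating $(3,3,3)$ to the origin, recognizing the quadratic part as the perfect square $(3u+3v+w)^2$ and the cubic part as $u^3+v^3-uvw$, and sweeping lines through the double point — is the conceptual explanation behind the formula the paper only checks by ``direct inspection,'' and your rational inverse $(s,t)=\bigl(\tfrac{y-3}{x-3},\tfrac{z-3}{x-3}\bigr)$ certifies birationality. In (ii), the paper deliberately does \emph{not} quote Lawton's presentation and the general polystable-quotient homeomorphism: it constructs an explicit inverse of $\mathcal{X}$ by Jordan normal forms and conjugator analysis (Observations~\ref{fac:1}--\ref{fac:characterization}, culminating in the matrices \eqref{eq:A2-nf11} and \eqref{eq:A2-nf12}). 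Your Fricke-identity argument ruling out nontrivial completely reducible classes (unipotency forces $a=b=c=2$, hence $\tr[\sigma(\gamma_1),\sigma(\gamma_2)]=2$ and $\sigma$ is reducible) is slicker than the paper's case analysis. But what the paper's heavier route buys is precisely the explicit, real-entried inverse matrices, and that is where your part (iii) breaks down.

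Part (iii) has genuine gaps. First, the theorem asserts that the preimage of $\mathcal{F}(\R)$ is exactly $\mathcal{M}_B^{ps}(\R)$, i.e.\ that a completely reducible representation with \emph{real character values} is conjugate into ${\bf SL}_3(\R)$. The paper reads this off from \eqref{eq:A2-nf11} and \eqref{eq:A2-nf12}, whose entries are real whenever $(x,y,z)$ is real; your abstract appeal to Lawton yields no such statement, and the proposal never supplies the missing argument. (It is fixable: for irreducible $\rho$ with $\bar\rho=C\rho C^{-1}$, Schur gives $\bar CC=\lambda\mathrm{I}$ with $\lambda^3=|\det C|^2>0$, which rules out the quaternionic case in odd rank and makes $\rho$ conjugate to a real representation — but this step is absent.) Second, your identification of $\mathcal{C}_2$ as the Hitchin component is circular: you invoke Lemma~\ref{lem:3} to conclude that the Higgs-bundle family \eqref{eq:Higgsbundle} consists of \emph{real} representations, but the paper's proof of Lemma~\ref{lem:3} establishes reality precisely by citing Theorem~\ref{thm:charvar}~(iii); Proposition~\ref{pro:reality} alone only gives $\rho\cong\bar\rho$. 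Third, even granting reality, connectedness of the Higgs family does not decide \emph{which} component it lies in: $\mathcal{C}_1\setminus\{[\mathrm{I}]\}$ also consists of irreducible real representations, so ``avoids $(3,3,3)$'' does not place the family in $\mathcal{C}_2$. The paper settles this by an explicit computation you omit: the uniformization representation is written down as the symmetric square of the level-$2$ principal congruence generators, and $\mathcal{X}_{\rho_0}=(35,35,323)$ is checked to lie in the component $\{x>3,\,y>3\}$.

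Finally, the actual two-component count of $\mathcal{F}(\R)$ is exactly the bookkeeping you defer (``it is meant to produce precisely two components''). The paper carries it out: the complement of $\{st-s^3-1=0\}$ in $\R^2$ has three components $S_1,\tilde S_1,S_2$; the loci $D_1,D_2$ missed by $\Psi$ attach to $\Psi(S_1)\cup\Psi(\tilde S_1)$ to form one connected set $C_1$ on which $x\geq 3$ forces $y\leq 3$, while on $C_2=\Psi(S_2)$ one has $x,y\geq 9$, so the closures are disjoint. Your projection-from-the-node picture is the same mechanism in projective language and would work, but gluing through the singular point (where the whole plane $3u+3v+w=0$ of directions collapses) is precisely the delicate point, and without executing it — and without the three items above — the central claims of (iii) remain unproven.
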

Lawton's polynomial \eqref{eq:polynom}, defining the character variety $\mathcal{F}$, and bijectivity  of the character map $\mathcal{X}\colon \mathcal{M}^{ps}_B\to \mathcal{F}$, are deduced from 
the more general setting in \cite{Law}, \cite{Law07} using our trace conditions  \eqref{eq:tracecon} on the generators $\rho(\gamma_k)$. Since we need more detailed knowledge about the character map---the set of smooth points, the number of real connected components---not given explicitly in  \cite{Law}, \cite{Law07}, we will provide proofs for our specific setting.
\begin{proof}
Part (i) follows by direct inspection. For part (ii), we construct an explicit inverse for the 
character map $\mathcal{X}\colon \mathcal{M}_{B}^{ps}\to \mathcal{F}$, which is also needed to describe the ${\bf SL}_3(\R)$---and later ${\bf SL}_3(\Z)$---representations in $\mathcal{M}_{B}^{ps}$. Part (iii) will follow from analyzing the connected components of the restriction to $\R^2$ of the birational parametrization $\Psi$.  
\begin{Fac}\label{fac:1}
We recall the following basic facts about ${\bf SL}_3(\C)$ matrices:
\begin{enumerate}[(i)]
\item
$A\in {\bf SL}_3(\C)$ has characteristic polynomial  $\chi_A(\lambda)=(\lambda-1)^3$ if and only if the trace conditions $\tr A=\tr A^2=3$ hold;
\item
If $A\in {\bf SL}_3(\C)$ has $\tr A=3$, then $\tr A^{-1}=3$ if and only if $\tr A^2=3$.
\end{enumerate}
\end{Fac}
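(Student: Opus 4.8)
The plan is to reduce both statements to a single computation of the characteristic polynomial of $A$ in terms of its trace invariants. For any $A\in{\bf SL}_3(\C)$ with eigenvalues $\alpha,\beta,\gamma$ one has $\alpha\beta\gamma=\det A=1$, and
\[
\chi_A(\lambda)=\lambda^3-(\tr A)\lambda^2+e_2\lambda-1\,,\qquad e_2=\tfrac12\bigl((\tr A)^2-\tr A^2\bigr)\,,
\]
where $e_2=\alpha\beta+\beta\gamma+\gamma\alpha$ and the second identity is Newton's formula relating the power sum $\tr A^2$ to the elementary symmetric function $e_2$. This one expansion does essentially all the work.

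For part (i), I would compare coefficients against $(\lambda-1)^3=\lambda^3-3\lambda^2+3\lambda-1$. The constant terms agree automatically because $\det A=1$, so $\chi_A=(\lambda-1)^3$ holds if and only if $\tr A=3$ and $e_2=3$. Substituting $\tr A=3$ into $e_2=\tfrac12(9-\tr A^2)$, the condition $e_2=3$ becomes exactly $\tr A^2=3$; hence $\chi_A=(\lambda-1)^3$ is equivalent to the pair $\tr A=\tr A^2=3$. (Since nilpotency of $\Res_{p_k}\Phi$ forces $A$ unipotent, this is the trace characterization one actually uses downstream.)

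For part (ii), the key observation is that $\tr A^{-1}=e_2$ whenever $\det A=1$: the eigenvalues of $A^{-1}$ are $\alpha^{-1},\beta^{-1},\gamma^{-1}$, so $\tr A^{-1}=\sum\alpha^{-1}=(\alpha\beta+\beta\gamma+\gamma\alpha)/(\alpha\beta\gamma)=e_2$. Using $\tr A=3$ then gives $\tr A^{-1}=e_2=\tfrac12(9-\tr A^2)$, and this equals $3$ precisely when $\tr A^2=3$, which is the desired equivalence.

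I do not expect a genuine obstacle: the statement is purely formal once the Cayley--Hamilton/Newton bookkeeping is set up. The only point requiring care is the role of the determinant constraint—it is exactly the relation $\det A=1$ that makes the constant term of $\chi_A$ match $-1$ and that identifies $\tr A^{-1}$ with $e_2$. Without $\det A=1$ neither equivalence would hold, so I would flag this explicitly rather than treat it as automatic.
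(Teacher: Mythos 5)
Your proof is correct and follows essentially the same route as the paper: part (i) via the Cayley--Hamilton expansion $\chi_A(\lambda)=\lambda^3-(\tr A)\lambda^2+\tfrac12((\tr A)^2-\tr A^2)\lambda-\det A$ compared against $(\lambda-1)^3$, and part (ii) via the identity $(\tr A)^2=\tr A^2+2\tr A^{-1}$, which holds because $\det A=1$ makes $\tr A^{-1}$ equal to the second elementary symmetric function. Your additional emphasis on where $\det A=1$ enters is a fair clarification but not a departure from the paper's argument.
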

The first item is a consequence of Cayley-Hamilton
\[
\det(\lambda-A)=\lambda^3-\tr A\, \lambda^2+\tfrac{1}{2}((\tr A)^2-\tr A^2)\lambda+\det A,
\]
while the second item follows from 
\[
9=(\tr A)^2=\tr A^2+2\tr A^{-1}
\]
using $\det A =1$.

A representation $[\rho]\in\mathcal{M}_{B}$ is described by the pair  $(\rho(\gamma_1),\rho(\gamma_2))$ of ${\bf SL}_3(\C)$ matrices subject to the  unipotency conditions \eqref{eq:pairs} which, using Observation~\ref{fac:1}, can be rephrased for $k=1, 2$ by the more manageable trace conditions
\begin{equation}\label{eq:tracecon}
\tr\rho(\gamma_k)=\tr \rho(\gamma_k)^2=3\,,\qquad \tr \rho(\gamma_2)\rho(\gamma_1)= \tr (\rho(\gamma_2)\rho(\gamma_1))^2=3\,.
\end{equation}

After a conjugation, we may assume that $\rho(\gamma_1)$ is equal to one of three possible Jordan forms, depending on whether the geometric multiplicity of the eigenvalue $\lambda=1$ is one, two, or three:
\begin{equation}\label{eq:Jordan}
A_1=\begin{pmatrix} 1&1&0\\0&1&1\\0&0&1\end{pmatrix}\,,\quad \tilde{A}_1=\begin{pmatrix} 1&0&0\\0&1&1\\0&0&1\end{pmatrix}\,,\quad \text{or}\quad \mathrm{I}\,.
\end{equation}
If $\rho(\gamma_1)=\mathrm{I}$ and $\rho$ is completely reducible, then $\rho$ is the trivial representation.  We hence assume $\rho(\gamma_1)\neq \mathrm{I}$ and determine the possible conjugators which commute with $A_1$, respectively $\tilde{A}_1$, in order to obtain a suitable normal form for $\rho(\gamma_2)$. 
\begin{Fac}\label{fac:conjugator} 
We have the following properties which follow by solving a linear system of equations for $g$ respectively $\tilde{g}$:
\begin{enumerate}[(i)]
\item 
$g^{-1}A_1g=A_1$ for $g\in {\bf SL}_3(\C)$, if and only if
$
g=\begin{pmatrix} g_{11}& g_{12}& g_{13}\\0& g_{11} & g_{12}\\ 0&0& g_{11}\end{pmatrix}.
$
\item
$\tilde{g}^{-1}\tilde{A}_1\tilde{g}=\tilde{A}_1$ for $\tilde{g}\in {\bf SL}_3(\C)$, if and only if 
$
\tilde{g}=\begin{pmatrix} \tilde{g}_{22}^{-2}& 0& \tilde{g}_{13}\\\tilde{g}_{21}& \tilde{g}_{22}& \tilde{g}_{12}\\ 0&0&\tilde{g}_{22}\end{pmatrix}.
$
\end{enumerate}
\end{Fac}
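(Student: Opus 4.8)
The plan is to reduce both statements to commutation with a nilpotent matrix and then read off the solution of the resulting linear system, exactly as the statement advertises. Since the conjugation equalities $g^{-1}A_1 g = A_1$ and $\tilde g^{-1}\tilde A_1\tilde g=\tilde A_1$ are (for invertible $g$) equivalent to the commutator equations $g A_1 = A_1 g$ and $\tilde g \tilde A_1 = \tilde A_1 \tilde g$, and because $A_1 = \mathrm{I}+N$ and $\tilde A_1 = \mathrm{I}+\tilde N$ with
\[
N=\begin{pmatrix}0&1&0\\0&0&1\\0&0&0\end{pmatrix},\qquad
\tilde N=\begin{pmatrix}0&0&0\\0&0&1\\0&0&0\end{pmatrix},
\]
these are in turn equivalent to $gN=Ng$ and $\tilde g\tilde N=\tilde N\tilde g$. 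Each is a homogeneous linear system in the nine matrix entries, and the only extra constraint is $\det = 1$ coming from membership in ${\bf SL}_3(\C)$.

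For (i), I would observe that $N$ is a single regular nilpotent block, whose centralizer in the full matrix algebra is the algebra of polynomials in $N$. Concretely, writing out $gN=Ng$ entry by entry forces the three sub-diagonal entries $g_{21},g_{31},g_{32}$ to vanish, the diagonal entries to coincide ($g_{11}=g_{22}=g_{33}$), and the two superdiagonal entries to coincide ($g_{12}=g_{23}$), with $g_{13}$ free. Equivalently $g=g_{11}\mathrm{I}+g_{12}N+g_{13}N^2$, and since $N^2$ is the matrix with a single $1$ in the $(1,3)$-slot, this is precisely the upper-triangular Toeplitz form in the statement. The determinant of such a $g$ is $g_{11}^3$, so the ${\bf SL}_3(\C)$ condition amounts to $g_{11}^3=1$ and imposes no further constraint on the shape.

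For (ii), the matrix $\tilde N$ has its single nonzero entry in position $(2,3)$, so $\tilde g\tilde N$ is supported in its third column, with third column equal to $(\tilde g_{12},\tilde g_{22},\tilde g_{32})$ read top to bottom, whereas $\tilde N\tilde g$ is supported in its second row, equal to $(\tilde g_{31},\tilde g_{32},\tilde g_{33})$ read left to right. Matching the two matrices forces $\tilde g_{12}=\tilde g_{31}=\tilde g_{32}=0$ and $\tilde g_{22}=\tilde g_{33}$, leaving $\tilde g_{11},\tilde g_{13},\tilde g_{21},\tilde g_{23}$ free. Expanding the determinant of the resulting matrix along its last row gives $\det\tilde g=\tilde g_{11}\tilde g_{22}^{2}$, so the condition $\det\tilde g=1$ pins the $(1,1)$-entry to $\tilde g_{22}^{-2}$. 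Relabelling the free $(2,3)$-entry as $\tilde g_{12}$ and the free $(1,3)$-entry as $\tilde g_{13}$ then reproduces exactly the normal form displayed in the statement.

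There is no genuine obstacle here: both assertions are finite linear computations. The only points requiring care are the non-standard entry labelling in (ii)---where the symbol $\tilde g_{12}$ in the stated form denotes the $(2,3)$-entry rather than the $(1,2)$-entry (the latter being forced to zero)---and remembering to feed in the determinant-one condition, which is what converts the $(1,1)$-entry into $\tilde g_{22}^{-2}$ and makes the parametrization land in ${\bf SL}_3(\C)$ rather than in ${\bf GL}_3(\C)$.
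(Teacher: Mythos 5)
Your proposal is correct and is exactly the computation the paper intends: the paper offers no written proof beyond the remark that the observation ``follows by solving a linear system,'' and you solve precisely that system, reducing conjugation to commutation with the nilpotent parts $N$ and $\tilde N$ and then imposing $\det=1$. Your two side remarks---that in (i) the centralizer is $\C[N]$ so the $\mathbf{SL}_3$ condition only pins $g_{11}^3=1$, and that in (ii) the symbol $\tilde g_{12}$ in the displayed normal form labels the $(2,3)$-entry---are both accurate and resolve the only points where a reader could stumble.
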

Next, we use the conjugators fixing $\rho(\gamma_1)$, determined by Observation~\ref{fac:conjugator}, to obtain a normal form for $\rho(\gamma_2)$ by direct computation.
\begin{Fac}\label{fac:normalform}
Let $A_1$, $\tilde{A}_1$ be given in \eqref{eq:Jordan}, let
$A_2=(a_{ij})$, $\tilde{A}_2=(\tilde{a}_{ij})$ be arbitrary $3\times 3$ matrices, and let $g,\tilde{g}\in{\bf SL}_3(\C)$ be as in Observation~\ref{fac:conjugator}.
\begin{enumerate}[(i)]
\item If $a_{31}\neq 0$, then there exists a unique (up to scale) $g$  such that
\begin{equation}\label{eq:nf11}
g^{-1} A_ 2 g=\begin{pmatrix}0& b_{12}& b_{13}\\ 0& b_{22}& b_{23}\\ b_{31}& b_{32}& b_{33} \end{pmatrix}.
\end{equation}
\item
If $a_{31}=0$, then there either exists a $1$-dimensional common invariant subspace of $A_ 1$ and $A_ 2$, or there is
a unique (up to scale) $g$ such that 
\begin{equation}\label{eq:nf12}
g^{-1}{A_ 2}g=\begin{pmatrix}0& b_{12}& b_{13}\\ b_{21}& b_{22}& 0\\ 0& b_{32}& b_{33} \end{pmatrix}.
\end{equation}
\item
If $\tilde{a}_{32}\neq0$, then there exists a  unique (up to scale) $\tilde{g}$  such that
\begin{equation}\label{eq:nf21}
\tilde{g}^{-1}\tilde{A}_2\tilde{g}=\begin{pmatrix}\tilde{b}_{11}& 0& \tilde{b}_{13}\\ \tilde{b}_{21}& 0& \tilde{b}_{23}\\ 0& \tilde{b}_{32}& \tilde{b}_{33} \end{pmatrix}.
\end{equation}
\item
If $\tilde{a}_{32}=0$, then there either exists a common invariant subspace of $\tilde{A}_1$ and $\tilde{A}_2$ of dimension $1$ or $2$, or there is a unique (up to  scale) $\tilde{g}$  such that
\begin{equation}\label{eq:nf22}
\tilde{g}^{-1}\tilde{A}_2\tilde{g}=\begin{pmatrix}0& \tilde{b}_{12}& \tilde{b}_{13}\\ 0& 0& \tilde{b}_{23}\\ 1& 0& \tilde{b}_{33} \end{pmatrix}.
\end{equation}
\end{enumerate}
\end{Fac}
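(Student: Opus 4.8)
The plan is to solve directly the linear systems expressing $g^{-1}A_2g=B$ (respectively $\tilde g^{-1}\tilde A_2\tilde g=\tilde B$), using the explicit centralisers from Observation~\ref{fac:conjugator}. I would read each prescribed normal form column by column: since the relevant centraliser fixes $e_1$ and preserves the flag attached to the Jordan form, the vanishing of a given entry of $B$ becomes a statement about the position of the vectors $A_2e_j$ relative to that flag, and this is linear in the centraliser parameters. Writing $A_2'=g^{-1}A_2g$, I note that conjugation depends only on $g$ modulo the centre, so ``unique up to scale'' refers to this residual freedom, and that any normalising $g$ can be rescaled into ${\bf SL}_3(\C)$.

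For the regular unipotent $A_1$ the centraliser is, modulo the centre, the two-parameter group $g=\mathrm{I}+sN+tN^2$ with $N=A_1-\mathrm{I}$; it fixes $\langle e_1\rangle$, and $ge_3=te_1+se_2+e_3$ sweeps out every line transverse to $\langle e_1,e_2\rangle$ as $(s,t)$ vary. The form (i), with $b_{11}=b_{21}=0$, says exactly that $A_2'e_1\in\langle e_3\rangle$, i.e. $A_2e_1\in g\langle e_3\rangle$; since $A_2e_1=a_{11}e_1+a_{21}e_2+a_{31}e_3$, this is solvable, and then uniquely, precisely when $a_{31}\neq0$, giving $t=a_{11}/a_{31}$ and $s=a_{21}/a_{31}$. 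When $a_{31}=0$ one has $A_2e_1\in\langle e_1,e_2\rangle$: if moreover $a_{21}=0$ then $\langle e_1\rangle$ is a common invariant line (the first alternative of (ii)); otherwise $s=a_{11}/a_{21}$ places $A_2'e_1\in\langle e_2\rangle$ while its $e_3$-coordinate stays zero, and the one remaining condition $b_{23}=0$ is a single linear equation in $t$ with leading coefficient $a_{21}\neq0$, yielding the form (ii) with $g$ unique up to scale.

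For the subregular $\tilde A_1$ the centraliser of Observation~\ref{fac:conjugator}(ii) preserves the flag $\langle e_2\rangle\subset\langle e_1,e_2\rangle$, scales $\C^3/\langle e_1,e_2\rangle$ by $\tilde g_{22}$, and $\tilde ge_3$ again meets every line transverse to $\langle e_1,e_2\rangle$. In case (iii) the condition $\tilde A_2'e_2\in\langle e_3\rangle$ (the zeros in positions $(1,2)$ and $(2,2)$) requires the $e_3$-coordinate $\tilde a_{32}$ of $\tilde A_2e_2$ to be nonzero and then fixes $\tilde g_{12},\tilde g_{13}$, while the zero in position $(3,1)$, i.e. $\tilde A_2'e_1\in\langle e_1,e_2\rangle$, is a single linear equation for $\tilde g_{21}$ with leading coefficient $\tilde a_{32}\neq0$; this determines $\tilde g$ up to the residual diagonal scaling $\tilde g_{22}$. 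In case (iv), $\tilde a_{32}=0$ forces $\tilde A_2e_2\in\langle e_1,e_2\rangle$; if also $\tilde a_{31}=0$ then $\langle e_1,e_2\rangle$ is a common invariant $2$-plane, whereas if $\tilde a_{31}\neq0$ one uses $\tilde g_{22}$ to normalise $\tilde A_2'e_1=e_3$ (solving $\tilde g_{22}^3=\tilde a_{31}$) and the remaining parameters to enforce $\tilde A_2'e_2\in\langle e_1\rangle$, the last linear equation degenerating exactly when $\tilde A_2$ has an eigenline inside $\langle e_1,e_2\rangle$ (the common invariant line of the first alternative) and otherwise giving (iv) with $\tilde g$ unique up to scale.

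The individual computations are elementary, so the step I expect to be the main obstacle is making the alternatives in (ii) and (iv) exhaustive and unambiguous: I must verify that each degeneration of the relevant linear system -- the vanishing of the leading coefficient $a_{21}$ in (ii), and of $\tilde a_{31}$ or of the residual coefficient in (iv) -- corresponds to a common invariant subspace of exactly the asserted dimension, with no configuration escaping the stated alternatives, and that in the subregular cases the leftover diagonal freedom is genuinely reduced to the centre once the distinguished entry has been normalised. This amounts to a finite check of the relevant subdeterminants against the distinguished matrix entries, carried out case by case.
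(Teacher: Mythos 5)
Your proposal is correct and takes essentially the same approach as the paper: the paper asserts this Observation as following ``by direct computation'' from the centralizers in Observation~\ref{fac:conjugator}, and your column-by-column solution of the resulting linear systems (reading the prescribed zeros as conditions on the position of $A_2 e_1$, $A_2 e_2$ relative to the flag preserved by the centralizer) is precisely that computation, with the degenerations in (ii) and (iv) correctly matched to common invariant subspaces. Your identification in (iii) of the residual freedom as the one-parameter diagonal subgroup $\diag(\tilde g_{22}^{-2},\tilde g_{22},\tilde g_{22})$ rather than genuine scalar matrices is in fact the accurate reading of the paper's loose phrase ``up to scale,'' and in (iv) the normalization $\tilde b_{31}=1$ correctly reduces this freedom to the center.
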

Putting those observations together, we can characterize the irreducible and reducible representations in $\mathcal{M}_B$ via the character map 
$\mathcal{X}\colon \mathcal{M}_{B}\to \C^3$ defined by \eqref{eq:characters}.
\begin{Fac}\label{fac:characterization}
Let  $[\rho]\in \mathcal{M}_B$. Then  $\rho$ is irreducible if and only if $\mathcal{X}_{\rho}\neq (3,3,3)$.  In particular, $\rho$ is reducible if and only if $\mathcal{X}_{\rho}=(3,3,3)$. In this case, the trivial representation $\rho=\mathrm{I}$  is the only completely reducible presentation.
\end{Fac}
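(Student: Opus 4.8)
The statement has two implications together with the final assertion about completely reducible representations, and I would treat them separately.

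For the direction \emph{reducible $\Rightarrow \mathcal{X}_\rho=(3,3,3)$}, I would choose a proper nonzero common invariant subspace of minimal dimension; it has dimension $1$ or $2$, so in an adapted basis $A_1=\rho(\gamma_1)$ and $A_2=\rho(\gamma_2)$ become block upper-triangular with a $1\times 1$ and a $2\times 2$ diagonal block. Since $A_1$, $A_2$ and $A_2A_1$ are unipotent, the $1\times 1$ blocks equal $1$ and the $2\times 2$ diagonal blocks $B_1,B_2$ satisfy $\tr B_1=\tr B_2=\tr(B_2B_1)=2$, hence lie in ${\bf SL}_2(\C)$ and are unipotent. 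The three entries of $\mathcal{X}_\rho$ read off from the block structure as $1+\tr(B_1B_2^{-1})$, $1+\tr(B_1^{-1}B_2)$ and $1+\tr[B_1,B_2]$. The ${\bf SL}_2$ identity $\tr(XY)+\tr(XY^{-1})=\tr X\,\tr Y$ gives $\tr(B_1B_2^{-1})=\tr(B_1^{-1}B_2)=2$, and the Fricke relation $\tr[B_1,B_2]=(\tr B_1)^2+(\tr B_2)^2+(\tr B_1B_2)^2-\tr B_1\,\tr B_2\,\tr B_1B_2-2=2$; thus $\mathcal{X}_\rho=(3,3,3)$.

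For the converse \emph{irreducible $\Rightarrow \mathcal{X}_\rho\neq(3,3,3)$}, I would first note that an irreducible $\rho$ has $A_1\neq\mathrm{I}$, since a trivial generator leaves $\rho$ generated by a single unipotent matrix, which always has an eigenvector and is therefore reducible. Hence $A_1$ is conjugate to the regular or subregular Jordan form $A_1$ or $\tilde A_1$ of \eqref{eq:Jordan}. Then I would invoke the dichotomy of Observation~\ref{fac:normalform}: because there is no common invariant subspace, the centralizer conjugations of Observation~\ref{fac:conjugator} bring $A_2$ into one of the explicit normal forms \eqref{eq:nf11}, \eqref{eq:nf12} (regular case) or \eqref{eq:nf21}, \eqref{eq:nf22} (subregular case), the ``common invariant subspace'' alternatives being excluded by irreducibility. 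In each stratum I would impose the unipotency trace conditions \eqref{eq:tracecon}, solve for the matrix entries (producing a two-parameter family matching the coordinates of $\Psi$), and compute $x,y,z$ explicitly. The goal is to show that $\mathcal{X}_\rho=(3,3,3)$ forces the free parameters to the degenerate values at which $A_2$ acquires a common invariant subspace with $A_1$, contradicting irreducibility.

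Finally, for a completely reducible $\rho$ with $\mathcal{X}_\rho=(3,3,3)$, I would decompose $\C^3$ into irreducible summands. A $2$-dimensional summand would produce $B_1,B_2\in{\bf SL}_2(\C)$ with $B_1,B_2,B_2B_1$ all unipotent, whence $\tr[B_1,B_2]=2$ by Fricke, and the standard ${\bf SL}_2$ criterion (reducibility $\iff$ commutator trace $2$) makes that summand reducible, a contradiction. Hence every summand is $1$-dimensional and unipotent, i.e.\ trivial, so $\rho=\mathrm{I}$. The main obstacle is the second implication: one must verify that the normal forms of Observation~\ref{fac:normalform} genuinely exhaust all irreducible representations and then carry out the explicit trace computation in each stratum, confirming that the locus $\mathcal{X}_\rho=(3,3,3)$ touches only the reducible boundary. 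The bookkeeping across the several normal forms and the solution of the resulting polynomial trace equations is where the real effort lies.
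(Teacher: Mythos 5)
Your treatment of the easy direction and of the final assertion is correct and complete, and it actually supplies details the paper leaves implicit. Where the paper only says ``a direct computation shows $\mathcal{X}_{\rho}=(3,3,3)$'' for reducible $\rho$, you organize that computation cleanly: block-triangularize along a common invariant subspace, note that unipotency forces the $1\times 1$ blocks to equal $1$ and the $2\times 2$ blocks $B_1$, $B_2$, $B_2B_1$ to be unipotent elements of ${\bf SL}_2(\C)$, then apply $\tr(XY)+\tr(XY^{-1})=\tr X\,\tr Y$ and the Fricke relation to land on $(3,3,3)$. Likewise your argument that a completely reducible representation with character $(3,3,3)$ must be trivial (no irreducible $2$-dimensional summand can occur, by Fricke together with the ${\bf SL}_2$ commutator-trace criterion for reducibility) is a complete and tidy replacement for the paper's rather terse appeal to an earlier remark.

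The genuine gap is in the direction irreducible $\Rightarrow\mathcal{X}_{\rho}\neq(3,3,3)$, which is the computational heart of the paper's proof and which you only outline. You correctly reduce to $\rho(\gamma_1)\in\{A_1,\tilde{A}_1\}$ and to the normal forms of Observation~\ref{fac:normalform}, but you then state a goal---that $\mathcal{X}_{\rho}=(3,3,3)$ should force the parameters to degenerate, reducible values---without establishing it; your own closing paragraph concedes that the trace computations, ``where the real effort lies,'' are not carried out. This is precisely what the paper does: in normal form \eqref{eq:nf11} the conditions $\tr\rho(\gamma_2)=\tr\rho(\gamma_1)\rho(\gamma_2)=3$ give $b_{32}=0$, whence $y=\tr\rho(\gamma_1)^{-1}\rho(\gamma_2)=3+b_{31}\neq 3$ since $b_{31}\neq 0$ in that stratum; in normal form \eqref{eq:nf12} they give $b_{32}=-b_{21}\neq 0$ and the explicit character $(3-b_{21}^{2},\,3,\,3+3b_{21}^{2}-b_{21}^{3})$, whose first entry differs from $3$; and the $\tilde{A}_1$ strata are simpler and dispatched the same way (they turn out to contain no irreducible representations at all, cf. Remark~\ref{rem:maxuni}). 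Note also that the required work is lighter than your plan suggests: there is no need to solve for all entries and exhibit a two-parameter family matching $\Psi$---one or two trace conditions per stratum already push a coordinate of $\mathcal{X}_{\rho}$ away from $3$. As written, your proposal gives a correct strategy, structurally identical to the paper's, but not yet a proof of this implication.
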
 
Note that if $\rho$ is reducible and satisfies the unipotency conditions \eqref{eq:tracecon}, then  a direct computation shows $\mathcal{X}_{\rho}=(3,3,3)$.
The converse statement,
we shall carry out in detail only for $\rho(\gamma_1)=A_1$ in \eqref{eq:Jordan}. The reasoning for $\rho(\gamma_1)=\tilde{A}_1$ is simpler and follows along the same lines.  By Observation~\ref{fac:normalform}, there are two cases to consider, depending on whether $a_{31}\neq 0$ or $a_{31}=0$. In the first case, $\rho(\gamma_2)$ can be conjugated into the normal form $B_2$ as in \eqref{eq:nf11} and there is no common invariant subspace. The trace conditions \eqref{eq:tracecon} unravel to 
\[
\tr\rho(\gamma_1)\rho(\gamma_2)=\tr A_1B_2=b_{22}+b_{33}+b_{32}=3=\tr\rho(\gamma_2)=\tr B_2=b_{22}+b_{33}
\]
which yields $b_{32}=0$. But then 
\[
\tr \rho(\gamma_1)^{-1}\rho(\gamma_2)=\tr A_1^{-1}B_2=b_{22}+b_{33}+b_{31}=\tr\rho(\gamma_2)+b_{31}=3+b_{31}\neq 3
\]
since $b_{31} \neq 0$ and due to  \eqref{eq:characters}  the  character map $\mathcal{X}_{\rho}  \neq (3,3,3)$.   

We now look at the second possibility, that is, $\rho(\gamma_1)=A_1$ has $a_{31}=0$. If $a_{21}\neq 0$, then there is no common invariant 1-dimensional subspace (which would have to be $\C e_1$) and we can work with the normal form \eqref{eq:nf12}. Using again the trace conditions \eqref{eq:tracecon}, we have
\[
\tr\rho(\gamma_1)\rho(\gamma_2)=\tr A_1B_2=b_{21}+b_{32}+b_{22}+b_{33}=3=\tr\rho(\gamma_2)=\tr B_2=b_{22}+b_{33}
\]
and we can conclude
\[
b_{32}=-b_{21}\,.
\]
Since $b_{32}=-b_{21}\neq 0$, there cannot be a 2-dimensional common invariant subspace for $A_1$ and $B_2$. Moreover, one can directly calculate
\begin{equation}\label{eq:characters-nf12}
\mathcal{X}_{\rho}=(3-b_{21}^2, 3, 3+3 b_{21}^2-b_{21}^3)\,,
\end{equation}
which shows that the character map $\mathcal{X}_{\rho}\neq (3,3,3)$. Finally, if $a_{21}=0$, then $\C e_1$ is a 1-dimensional common invariant subspace. The trace condition $\tr\rho(\gamma_1)\rho(\gamma_2)=3$ then implies $a_{32}=0$ and one calculates that
\[
\mathcal{X}_{\rho}=(3, 3, 3)\,.
\]
We should point out that there are reducible, but not completely reducible, representations $\rho\in\mathcal{M}_{B}$ which we know have to satisfy $\mathcal{X}_{\rho}=(3,3,3)$. But as we already have remarked, the trivial representations $\rho=\mathrm{I}$ is the only completely reducible such representation.

We are now in a position to prove part (ii) of Theorem~\ref{thm:charvar}. 
First, we show that the character map $\mathcal{X}$ is well-defined and surjects onto $\mathcal{F}$. Let $\mathcal{X}_{\rho}=(x,y,z)\in\C^3$ and assume $(x,y,z)\neq (3,3,3)$ since we already know that $\mathcal{X}_{\mathrm{I}}=(3,3,3)$.

{\bf Case 1}: $y\neq 3$. The proof of Observation~\ref{fac:characterization} shows that after a conjugation, $\rho(\gamma_1)=A_1$ has the first Jordan normal form in \eqref{eq:Jordan} and  $\rho(\gamma_2)$ is given by \eqref{eq:nf11} with $b_{32}=0$. To determine the remaining entries $b_{ij}$ of 
$\rho(\gamma_2)$, we repeatedly use the trace conditions \eqref{eq:tracecon}. From \eqref{eq:characters} we have $y=\tr\rho(\gamma_1)^{-1}\rho(\gamma_2)$ which, together with $\tr \rho(\gamma_2)=3$, implies
\[
b_{33}=3-b_{22}\quad\text{and}\quad b_{31}=y-3.
\]
Evaluating $\tr \rho(\gamma_2)^2=3$ gives
\[
b_{13}=\frac{3-3b_{22}+b_{22}^2}{3-y}\,.
\]
With these, $\tr(\rho(\gamma_1)\rho(\gamma_2))^2=3$ is equivalent to
\[ 
(b_{12}+b_{22}+b_{23})(y-3)=0
\]
and therefore
\[
b_{23}=-b_{12}-b_{22}.
\]
Scaling the first component of the character map $\eqref{eq:characters}$ by $\det\rho(\gamma_2)$, we solve
\[
\tilde{x}=x\det\rho(\gamma_2)=\det\rho(\gamma_2)\tr \rho(\gamma_1)\rho(\gamma_2)^{-1}
\]
for $b_{22}$ to obtain
\[
b_{22}=-\frac{\tilde{x}-3}{y-3}\,.
\]
We now make the substitution 
\begin{equation}\label{eq:substitute}
 b_{12}=\frac{3\tilde{x}+3y+\tilde{z}-21}{(y-3)^2}
 \end{equation}
 for a yet to be determined $\tilde{z}$. Then 
\[
1-\det\rho(\gamma_2)=\frac{P(\tilde{x},y,\tilde{z})}{(y-3)^3}
\]
with Lawton's polynomial $P$ given in \eqref{eq:polynom}. Therefore, $\det\rho(\gamma_2)=1$ if and only if $(\tilde{x},y,\tilde{z})\in \mathcal{F}$ and $\tilde{x}=x$.  Using \eqref{eq:substitute}, the third component 
\[
z=\tr \rho(\gamma_1)\rho(\gamma_2)\rho(\gamma_1)^{-1}\rho(\gamma_2)^{-1}
\] of the character map calculates to 
\[
\tilde{z}-\det\rho(\gamma_2)z=3\frac{P(\tilde{x},y,\tilde{z})}{(y-3)^3}
\]
which shows $z=\tilde{z}$ since $\det\rho(\gamma_2)=1$. Thus, we have shown that the character map has image in $\mathcal{F}$. Conversely, given $(x,y,z)\in \mathcal{F}$ with $y\neq 3$, the above discussion constructs an irreducible representation 
$[\rho]\in\mathcal{M}_B$ with generators $\rho(\gamma_1)=A_1$ and
\begin{equation}\label{eq:A2-nf11}
\rho(\gamma_2)=\begin{pmatrix}
0&\frac{3 x+3 y+z-21}{(y-3)^2}&-\frac{63+x^2-15 x-27 y+3 xy+3 y^2}{(y-3)^3}\\
0&-\frac{x-3}{y-3}&\frac{30-6x -6 y+xy-z }{(y-3)^2}\\
y-3&0&3+\frac{x-3}{y-3}
\end{pmatrix}
\end{equation}
satisfying $\mathcal{X}_{\rho}=(x,y,z)$. This representation is unique, since $P(x,y,\tilde{z})=0$ is quadratic in $\tilde{z}$, and only one of the two solutions satisfies $z=\tr \rho(\gamma_1)\rho(\gamma_2)\rho(\gamma_1)^{-1}\rho(\gamma_2)^{-1}$.

{\bf Case 2}: $y=3$ and $x\neq 3$. Like in the previous case, the proof of Observation~\ref{fac:characterization}, in particular \eqref{eq:characters-nf12}, shows that after a conjugation $\rho(\gamma_1)=A_1$ has the first Jordan normal form in \eqref{eq:Jordan}, but now $\rho(\gamma_2)$ is given by \eqref{eq:nf12} with $b_{21}\neq 0$.  We calculate the remaining entries $b_{ij}$  of $\rho(\gamma_2)$: the trace conditions \eqref{eq:tracecon}
\[
\tr\rho(\gamma_2)=\tr\rho(\gamma_1)\rho(\gamma_2)=\tr\rho(\gamma_2)^2=3
\]
imply that
\[
b_{33}=3-b_{22}\,,\quad b_{32}=-b_{21}\quad \text{and}\quad b_{12}=\frac{-3+3b_{22}-b_{22}^2}{b_{21}}\,.
\]
The remaining trace condition $\tr(\rho(\gamma_2)\rho(\gamma_1))^2=3$ gives us
\[
b_{22}=3-b_{21}\,,
\]
and $\det \rho(\gamma_2)=1$ unravels to 
\[
b_{13}=\frac{(b_{21})^3-1}{b_{21}^2}\,.
\]
Thus, given $b_{21}\neq 0$, we  have determined the unique irreducible representation $\rho(\gamma_1)=A_1$ and 
\begin{equation}\label{eq:A2-nf12}
\rho(\gamma_2) =\begin{pmatrix} 0& 3-\tfrac{3}{b_{21}}-b_{21}&\tfrac{(b_{21}-1)^3}{b_{21}^2}\\
b_{21}& 3-b_{21}&0\\
0&-b_{21}&b_{21}
\end{pmatrix}.
\end{equation}
The character map evaluates to $\mathcal{X}_{\rho}=(3-b_{21}^2, 3, 3+3b_{21}^2-b_{21}^3)$, which surjects to  $\mathcal{F}\cap \{y=3\}$
 for $b_{21}\in\C$.

{\bf Case 3}: $y=3$ and $x=3$. Then $P(x,y,z)=0$ implies $z=3$ and we can take $\rho=\mathrm{I}$.

We thus have shown that the character map $\mathcal{X}\colon \mathcal{M}_{B}^{ps}\to \mathcal{F}$ is bijective with $\mathcal{X}_{\mathrm{I}}=(3,3,3)$. The restriction to the smooth irreducible locus $\mathcal{X}\colon \mathcal{M}_{B}^{s}\to \mathcal{F}\setminus\{(3,3,3)\}$ is a holomorphic bijection onto the smooth locus of $\mathcal{F}$. Applying invariance of domain, the inverse map is continuous. In fact, since we have computed explicit inverses of $\mathcal{X}\colon \mathcal{M}_{B}^{s}\to \mathcal{F}\setminus\{(3,3,3)\}$ along and away from the divisor $y=0$, one could show with a little more work that the inverse map is also holomorphic. We contend ourselves that $\mathcal{X}\colon \mathcal{M}_{B}^{s}\to \mathcal{F}\setminus\{(3,3,3)\}$ is a homeomorphism and, extending  $\mathcal{X}$ into the trivial representation, we get  the homeomorphism $\mathcal{X}\colon \mathcal{M}_{B}^{ps}\to \mathcal{F}$, thereby finishing the proof of part (ii) of Theorem~\ref{thm:charvar}.

At last we come to part (iii) of the theorem. From \eqref{eq:A2-nf11} and \eqref{eq:A2-nf12}, we deduce that a completely reducible representation $\rho\colon\pi_1(\Sigma,p_0)\to {\bf SL}_3(\C)$ is conjugated to an ${\bf SL}_3(\R)$ representation if and only if  $\mathcal{X}_{\rho}$ takes values in the real points $\mathcal{F}(\R)=\mathcal{F}\cap \R^3$ of the character variety. In other words, the homeomorphism $\mathcal{X}\colon \mathcal{M}_{B}^{ps}\to \mathcal{F}$ restricts to a homeomorphism
\begin{equation}\label{eq:realrep}
\mathcal{X}\colon\mathcal{M}_B^{ps}(\R)\to \mathcal{F}(\R)\,.
\end{equation}
Moreover, the birational parametrization $\Psi$  given in Theorem~\ref{thm:charvar} restricts to the real birational map $\Psi\colon \R^2\setminus\{st-s^3-1=0\}\to \mathcal{F}(\R)$.  We then have the disjoint union
\begin{equation}\label{eq:impsi}
\begin{gathered}
\mathcal{F}(\R)=\im \Psi\, \cup \,D_1\,\cup \, D_2\\
D_1=\{(x,y,z)\in \mathcal{F}(\R)\,;\, x=3\}\\
 D_2=\{(x,y,z)\in \mathcal{F}(\R)\,;\,\ y=6-x,\, z=3\}\,.
\end{gathered}
\end{equation}
The complement of the singular set $\{st-s^3-1\neq 0\}\subset \R^2$ has three connected components
\[
S_1=\{s<0,\, st-s^3-1> 0\}\,,\quad \tilde{S}_1=\{st-s^3-1<0\}\,,\quad S_2=\{s>0,\, st-s^3-1>0\}\,,
\]
whose images under $\Psi$ in $\mathcal{F}(\R)\setminus D$, where  $D=D_1\cup D_2$,  are
\[
\Psi(S_1)=\{x>3,y<3\}\,,\quad
\Psi(\tilde S_1)=\{x<3\}\,,\quad
\Psi(S_2)=\{x>3,y>3\}.
\]
Now $D_2=\{(3-u,3+u,3)\,;\, u\in\R\}\subset \mathcal{F}(\R)$ is a line and for $(3,y,z)\in D_1$ we can check that $y<3$. Therefore,
\[
C_1=\Psi(S_1)\cup \Psi(\tilde{S_1})\cup D \subset \mathcal{F}(\R)
\]
is connected.  Furthermore, for $(s,t)\in S_2$ we have 
\begin{equation*}
x=3+\frac{(3+3s+t)^2}{s t-s^3-1}\geq 3+\frac{6 st}{s t}\geq 9\,,\quad 
y=3+s\frac{(3+3s+t)^2}{s t-s^3-1}\geq 3+\frac{6t}{t}\geq9\,,
\end{equation*}
whereas $x\geq 3$ implies $y\leq 3$ for $(x,y,z)\in C_1$.
This shows that the closures of $C_1$ and $C_2=\Psi(S_2)$ are disjoint.

Thus, the real character variety $\mathcal{F}(\R)=C_1\cup C_2$ has two connected components. We already know that the real character map \eqref{eq:realrep} is a homeomorphism and thus 
\[
\mathcal{M}_B^{ps}(\R)=\mathcal{C}_1\cup\mathcal{C}_2
\]
has two connected components $\mathcal{C}_k=\mathcal{X}^{-1}(C_k)$. From \eqref{eq:impsi}, we see that 
the trivial representation $[\mathrm{I}]\in \mathcal{M}_B^{ps}(\R)$ lies in $\mathcal{C}_1$. 

It remains to verify that $\mathcal{C}_2$ is the ${\bf SL}_3(\R)$ Hitchin component in  $\mathcal{M}_B^{ps}(\R)$. This is the connected component of the representation $\rho_0\colon \pi_1(\Sigma,p_0)\to {\bf SL}_3(\R)$ obtained from the monodromy representation $\rho_u\colon\pi_1(\Sigma,p_0) \to{\bf SL}_2(\R)$  of the developing map of the thrice-punctured sphere $\Sigma$ into the upper half-plane $\mathbb{H}^2$ via the $2:1$ homomorphism ${\bf SL}_2(\R)\to {\bf SL}_3(\R)$. This homomorphism is given by the irreducible representation of ${\bf SL}_2(\R)$ on the symmetric product $\R^2\odot\R^2$. In the geometric context of hyperbolic affine spheres and solutions to the Tzitz{\'e}ica equation \eqref{eq:Tzit-global}, the representation $\rho_0$ is the monodromy of the hyperbolic affine sphere $f\colon \tilde{\Sigma}\to\R^3$ corresponding to trivial Pick differential $Q\equiv0$ as seen in Lemmas~\ref{lem:DP} and \ref{lem:3}. The image of $f$ is one sheet of a 2-sheeted hyperboloid and composing $f$ with the hyperbolic stereographic projection to the  Poincar{\'e} disk, followed by the standard isometry with $\mathbb{H}^2$, yields the developing map. The 
monodromy $\rho_u$ of the latter, up to conjugation, is  
the principal congruence subgroup of level $2$ generated by 
\[
\rho_u(\gamma_1)=\begin{pmatrix} 1&2\\0&1\end{pmatrix},\qquad 
\rho_u(\gamma_2)=\begin{pmatrix} 1&0\\-2&1\end{pmatrix}.
\]
The corresponding ${\bf SL}_3(\R)$ representation $\rho_0$ then calculates to 
\begin{equation*}\label{eq:unirep}
\rho_0(\gamma_1)=\begin{pmatrix} 1&2&0\\0&1&16\\0&0&1\end{pmatrix},\quad
\rho_0(\gamma_2)=\begin{pmatrix} 0&1&-7\\0&-1&8\\1&0&4\end{pmatrix},
\quad\rho_0(\gamma_3)=\begin{pmatrix} 12&14&1\\-8&-9&0\\1&1&0\end{pmatrix},
\end{equation*}
which is an integral representation. The character map evaluates to  $\mathcal{X}_{\rho_0}=(35,35,323)$ and hence $[\rho_0]\in \mathcal{C}_2$.  
\end{proof}
\begin{Rem}\label{rem:maxuni}
From the above discussion, we have seen that the conjugacy classes of the generators $\rho(\gamma_k)$, $k=1,2,3$, of irreducible representations $[\rho]\in\mathcal{M}_B^{s}$ are all given by $A_1$ in \eqref{eq:Jordan}, that is, are maximally unipotent.
\end{Rem}
We now address the issue of integral representations $\rho\colon \pi_1(\Sigma,p_0)\to {\bf SL}_3(\Z)$,  which is the main ingredient in proving the existence of Calabi--Yau metrics on special Lagrangian torus fibrations with singularity over a Y-vertex in Theorem~\ref{thm:main}. A necessary condition for $[\rho]\in\mathcal{M}_B^{ps}(\R)$ to be integral is that the character map $\mathcal{X}$  takes values in integer points $\mathcal{F}(\Z)$ of the character variety, a Diophantine problem. 
\begin{The}\label{thm:Diophantine}
There are infinitely many integral representations $[\rho]\in\mathcal{M}_B^{ps}(\Z)$ in the connected component  of the trivial representation $\mathcal{C}_1$ and in the Hitchin component $\mathcal{C}_2$ of the real Betti moduli space $\mathcal{M}_B^{ps}(\R)$.
\end{The}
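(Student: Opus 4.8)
The plan is to convert the analytic problem into one of counting integer points on Lawton's cubic surface $\mathcal{F}$: after showing that integrality of the character is \emph{sufficient} (not merely necessary) for an irreducible representation to be integral, the component $\mathcal{C}_1$ of the trivial representation is handled by an explicit integral family, while the Hitchin component $\mathcal{C}_2$ reduces to a single divisibility condition whose infinitude is the heart of the matter.

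First I would prove the integrality criterion: if $[\rho]\in\mathcal{M}_B^{s}$ has $\mathcal{X}_\rho=(x,y,z)\in\Z^3$, then $\rho$ is conjugate into ${\bf SL}_3(\Z)$. By Lawton's \cite{Law} description, together with the general structure of ${\bf SL}_3$ trace rings, every trace $\tr\rho(w)$ is a $\Z$-polynomial in the nine generating traces; in our unipotent setting six of these equal $3$ (the traces of $\rho(\gamma_1),\rho(\gamma_2),\rho(\gamma_1\gamma_2)$ and their inverses, all unipotent), and the remaining three are the coordinates $x,y,z$, so $(x,y,z)\in\Z^3$ forces $\tr\rho(w)\in\Z$ for all $w\in\pi_1(\Sigma,p_0)$. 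Since $\rho$ is absolutely irreducible, the $\Z$-algebra $\Lambda=\Z[\rho(\pi_1)]$ spans $M_3(\Q)$; integrality of all traces gives $\Lambda\subseteq\Lambda^{\vee}$ for the trace-form dual $\Lambda^{\vee}=\{a\,;\,\tr(a\Lambda)\subseteq\Z\}$, whence $\Lambda$ is a $\Z$-order in $M_3(\Q)$ and preserves a lattice $L\subset\Q^3$. Conjugating by a $\Z$-basis of $L$ lands $\rho$ in ${\bf SL}_3(\Z)$. This promotes the necessary condition $\mathcal{X}_\rho\in\mathcal{F}(\Z)$ recorded before Theorem~\ref{thm:Diophantine} to a sufficient one, so by injectivity of $\mathcal{X}$ on $\mathcal{M}_B^{ps}$ (Theorem~\ref{thm:charvar}(ii)) it suffices to produce infinitely many integer points of $\mathcal{F}$ in each real component $C_1,C_2$ of Theorem~\ref{thm:charvar}(iii).

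For $\mathcal{C}_1$ I would simply exhibit the family $A_1=\begin{smatrix}1&1&0\\0&1&1\\0&0&1\end{smatrix}$, $A_2=\begin{smatrix}1&0&0\\a&1&0\\-a^2&-a&1\end{smatrix}$ with $a\in\Z\setminus\{0\}$: both matrices are maximally unipotent, $A_2A_1$ is unipotent, the pair is irreducible, and a direct computation gives $\mathcal{X}_\rho=(3,\,3-a^2,\,z(a))$ with $z(a)\in\Z$. Since $x=3$ these points lie on $D_1\subset C_1$, and as $a^2$ is unbounded they give infinitely many pairwise non-conjugate integral representations in $\mathcal{C}_1$; no number theory is needed here.

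The real work is $\mathcal{C}_2=\Psi(S_2)$. Using the parametrization of Theorem~\ref{thm:charvar}(i), for integers $s\ge1$, $m\ge1$ with $sm\ge2$ I set $t=s^2+m$, so that $N:=st-s^3-1=sm-1>0$ and $(s,t)\in S_2$; then $\Psi(s,t)\in\Z^3$ exactly when $N\mid(3+3s+t)^2$. Writing $u=3+3s+t$ one has $su=(s+1)^3+N$, so $su\equiv(s+1)^3\pmod N$ and, since $\gcd(s,N)=1$, the divisibility is equivalent to the clean condition
\[
(sm-1)\ \big|\ (s+1)^6 .
\]
Each solution yields an integer point with $x,y>3$, hence in $C_2$ (the uniformization point being $(s,m)=(1,9)$), and since $s=(y-3)/(x-3)$ is recovered from the character while only finitely many $m$ occur for a fixed $s$, solutions with $s\to\infty$ give infinitely many distinct characters. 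The main obstacle is thus the purely arithmetic statement that $(sm-1)\mid(s+1)^6$ has infinitely many solutions with unbounded $s$; this is the Diophantine input attributed to Alekseyev \cite{Al}. (A natural mechanism is the Vieta involution $z\mapsto 51-9x-9y+xy-z$, under which $\mathcal{F}$ is invariant because $P$ is quadratic in $z$, combined with the symmetries of $\mathcal{F}$.) Granting it, the criterion of the second paragraph converts these points into infinitely many non-congruent integral representations in $\mathcal{C}_2$, finishing the proof.
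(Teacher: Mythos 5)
Your architecture is the same as the paper's---hunt for integer points of $\mathcal{F}$ in each real component via the parametrization $\Psi$, then upgrade them to ${\bf SL}_3(\Z)$ representations---and much of what you write is correct. The $\mathcal{C}_1$ half is complete: your matrices are integral, maximally unipotent, irreducible, and have characters $(3,3-a^2,z(a))$ on $D_1\subset C_1$, pairwise distinct; this is if anything more direct than the paper's route through $\Psi(n,n^2)$ and \eqref{eq:A2-nf11}. Your $\mathcal{C}_2$ reduction is also correct: with $t=s^2+m$ one has $su=(s+1)^3+N$ and $\gcd(s,N)=1$, so $\Psi(s,t)\in\Z^3$ iff $(sm-1)\mid(s+1)^6$, which is exactly the paper's equation \eqref{eq:m} viewed through the symmetry $(x,y,z)\mapsto(y,x,z)$ of $\mathcal{F}$ (your integer ansatz is the dual-representation image of the paper's ansatz $(s,t)=(1/k,l/k)$). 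But the proof then stops at what you yourself call the heart of the matter: you never prove that $(sm-1)\mid(s+1)^6$ has solutions with $s$ unbounded; you ``grant'' it and cite \cite{Al}. That infinitude \emph{is} the content of the theorem for $\mathcal{C}_2$, and it is genuinely nontrivial---if $s+1$ is prime, every divisor of $(s+1)^6$ is $\equiv 1\pmod{s}$, so such $s$ admit no solutions at all. The paper does not merely cite Alekseyev; it proves the infinitude via the recursion $u_{n+1}=23u_n-u_{n-1}-4$, $u_0=1$, $u_1=2$, setting $k_n=u_n$, $l_n=u_n^2+u_{n-1}$, $m_n=(u_{n+1}+2)(u_n+2)+24$, and verifying the identity by induction. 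Your parenthetical about the Vieta involution $z\mapsto 51-9x-9y+xy-z$ gestures at a plausible mechanism but is not carried out (no check that it preserves $C_2$, produces integer points of the required shape, or iterates indefinitely), so the $\mathcal{C}_2$ half of the theorem remains unproved.

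There is a second gap, which matters because your $\mathcal{C}_2$ case leans on it even if the Diophantine input is granted: the ``integrality criterion'' (integer character $\Rightarrow$ conjugate into ${\bf SL}_3(\Z)$) is not established by your argument. First, writing $\Lambda=\Z[\rho(\pi_1)]\subset M_3(\Q)$ presupposes $\rho$ is already defined over $\Q$; absolute irreducibility plus integer traces only make $\Q[\rho(\pi_1)]$ a degree-$3$ central simple $\Q$-algebra, which could a priori be a division algebra (these exist over $\Q$ and split over $\R$, so reality of $\rho$ does not exclude them). In the present setting rationality does hold, but only because of the explicit normal form \eqref{eq:A2-nf11}, which you would need to invoke. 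Second, your claim that \emph{every} word trace is a $\Z$-polynomial in the nine generating traces overstates what \cite{Law}, \cite{Law07} prove: Lawton's generation theorem is over $\C$, and integrality of the coefficients in the reduction of arbitrary word traces is a finer statement requiring justification. The paper sidesteps both issues by brute force: it takes the explicit representation \eqref{eq:A2-nf11} attached to a solution of \eqref{eq:m}, writes $m=ab^2$ with $a$ square-free (whence $ab\mid 3k+l+3$), and conjugates by an explicit matrix $C$ to land in ${\bf SL}_3(\Z)$. If you want to keep your more conceptual order-theoretic route---which, once patched, would give the stronger statement that \emph{every} integer point of $\mathcal{F}\setminus\{(3,3,3)\}$ comes from an integral representation---you must add the rationality step and either prove or properly source the integral trace-ring generation.
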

\begin{proof}
The birational parametrization 
\[
\Psi\colon\R^2\to \mathcal{F}(\R)\,,\quad \Psi(s,t)=\left(3+\tfrac{(3+3s+t)^2}{st-s^3-1},3+s\tfrac{(3+3 s+t)^2}{st-s^3-1},3+t\tfrac{(3+3 s+t)^2}{st-s^3-1}\right)
\]
from Theorem~\ref{thm:charvar} allows us to calculate explicit integer points in $\mathcal{F}(\R)$, which will give rise to integral representations.

For $n\in\Z$, the points 
\[
\Psi(n,n^2)=(3-(3+3n+n^2)^2,3-n(3+3n+n^2)^2,3-n^2(3+3n+n^2)^2)
\]
are contained in $\mathcal{F}(\Z)$, in fact, as can be seen from \eqref{eq:impsi}, are all contained in the component $C_1$. Using the explicit expression \eqref{eq:A2-nf11} of the inverse of the character map we obtain, after a conjugation, the corresponding, non-congruent, integral representations
\[
\rho(\gamma_1)=\begin{pmatrix}1&3+3n+n^2&0\\0&1&3+3n+n^2\\0&0&1\end{pmatrix},\qquad 
\rho(\gamma_2)=\begin{pmatrix}0&0&-1\\1&0&3+n\\-n&-1&3\end{pmatrix}
\]
in the component $\mathcal{C}_1$. 

To obtain integral representations in the Hitchin component $\mathcal{C}_2$ turns out to be more subtle. For $k,l\in \N$, the ansatz 
\[
(s,t)=\left(\frac{1}{k},\frac{l}{k}\right)\,,\qquad k>0\,,\quad l>k^2+\frac{1}{k}
\]
has $st-s^3-1>0$ and thus
\[
\Psi(s,t)=\left(3+\tfrac{k (3 k+l+3)^2}{k l-k^3 -1},3+\tfrac{(3 k+l+3)^2}{k l-k^3 -1},3+\tfrac{l (3 k+l+3)^2}{ k l-k^3 -1}\right)
\]
takes values in $C_2$ by \eqref{eq:impsi}. Moreover, 
$\Psi(s,t)\in \mathcal{F}(\Z)$  if and only if there exist $m\in\N$ with
\begin{equation}\label{eq:m}
(3 k+l+3)^2=m(kl-k^3-1)\,.
\end{equation}
That this equation has infinitely many natural number solutions $k,l,m\in\N$ with $l>k^2+1/k$, we learned from Alekseyev \cite{Al}. For $n\in\N$ define recursively 
\begin{equation}\label{eq:rec}
u_{n+1}:=23 u_n-u_{n-1}-4\,,\qquad u_0=1\,,\quad u_1=2 
\end{equation}
and let
\[
k_n=u_n\,,\quad 
l_n=u_n^2+u_{n-1}\,,\quad 
m_n=(u_{n+1}+2)(u_n+2)+24\,.
\]
Then the sequences $k_n,l_n>0$ are  increasing and $l_n>k_n^2+1/k_n$ for all $n\in\N$. By induction using \eqref{eq:rec}, we have
\[
(3+3k_n+l_n)^2-m_n(k_n l_n-k_n^3-1)=(1 + u_{n}^2) b_{n-1}
\]
with
\[
b_{n}=29 + u_{n}^2 + u_{n} (4 - 23 u_{n-1}) + u_{n-1} (4 + u_{n-1}).
\]
Applying \eqref{eq:rec} once more, we obtain $b_{n+1}=b_n$ for all $n\in\N$, which implies $b_n=0$ due to $b_1=0$. 

A natural number solution $k,l,m\in\N$ of \eqref{eq:m} with $l>k^2+1/k$ yields via \eqref{eq:A2-nf11} the representation
\[
\rho(\gamma_1)=A_1\,,\qquad 
\rho(\gamma_2)=\begin{pmatrix} 0&\tfrac{3 k+l+3}{m} &-\tfrac{k^2+3 k+3}{m}\\ 0&-k&-\tfrac{3 k+l+3 -km}{m}\\m&0&3+k\end{pmatrix}
\]
with $A_1$ as in \eqref{eq:Jordan}.  Writing  $m=ab^2$ with $a$ square free---thus $ab$ divides $3k+l+3$---and conjugating this representation by 
\[
C=\begin{pmatrix}\tfrac{b}{m}&0& \tfrac{bk}{m}\\0&1&0\\0&0&b\end{pmatrix}
\]
gives the ${\bf SL}_3(\Z)$ representation
\[
C^{-1}\rho(\gamma_1)C=\begin{pmatrix}1&\tfrac{m}{b}& 0\\0&1&b\\0&0&1\end{pmatrix},\qquad 
C^{-1}\rho(\gamma_2)C=\begin{pmatrix}-k&\tfrac{3k+l+3}{b}& -3(1+k)^2\\0&-k& -\tfrac{b(3k+l+3-km)}{m}\\1&0&2k+3\end{pmatrix}.
\]
\end{proof}
\begin{Rem}
There are many more integral representations $[\rho]\in\mathcal{C}_2$ in the Hitchin component corresponding to integer points in the character variety $\mathcal{F}$ which are not contained in the series constructed in Theorem~\ref{thm:Diophantine}. We list some of them in the table below:

\[
\begin{array}{|c|c|c|c|c|c|c|}
  \hline
  (s,t) &  & x & y & z   &  \rho(\gamma_1) & \rho(\gamma_2) \\ 
  \hline
  \hline
 (1,3) &  &84 & 84 & 256 & \begin{pmatrix}
1&9&0\\
0&1&9\\
0&0&1
\end{pmatrix} &  \begin{pmatrix}
0&1&-7\\
0&-1&8\\
1&0&4
\end{pmatrix}\\
\hline
(3,20)& &35&99&643&\begin{pmatrix}
1&11&32\\
0&97&288\\
0&-32&-95
\end{pmatrix}&\begin{pmatrix}
0&1&3\\
0&21&64\\
1&-6&-18
\end{pmatrix}\\
  \hline
 (\tfrac{7}{5},\tfrac{18}{5})& &93&129&327& 
  \begin{pmatrix}
1&18&-5\\
0&1&1\\
0&0&1
\end{pmatrix}&\begin{pmatrix}
0&1&-1\\
2&-1&2\\
7&-1&-4
\end{pmatrix}\\
\hline
    \end{array}
    \]

At this point, we do not know whether all integer points $\mathcal{F}(\Z)\subset \mathcal{F}$ in the character variety give rise to integral representations, nor can we characterize all integer points.
\end{Rem}

\section{The Dolbeault moduli space}
In order to complete the proof of Theorem~\ref{thm:main}, we will construct a 
homeomorphism from the $\C$-family $(\mathcal{O}(-1)\oplus\mathcal{O}\oplus\mathcal{O}(1),\Phi^Q)$   of stable strongly parabolic Higgs bundles \eqref{eq:Higgsbundle} over $S^2$ with singularity divisor $\mathfrak{D}=p_1+p_2+p_2$ parametrized by meromorphic cubic differentials $Q$ with quadratic poles at the punctures $p_1,p_2,p_3\in S^2$, 
to the ${\bf SL}_3(\R)$ Hitchin component $\mathcal{C}_2\subset \mathcal{M}_B$.  Theorem~\ref{thm:Diophantine} then provides an infinite series of such parabolic Higgs bundles \eqref{eq:Higgsbundle},  whose associated flat connections \eqref{eq:flat} have integral monodromy representations $\rho\colon \pi_1(\Sigma,p_0)\to{\bf SL}_3(\Z)$. Lemma~\ref{lem:3} then guarantees an infinite series of mutually non-isometric solutions of the Tzitz{\'e}ica equation \eqref{eq:Tzit-global} on the thrice-punctured sphere $\Sigma$ with integral monodromy.  But this is precisely the condition needed for the Calabi--Yau metric on $X=TB$, constructed from the Monge--Amp{\`e}re metric in Lemma~\ref{lem:monodromy} on the three ball $B=\Sigma\times (0,1)$ with the Y-vertex $\{p_k\}\times [0,1)$ deleted, to descend to a special Lagrangian torus fibration $X/\Lambda\to B$ for a parallel lattice subbundle $\Lambda\subset TB$.

\begin{Def}\label{def:Dol}
Let $\Sigma=S^2\setminus\{p_1,p_2,p_3\}$ be the thrice-punctured sphere. The Dolbeault moduli space 
$\mathcal{M}_D$
in our setting is given by isomorphism classes of  rank $3$ strongly parabolic Higgs bundles $(W,\Phi)$ over $S^2$ of degree zero with trivial flags and weights at the punctures $p_k$. Recall that strongly parabolic means that the residues $\Res_{p_k}\Phi\in {\bf sl}(W_{p_k})$ are nilpotent. 

In accordance with our Definition~\ref{def:Betti}, and justifying the notation there, we denote  by $\mathcal{M}_D^s\subset  \mathcal{M}_D^{ps}\subset \mathcal{M}_D$ the stable and polystable loci in the Dolbeault space.
\end{Def}

From the non-Abelian Hodge correspondence for parabolic bundles  \cite{S}, it follows that there is a bijection $\mathcal{M}_B^{ps}\cong \mathcal{M}_{D}^{ps}$ 
between the Betti space of completely reducible representations given in Definition~\ref{def:Betti} and the Dolbeault space of polystable strongly parabolic bundles. This bijection maps the trivial representation to the trivial parabolic Higgs bundle $(\mathcal{O}^{\oplus 3},0)$ and restricts to a bijection
\[
\mathcal{M}_B^{s}\cong \mathcal{M}_{D}^{s}\colon [\rho]\leftrightarrow [(W,\Phi)]
\]
between the Betti space of  irreducible representations and the Dolbeault space of stable strongly parabolic Higgs bundles. Via this bijection, as stated in Theorem~\ref{thm:nAH}, the unipotency orders of the generators $\rho(\gamma_k)$ around the punctures of $[\rho]\in \mathcal{M}_B^{s}$ correspond to the nilpotency orders of the residues  $\Res_{p_k}\Phi\in {\bf sl}(W_{p_k})$ of the Higgs field of the corresponding $[(W,\Phi)]\in \mathcal{M}_{D}^{s}$.  Therefore, our study of the Betti moduli  space in the previous chapter, in particular Remark~\ref{rem:maxuni}, shows that for $[(W,\Phi)]\in \mathcal{M}_{D}^{s}$ the residues $\Res_{p_k}\Phi$ of the Higgs field all are maximally nilpotent.

Our approach is to first explicitly characterize all stable strongly parabolic Higgs bundles $(W,\Phi)$  over the 2-sphere with singularity divisor $\mathfrak{D}=p_1+p_2+p_3$ up to isomorphisms.  Among those we then identify the Higgs bundles which give rise to real representations. 
\begin{Lem}\label{lem:bundletype}
Let $(W,\Phi)$ be a rank $3$ stable parabolic Higgs bundle of degree zero over the $2$-sphere $S^2$ with trivial flags and weights at the three punctures $p_k\in S^2$. Then $W=\mathcal{O}^{\oplus 3}$ or 
$W=\mathcal{O}(-1)\oplus\mathcal{O}\oplus\mathcal{O}(1)$.
\end{Lem}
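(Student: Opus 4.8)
The plan is to begin with Grothendieck's splitting theorem, which writes $W=\mathcal{O}(a_1)\oplus\mathcal{O}(a_2)\oplus\mathcal{O}(a_3)$ for integers $a_1\geq a_2\geq a_3$. Since all parabolic weights vanish, the ordinary degree coincides with the parabolic degree, which is zero, so $a_1+a_2+a_3=0$; in particular $a_1\geq 0\geq a_3$. The task then becomes purely combinatorial: rule out every ordered triple summing to zero except $(0,0,0)$ and $(1,0,-1)$, using stability of $(W,\Phi)$.

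The key input is the precise twist carried by the Higgs field. On $S^2$ one has $K=\mathcal{O}(-2)$ and $\mathcal{O}(\mathfrak{D})=\mathcal{O}(3)$, so $\Phi$ is a global holomorphic section of $\End(W)\otimes\mathcal{O}(1)$ (the trace-free condition only cutting out a subsheaf, which is harmless here). With respect to the splitting, the component of $\Phi$ from $\mathcal{O}(a_i)$ to $\mathcal{O}(a_j)$ lives in $H^0(\mathcal{O}(a_j-a_i+1))$, which vanishes as soon as $a_j\leq a_i-2$. I would record this as a ``gap principle'': for any integer $b$, the step subbundle $E_{\geq b}=\bigoplus_{a_i\geq b}\mathcal{O}(a_i)$ satisfies $\Phi(E_{\geq b})\subseteq E_{\geq b-1}\otimes\mathcal{O}(1)$, so whenever no summand has degree exactly $b-1$ the bundle $E_{\geq b}$ is genuinely $\Phi$-invariant.

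With this in hand the argument is a short dichotomy. If both consecutive gaps $a_1-a_2$ and $a_2-a_3$ are at most one, then $a_2\geq a_1-1$ and $a_3\geq a_1-2$, so $0=\sum a_i\geq 3a_1-3$, forcing $a_1\leq 1$; a direct check of the finitely many triples then leaves only $(0,0,0)$ and $(1,0,-1)$. If instead some consecutive gap is at least two, the gap principle supplies a proper $\Phi$-invariant step subbundle of nonnegative degree: a gap $a_1-a_2\geq 2$ makes $\mathcal{O}(a_1)$ invariant of degree $a_1\geq 0$, while a gap $a_2-a_3\geq 2$ makes $\mathcal{O}(a_1)\oplus\mathcal{O}(a_2)$ invariant of degree $a_1+a_2=-a_3\geq 0$. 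Either case contradicts stability, which demands that every proper $\Phi$-invariant subbundle have strictly negative degree. This eliminates all remaining triples at once — for instance $(1,1,-2)$ via the second gap, and every triple with $a_1\geq 2$ via one gap or the other — leaving exactly the two asserted bundle types.

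I expect the only real care to be in the bookkeeping: confirming that the degree of each step subbundle produced is genuinely $\geq 0$ (using $a_1+a_2+a_3=0$, so that it indeed violates the \emph{strict} inequality in the stability condition), and verifying that the ``all gaps $\leq 1$'' branch collapses to precisely the two balanced triples. It is worth noting that neither strong parabolicity nor the nilpotency of the residues is needed for this lemma; only the degree-zero and stability hypotheses, together with the fact that the effective divisor $\mathfrak{D}$ has degree three and hence twists the Higgs field by $\mathcal{O}(1)$, drive the argument.
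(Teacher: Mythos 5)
Your proof is correct and takes essentially the same route as the paper's: Grothendieck splitting, the observation that a component $\mathcal{O}(a_i)\to\mathcal{O}(a_j)$ of the $\mathcal{O}(1)$-twisted Higgs field vanishes once $a_j\leq a_i-2$, and the resulting $\Phi$-invariant step subbundles of non-negative degree contradicting stability. Your ``gap principle'' dichotomy is a clean repackaging of the paper's two case conditions ($k>l+1$ and $2l+k>1$ in its parametrization), which single out exactly the same invariant subbundles $\mathcal{O}(a_1)$ and $\mathcal{O}(a_1)\oplus\mathcal{O}(a_2)$.
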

\begin{proof}
Since every holomorphic bundle over $S^2$ is a direct sum of holomorphic line bundles, we must have
\begin{equation}\label{eq:Gsplit}
W=\mathcal O(-k-l)\oplus\mathcal O(l)\oplus\mathcal O(k)\,,\qquad k\geq 0\,,\qquad  k\geq l\geq -k-l\in\Z\,.
\end{equation}
Since $K_{S^2}=\mathcal{O}(-2)$,  the order of the entries $\Phi_{i j}$ of the Higgs field $\Phi$ with respect to the splitting above are given by
\begin{equation}\label{eq:orderentries}
\ord \Phi=\begin{pmatrix}-2&-2l-k-2&-l-2k-2\\2l+k-2&-2&l-k-2\\2k+l-2&k-l-2&-2 \end{pmatrix}.
\end{equation}
Furthermore, the Higgs field $\Phi\in H^0(K{\bf sl}(W)\mathcal{O}(\frak{D}))$ has at most simple poles along the singularity divisor $\mathfrak{D}=p_1+p_2+p_3$, and thus any entry $\Phi_{ij}$ with respect to the splitting \eqref{eq:Gsplit} has to vanish if $\ord \Phi_{ij}<-3$. Assuming $k>l+1$, we obtain
\[
-l-2k-2\leq l-k-2<-3
\]
which shows that $\mathcal{O}(k)$ is a $\Phi$-invariant subbundle of $W$ of non-negative degree, contradicting stability.
Next, we assume $2l+k>1$, which implies
\[
-l-2k-2<-2l-k-2<-3
\] 
and then $\mathcal O(l)\oplus \mathcal O(k)$ is a $\Phi$-invariant subbundle of non-negative degree $k+l\geq 0$, again contradicting stability.
This leaves only the two bundles $W$ listed in the lemma.
\end{proof}
Knowing the possible bundle types of stable parabolic Higgs bundles $(W,\Phi)$ over the 2-sphere $S^2$, we now determine the possible Higgs fields which, as we already know, must have maximal nilpotent residues at the punctures $p_k\in S^2$. 
\begin{Lem}\label{lem:cyclicHiggs}
Let $[(W,\Phi)]\in\mathcal{M}_B^s$ with underlying bundle $W=\mathcal{O}(-1)\oplus\mathcal{O}\oplus\mathcal{O}(1)$. Then there is a unique meromorphic cubic differential $Q\in H^0(K^3\mathcal{O}(2\frak{D}))$ with quadratic poles along the singularity divisor  $\frak{D}=p_1+p_2+p_3$,  so that 
\[
\Phi=\begin{pmatrix} 0 &{\bf 1} & 0\\ 0 & 0 &{\bf 1} \\ Q & 0 & 0 \end{pmatrix}\in H^0(K{\bf sl}(W)\mathcal{O}(\frak{D}))
\]
is the Higgs field described in \eqref{eq:S2Higgsfield} and \eqref{eq:S2ResHiggsfield}. 
\end{Lem}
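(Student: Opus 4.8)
The plan is to pin down the super-diagonal entries of $\Phi$ by stability, to read off its characteristic polynomial from the nilpotency of the residues, and then to reduce $\Phi$ to companion form using the canonical sub-line bundle $\mathcal{O}(1)\subset W$ as a cyclic subbundle. First I would list the admissible entries of $\Phi$ relative to the splitting $W=\mathcal{O}(-1)\oplus\mathcal{O}\oplus\mathcal{O}(1)=L_1\oplus L_2\oplus L_3$. Specializing \eqref{eq:orderentries} to this bundle and twisting by $\mathcal{O}(\mathfrak{D})=\mathcal{O}(3)$, the entry $\Phi_{ij}$ (the component $L_j\to L_i\otimes N$, where $N:=K\mathcal{O}(\mathfrak{D})=\mathcal{O}(1)$) is a section of $\mathcal{O}(\ord\Phi_{ij}+3)$; in particular $\Phi_{13}\in H^0(\mathcal{O}(-1))=0$, while $\Phi_{12},\Phi_{23}\in H^0(\mathcal{O})$ are constants. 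Stability enters through the canonical subbundles $L_3=\mathcal{O}(1)$ and $L_2\oplus L_3=\mathcal{O}\oplus\mathcal{O}(1)$, both of non-negative degree: since $L_3$ is $\Phi$-invariant exactly when $\Phi_{13}=\Phi_{23}=0$ and $L_2\oplus L_3$ exactly when $\Phi_{13}=\Phi_{12}=0$, non-invariance forces $\Phi_{23}\ne 0$ and $\Phi_{12}\ne 0$. Being nonzero elements of $H^0(\mathcal{O})$ they are nowhere vanishing, so $\Phi_{23}\colon L_3\xrightarrow{\ \sim\ }L_2\otimes N$ and $\Phi_{12}\colon L_2\xrightarrow{\ \sim\ }L_1\otimes N$ are isomorphisms, which I normalise to the canonical section ${\bf 1}$ by a diagonal automorphism of $W$.

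Next I would extract the characteristic polynomial of $\Phi$. Since $(W,\Phi)$ is strongly parabolic, each residue $R_k=\Res_{p_k}\Phi$ is nilpotent, so $\tr R_k^2=0$ and $\det R_k=0$; writing $\Phi=(R_k/z+\dots)\,dz$ near $p_k$, these annihilate the top-order poles of the a priori meromorphic invariants $\tr\Phi^2$ and $\det\Phi$. Consequently $\tr\Phi^2$ lies in $H^0(K^2\mathcal{O}(\mathfrak{D}))=H^0(\mathcal{O}(-1))=0$ and hence vanishes identically, while $\det\Phi$ lies in $H^0(K^3\mathcal{O}(2\mathfrak{D}))=H^0(\mathcal{O})$. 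Writing $Q:=\det_W\Phi$, trace-freeness together with $\tr\Phi^2=0$ shows that the characteristic polynomial of $\Phi$ is identically $\lambda^3-Q$, and $Q$ is exactly a cubic differential with at most quadratic poles along $\mathfrak{D}$, as claimed.

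Finally I would reduce to companion form. Iterating $\Phi$ on the inclusion of the cyclic subbundle $L_3\hookrightarrow W$ gives maps $L_3\to W$, $L_3\to W\otimes N$ and $L_3\to W\otimes N^2$, hence, after untwisting, a bundle map $\Theta\colon L_3\oplus L_3N^{-1}\oplus L_3N^{-2}\to W$ whose source is again $\mathcal{O}(1)\oplus\mathcal{O}\oplus\mathcal{O}(-1)$. Its associated-graded components are $\operatorname{id}$, $\Phi_{23}$ and $\Phi_{12}\Phi_{23}$, all isomorphisms, so $\Theta$ is an isomorphism of bundles. By Cayley--Hamilton for the polynomial $\lambda^3-Q$ one has $\Phi^3=Q\cdot\operatorname{id}$, and this is precisely the relation making $\Theta$ intertwine $\Phi$ with the companion Higgs field, i.e. with the form \eqref{eq:S2Higgsfield} for this $Q$. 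Uniqueness is then immediate, since $Q=\det_W\Phi$ is a conjugation invariant of $(W,\Phi)$ and distinct values of $Q$ visibly give non-isomorphic Higgs bundles.

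The step I expect to be the main obstacle is the last one: I must check that $\Theta$ is a genuine \emph{global} isomorphism across the punctures $p_k$---where $\Phi$ is only meromorphic---and that the resulting intertwining is an honest isomorphism of parabolic Higgs bundles rather than a merely meromorphic gauge. This is where the precise twist $N=\mathcal{O}(1)$ and the degrees of the three summands must be tracked, and where the residue computation of the preceding paragraph (ensuring the characteristic polynomial is \emph{globally} $\lambda^3-Q$) is indispensable. Everything else rests only on the vanishing $H^0(\mathcal{O}(-1))=0$ and the identification $K^3\mathcal{O}(2\mathfrak{D})=\mathcal{O}$ on $S^2$.
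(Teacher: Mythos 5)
Your proof is correct, but after the common first step it takes a genuinely different route from the paper's. Both arguments begin identically: \eqref{eq:orderentries} forces $\Phi_{13}=0$, and stability applied to the would-be invariant subbundles $\mathcal{O}(1)$ and $\mathcal{O}\oplus\mathcal{O}(1)$ forces $\Phi_{23}\neq 0$ and $\Phi_{12}\neq 0$, both constants in $H^0(\mathcal{O})$. From there the paper gauge-fixes: it shows there is a unique unipotent lower-triangular $a\in H^0({\bf SL}(W))$ such that $a^{-1}\Phi a$ has zero diagonal and zero $(2,1)$-entry, by solving linear equations successively for $a_{21},a_{31},a_{32}$, and only then imposes nilpotency of the residues entry-wise: $\Res_{p_k}\Phi_{32}=\Res_{p_k}\Phi_{31}=0$, so the residue-free $\Phi_{32}$ is a holomorphic section of $\mathcal{O}(-1)$, hence zero, while the residue-free $\Phi_{31}$ is a constant, which is the desired $Q$. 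You instead extract the conjugation invariants globally: nilpotency of the residues kills the leading poles, so $\tr\Phi^2\in H^0(K^2\mathcal{O}(\mathfrak{D}))=H^0(\mathcal{O}(-1))=0$ and $Q:=\det\Phi\in H^0(K^3\mathcal{O}(2\mathfrak{D}))=\C$; then Cayley--Hamilton gives $\Phi^3=Q\cdot\mathrm{id}$, and the cyclic subbundle $\mathcal{O}(1)\subset W$ produces the companion form via your map $\Theta$. Your route buys conceptual clarity---it explains why the Higgs bundle is cyclic, and uniqueness of $Q$ is free since $Q=\det\Phi$ is an isomorphism invariant---while the paper's buys an explicit, unique normalizing gauge by elementary linear algebra. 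One reassurance: the obstacle you flag at the end is not a real one. Since $\Phi\in H^0(K{\bf sl}(W)\mathcal{O}(\mathfrak{D}))$, the maps $\iota$, $\Phi\iota$, $\Phi^2\iota$ are honest holomorphic bundle maps into twists of $W$ (the poles are absorbed by the twist $N=K\mathcal{O}(\mathfrak{D})$), and because $\Phi_{13}=0$ the map $\Theta$ is anti-triangular with anti-diagonal entries $\mathrm{id}$, $\Phi_{23}$, $\Phi_{12}\Phi_{23}$; its determinant is then a global section of $\mathcal{O}$, i.e.\ a constant, equal to that nonzero product, so $\Theta$ is an isomorphism everywhere, punctures included. (Note also that your normalization $\Phi_{12}=\Phi_{23}={\bf 1}$ is dispensable in your route: only non-vanishing enters, and the companion construction produces the sections ${\bf 1}$ automatically.)
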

\begin{proof}
We write the Higgs field $\Phi=(\Phi_{ij})$ as a matrix in the decomposition $W=\mathcal{O}(-1)\oplus\mathcal{O}\oplus\mathcal{O}(1)$. Using  \eqref{eq:orderentries}, we see that $\Phi_{13}=0$. If $\Phi_{23}=0$ or $\Phi_{12}=0$, then $\mathcal{O}(1)$ or $\mathcal{O}\oplus\mathcal{O}(1)$ are $\Phi$-invariant subbundles of positive degree, contradicting stability. Since $\Phi_{12},\Phi_{23}\in H^0(K\mathcal{O}(-1)\mathcal{O}(\frak{D}))=H^0(\mathcal{O})$, we conclude $\Phi_{12}=\Phi_{23}={\bf 1}$ after a scaling. 
Next we claim that there is a unique  holomorphic bundle isomorphism $a\in H^0({\bf SL}(W))$, given in  the decomposition $W=\mathcal{O}(-1)\oplus\mathcal{O}\oplus\mathcal{O}(1)$ by 
\[
a=\begin{pmatrix} 1&0&0\\a_{21}&1&0\\ a_{31}&a_{32}&1\end{pmatrix}\,,
\]
such that $a^{-1}\Phi a$ has the form \eqref{eq:S2Higgsfield} as stated in the lemma. This amounts to solving linear equations successively for the entries of $a$. For instance, the first equation we get is 
\[
\Phi_{11}+a_{21}{\bf 1}=0\,,
\]
which has the solution $a_{21}=-\Phi_{11}$. Note that this is well defined, since $a_{21},\Phi_{11}\in H^0(\mathcal{O}(1))$. Continuing this process,  we can determine all entries of $a$ uniquely and 
\[
\Phi=\begin{pmatrix} 0 &{\bf 1} & 0\\ 0 & 0 &{\bf 1} \\ \Phi_{31}& \Phi_{32} & 0 \end{pmatrix}\,.
\]
So far, we have not used the maximal nilpotency of the residues $\Res_{p_k}\Phi$, which in our situation is characterized by 
\[
\tr (\Res_{p_k}\Phi)^2=\tr (\Res_{p_k}\Phi)^3=0\,.
\]
Since $\Res_{p_k}{\bf 1}\neq 0$, these conditions unravel to $\Res_{p_k} \Phi_{31}=\Res_{p_k} \Phi_{32}=0$. Now $\Phi_{31}\in H^0(K\mathcal{O}(2)\mathcal{O}(\frak{D}))=H^0(\mathcal{O}(\frak{D}))$ is a function with at most simple poles at $p_k$ and therefore $\Phi_{31}\in\C=H^0(K^3\mathcal{O}(2\frak{D}))$ has to be constant. Similarly, $\Phi_{32}\in H^0(\mathcal{O}(-1)\mathcal{O}(\frak{D}))$ is without residues at $p_k$ and thus, as a holomorphic section of $\mathcal{O}(-1)$, has to vanish.
\end{proof}
For the classification of stable strongly parabolic Higgs bundles $(\mathcal{O}^{\oplus 3},\Phi)$, let $z$ denote the unique affine chart  on $S^2$ with $z(p_1)=0$, $z(p_2)=1$ and $z(p_3)=\infty$.
\begin{Lem}\label{lem:trivialHiggs}
Consider $[(W,\Phi)]\in\mathcal{M}_B^s$ with underlying bundle $W=\mathcal{O}^{\oplus 3}$ and denote by $\ell_k=\ker\Res_{p_k}\Phi \subset W_{p_k}$ the 1-dimensional kernel of the maximally nilpotent residues of the Higgs field at the punctures $p_k\in S^2$.
Then, up to constant ${\bf SL}_3(\C)$ conjugation, which is unique up to a scaling by a cube root of unity, we have the following families of  mutually non-isomorphic Higgs fields characterized by the configurations of the kernels $\ell_k$ of the residues:
\begin{enumerate}[(i)]
\item
If $W=\ell_1\oplus\ell_2\oplus \ell_3$ and $\ker(\Res_{p_1}\Phi)^2\cap \ell_2\oplus \ell_3\neq \ell_2, \ell_3$, then
\begin{equation}\label{eq:Higgsgen}
\Phi=
x \begin{pmatrix} 
0& y+1&y\\
0& 1& 1\\
0& -1& -1\\
\end{pmatrix}\frac{dz}{z}+
x\begin{pmatrix} 
-1& 0& -y\\
-\tfrac{1}{1+y}& 0& -1\\
\tfrac{1}{y}& 0& 1\\
\end{pmatrix}\frac{dz}{z-1} 
\end{equation}
for unique parameters $(x,y)\in\C^2\setminus\{(0,0),(0,-1)\}$.
\item
If $W=\ell_1\oplus\ell_2\oplus \ell_3$ and $\ker(\Res_{p_1}\Phi)^2\cap \ell_2\oplus \ell_3=\ell_2$, then
\begin{equation}\label{eq:Higgstriv+}
\Phi=
\begin{pmatrix} 0& 1&0\\0&0&1\\
0&0&0\end{pmatrix}\frac{dz}{z}+
\begin{pmatrix} 
0& 0&0 \\0&0&-1\\
x&0&0\end{pmatrix}\frac{dz}{z-1}
\end{equation}
for a unique parameter $x\in\C^{\times}$.
\item
If $W=\ell_1\oplus\ell_2\oplus \ell_3$ and $\ker(\Res_{p_1}\Phi)^2\cap \ell_2\oplus \ell_3=\ell_3$, then
\begin{equation}\label{eq:Higgstriv-}
\Phi=
\begin{pmatrix} 0& 0&1\\0&0&0\\
0&1&0\end{pmatrix}\frac{dz}{z}+
\begin{pmatrix} 
0& 0&-1 \\x&0&0\\
0&0&0\end{pmatrix}\frac{dz}{z-1}
\end{equation}
for a unique parameter $x\in\C^{\times}$.
\item
If $\dim \ell_1\oplus\ell_2\oplus \ell_3\leq 2$, then 
\begin{equation}\label{eq:Higgstriv0}
\Phi=
\begin{pmatrix} 0& 0&x\\0&0&0\\
0&1&0\end{pmatrix}\frac{dz}{z}+
\begin{pmatrix} 
0& 0&0 \\0&0&-x\\
1&0&0\end{pmatrix}\frac{dz}{z-1}\end{equation}
for a unique parameter $x\in\C^{\times}$.
 \end{enumerate}
\end{Lem}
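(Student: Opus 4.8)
The plan is to reduce the classification to a problem in linear algebra about triples of regular nilpotent matrices. First I would use that every holomorphic endomorphism of the trivial bundle $\mathcal{O}^{\oplus 3}$ is constant, so isomorphisms of such Higgs bundles act by constant ${\bf SL}_3(\C)$-conjugation, and that a trace-free meromorphic $1$-form on $S^2$ with at most simple poles exactly at $p_1,p_2,p_3$ necessarily has the form
\[
\Phi=R_1\,\frac{dz}{z}+R_2\,\frac{dz}{z-1}\,,\qquad R_1,R_2\in{\bf sl}_3(\C)\,,
\]
with residue $R_3=-(R_1+R_2)$ at $p_3=\infty$; no polynomial part survives, as it would force a double pole at $\infty$. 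By Remark~\ref{rem:maxuni} each $R_k=\Res_{p_k}\Phi$ is maximally nilpotent, i.e.\ regular nilpotent with one-dimensional kernel $\ell_k=\ker R_k$ and unique invariant plane $P_k=\ker R_k^2=\im R_k$. Thus the problem becomes the classification of triples $(R_1,R_2,R_3)$ of regular nilpotents with $R_1+R_2+R_3=0$, up to simultaneous conjugation.

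Next I would translate stability into the relative position of these flags. Since every degree-zero subbundle of $\mathcal{O}^{\oplus 3}$ is a constant subspace, a destabilizing $\Phi$-invariant subbundle is a common invariant line or a common invariant plane of $R_1,R_2$, and hence of $R_3$ by the relation $\sum R_k=0$. For a regular nilpotent the only invariant line is $\ell_k$ and the only invariant plane is $P_k$; using $\sum R_k=0$ one checks that a common kernel forces $\ell_1=\ell_2=\ell_3$ while a common invariant plane forces $P_1=P_2=P_3$. Hence $(W,\Phi)$ is stable precisely when the three kernels are distinct \emph{and} the three planes are distinct.

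The case division then records the position of the kernels: either they span, giving $W=\ell_1\oplus\ell_2\oplus\ell_3$ (cases (i)--(iii)), or $\dim(\ell_1+\ell_2+\ell_3)=2$ (case (iv)); and in the spanning situation one further records whether the line $P_1\cap(\ell_2\oplus\ell_3)$ is generic (i), equals $\ell_2$ (ii), or equals $\ell_3$ (iii). These alternatives are exhaustive and mutually exclusive. In each case I would fix an adapted basis---for instance $e_k$ spanning $\ell_k$ in the spanning cases---which consumes most of the conjugation freedom, write $R_1$ with vanishing first column, $R_2$ with vanishing second column, and the constraint that $R_1+R_2$ annihilate $e_3$, and then impose that $R_1$, $R_2$ and $R_1+R_2$ each have $\tr(R^2)=0$, $\det R=0$ and $R^2\neq 0$ (regular nilpotency). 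Solving these polynomial relations and absorbing the residual diagonal gauge should produce exactly the stated normal forms, with free parameters $(x,y)$ in (i) and $x$ in (ii)--(iv); the excluded values $(0,0),(0,-1)$ and $x\neq 0$ are the loci where stability fails or a case collapses into another.

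I expect the main obstacle to be the bookkeeping in the spanning case (i): pinning down the residual gauge group after the three kernel lines are fixed, showing that the nilpotency-plus-regularity equations cut the remaining entries down to precisely the two essential parameters in the rational form \eqref{eq:Higgsgen}, and proving that distinct $(x,y)$ yield non-isomorphic Higgs bundles. This last point reduces to computing the stabilizer of a stable triple and verifying it is exactly the center $\{\mu\,\mathrm{I}:\mu^3=1\}$, which simultaneously yields the ``unique up to a cube root of unity'' clause. The special cases (ii)--(iv) should then follow by the same method with one fewer parameter once the degenerate incidence $\ell_j\subset P_1$, respectively the coplanarity of the kernels, is imposed.
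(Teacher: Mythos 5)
Your plan follows the paper's proof essentially step for step: you reduce $\Phi$ to the residue pair $R_1,R_2$ with $R_3=-(R_1+R_2)$ (no polynomial part, by the pole order at $\infty$), invoke maximal nilpotency from Remark~\ref{rem:maxuni}, split into the same four cases according to the position of the kernel lines $\ell_k$ and of $\ker(\Res_{p_1}\Phi)^2\cap(\ell_2\oplus\ell_3)$, fix a basis adapted to the $\ell_k$, impose the trace/nilpotency conditions, and normalize with the residual diagonal gauge---which is exactly the paper's computation. The only (harmless) organizational difference is that you extract the stability criterion up front (kernels pairwise distinct and invariant planes pairwise distinct), whereas the paper invokes stability ad hoc inside each case; both are correct.
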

\begin{proof}
First assume that the kernels of the residues $W=\ell_1\oplus\ell_2\oplus \ell_3$ span $W$. We can conjugate so that  $\ell_k=\C e_k$ for the standard basis $e_k\in\C^3$, which fixes any further conjugations to be diagonal. Then the residues at $p_1,p_2$ are of the  form
\[
\Res_{p_1}\Phi=\begin{pmatrix} 0&a_1&b_1\\0&a_2&b_2\\0&a_3&-a_2\end{pmatrix}\,,
\qquad 
\Res_{p_2}\Phi=\begin{pmatrix} -a_2&0&-b_1\\c_2&0&-b_2\\c_3&0&a_2\end{pmatrix}\,,
\]
where we used that $\Res_{p_3}\Phi=-\Res_{p_1}\Phi-\Res_{p_2}\Phi$. Next we evaluate the nilpotency conditions on the three residues 
\begin{equation}\label{eq:restracecon}
\begin{gathered}
\tr(\Res_{p_1}\Phi)^2=2(a_2^2+a_3b_2)=0\,,\qquad \tr(\Res_{p_2}\Phi)^2=2(a_2^2-b_1c_3)=0\\
\tr(\Res_{p_3}\Phi)^2=2\tr(\Res_{p_1}\Phi\Res_{p_2}\Phi)=2(a_2^2+a_1c_2)=0
\end{gathered}
\end{equation}
and, using the first equation above,  calculate
\begin{equation}\label{eq:res1^2}
(\Res_{p_1}\Phi)^2=\begin{pmatrix} 0& a_1a_2+a_3b_1& a_1b_2-a_2b_1\\0&0&0\\0&0&0\end{pmatrix}.
\end{equation}
We now assume the generic configuration  $\ker(\Res_{p_1}\Phi)^2\cap \ell_2\oplus \ell_3\neq \ell_2, \ell_3$. Then, given $\alpha\neq 0$, there is a unique diagonal conjugation so that 
\[
a_1a_2+a_3b_1=a_1b_2-a_2b_1=\alpha\,,
\]
which geometrically  means $\ker(\Res_{p_1}\Phi)^2\cap \ell_2\oplus \ell_3=\C(e_2-e_3)$. Combining these last two equations with the trace conditions \eqref{eq:restracecon} gives $a_2=-a_3=b_2$ and thus 
\[
\Res_{p_1}\Phi=\begin{pmatrix} 0&a_1&b_1\\0&a_2&a_2\\0&-a_2&-a_2\end{pmatrix}.
\]
 Since $a_1b_2-a_2b_1=\alpha\neq 0$ and $a_2=b_2$, we have $a_2\neq 0$ and $a_2(a_1-b_1)=\alpha$.  Therefore, choosing $\alpha=a_2^2$ and setting $x:=a_2$, $y:=b_1/a_2$, we obtain 
 \[
 \Res_{p_1}\Phi=x \begin{pmatrix} 
0& y+1&y\\0& 1& 1\\0& -1& -1\\
\end{pmatrix} 
\]
Note that $y\neq -1$, since otherwise $a_1=0$ which in turn would imply $a_2=0$ by the third equation in \eqref{eq:restracecon}. Finally, 
using \eqref{eq:restracecon} again,  we determine the remaining coefficients in $\Res_{p_2}\Phi$ in terms of the parameters $(x,y)\in\C^2\setminus\{(0,0),(0,-1)\}$. 

The simpler configurations, where $\ker(\Res_{p_1}\Phi)^2\cap \ell_2\oplus \ell_3$ are either $\ell_2$ or $\ell_3$, are analyzed in the same way. In both cases, stability of the Higgs bundle $(W,\Phi)$  implies that the parameter $x\neq 0$. 
 
 At last we consider  $\dim \ell_1\oplus\ell_2\oplus \ell_3\leq 2$. If two of the lines $\ell_k$ are the same, then all three have to coincide, and $\mathcal{O}\subset W$ is a $\Phi$-invariant subbundle of zero degree, contradicting stability of $(W,\Phi)$.  Hence, we may assume $\ell_3\subset \ell_1\oplus \ell_2$ and $\ell_3\neq \ell_1,\ell_2$.  We conjugate $\Phi$ so that $\ell_1=\C e_1$, $\ell_2=\C e_2$, $\ell_3=\C(e_1-e_2)$, and choose a yet to be determined line $\C v$ transverse to $\ell_1\oplus\ell_2$. Then the residues of the Higgs field $\Phi$ are given by 
 \[
\Res_{p_1}\Phi=\begin{pmatrix} 0&a_1&b_1\\0&a_2&b_2\\0&a_3&-a_2\end{pmatrix}\,,
\qquad 
\Res_{p_2}\Phi=\begin{pmatrix} a_1&0&d_1\\a_2&0&d_2\\a_3&0&-a_1\end{pmatrix}\,,
\]
and
\[
\Res_{p_3}\Phi=-\Res_{p_1}\Phi-\Res_{p_2}\Phi=
\begin{pmatrix} -a_1&-a_1&-d_1\\-a_2&-a_2&-d_2\\-a_3&-a_3&a_1+a_2
\end{pmatrix}.
\]
As before, we evaluate the nilpotency conditions on the residues
\begin{equation}\label{eq:tracecon0}
\begin{gathered}
\tr(\Res_{p_1}\Phi)^2=2(a_2^2+a_3b_2)=0\,,\qquad \tr(\Res_{p_2}\Phi)^2=2(a_1^2+a_3d_1)=0\\
\tr(\Res_{p_3}\Phi)^2=2\tr(\Res_{p_1}\Phi\Res_{p_2}\Phi)=2(2a_1a_2 + a_3(b_1+d_2))=0\,.
\end{gathered}
\end{equation}
Subtracting the third equation from the first two, we obtain $a_1=a_2$. If $\ker(\Res_{p_1}\Phi)^2=\ell_1\oplus \ell_2$, we conclude from \eqref{eq:res1^2} that $a_1^2+a_3b_1=0$. Maximal nilpotency implies  $(\Res_{p_1}\Phi)^2\neq 0$ and, using the first trace condition \eqref{eq:tracecon0}, we deduce that $a_3=0$ and hence $a_1=a_2=0$, a contradiction to $(\Res_{p_1}\Phi)^2\neq 0$. We therefore may assume that $\ker(\Res_{p_1}\Phi)^2$ contains a line $\C v$ transverse to $\ell_1\oplus\ell_2$. We choose $v=\Res_{p_1}\Phi\,e_2$, which gives us $a_1=a_2=0$, $a_3=1$, $b_2=d_1=0$,  $d_2=-b_1$ and thus
\[
\Res_{p_1}\Phi=\begin{pmatrix} 0&0&b_1\\0&0&0\\0&1&0\end{pmatrix}\,,\qquad 
\Res_{p_2}\Phi=\begin{pmatrix} 0&0&0\\0&0&-b_1\\1&0&0\end{pmatrix}.
\]
Setting $x=b_1$ and noticing that $b_1\neq 0$ due to maximal nilpotency, we obtain the  family \eqref{eq:Higgstriv0}.
\end{proof}
Recall that our objective is to show that the $\C$-family of stable strongly parabolic Higgs bundles $(\mathcal{O}(-1)\oplus \mathcal{O}\oplus \mathcal{O}(1),\Phi^{Q})$ in Lemma~\ref{lem:cyclicHiggs}, parametrized by meromorphic cubic differentials $Q\in H^0(K^3\mathcal{O}(2\frak{D}))$ with quadratic poles along  $\frak{D}=p_1+p_2+p_3$, maps surjectively onto the Hitchin component $\mathcal{C}_2\subset \mathcal{M}^s_B(\R)$. For that, we need to understand the image $\mathcal{M}^s_D(\R)$ of the locus of real representations $\mathcal{M}^s_B(\R)$ under the bijection 
\[
\mathcal{M}_B^{s}\cong \mathcal{M}_{D}^{s}\colon [\rho]\leftrightarrow [(W,\Phi)]\,.
\]
\begin{Lem}\label{lem:realDol}
A stable strongly parabolic Higgs bundle $(W,\Phi)$ over $S^2$ with singularity divisor $\mathfrak{D}=p_1+p_2+p_3$ gives rise to a real representation, that is $[(W,\Phi)]\in \mathcal{M}^s_D(\R)$, if and only if $(W,\Phi)=(\mathcal{O}(-1)\oplus \mathcal{O}\oplus \mathcal{O}(1),\Phi^Q)$ from Lemma~\ref{lem:cyclicHiggs} or $(W,\Phi)=(\mathcal{O}^{\oplus 3}, \Phi)$ lies in the family \eqref{eq:Higgsgen} of Lemma~\ref{lem:trivialHiggs} for $y=-1/2$.
\end{Lem}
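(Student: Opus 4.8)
The plan is to convert reality of the monodromy into a self-duality statement at the level of the Higgs bundle, and then run through the classification in Lemmas~\ref{lem:bundletype}, \ref{lem:cyclicHiggs}, and \ref{lem:trivialHiggs}. By Proposition~\ref{pro:reality} the representation attached to a stable strongly parabolic $(W,\Phi)$ satisfies $\rho\cong\bar\rho$ if and only if $(W,\Phi)\cong(W^*,\Phi^*)$; and since $\mathcal{X}_{\bar\rho}=\overline{\mathcal{X}_\rho}$, Theorem~\ref{thm:charvar}(iii) shows that in our setting $\rho\cong\bar\rho$ is equivalent to $[\rho]\in\mathcal{M}_B^{ps}(\R)$, i.e.\ to $(W,\Phi)$ being real. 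So the lemma reduces to deciding, for each isomorphism class in the classification, whether $(W,\Phi)\cong(W^*,\Phi^*)$. By Lemma~\ref{lem:bundletype} there are only two bundle types. For $W=\mathcal{O}(-1)\oplus\mathcal{O}\oplus\mathcal{O}(1)$ this is already settled inside the proof of Lemma~\ref{lem:3}: the symmetric anti-diagonal pairing $b$ permuting $\mathcal{O}(-1)$ and $\mathcal{O}(1)$ gives $\Phi=\Phi^{t_b}$ for \emph{every} cubic differential $Q$, so the entire $\C$-family $\Phi^Q$ is real. It therefore remains to treat $W=\mathcal{O}^{\oplus 3}$ and to single out, among \eqref{eq:Higgsgen}--\eqref{eq:Higgstriv0}, the self-dual ones.

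For $W=\mathcal{O}^{\oplus 3}$ I would make the criterion concrete. Since $W^*\cong\mathcal{O}^{\oplus 3}$ and $H^0(\End W)$ consists of constant matrices, any Higgs-bundle isomorphism $(W,\Phi)\to(W^*,\Phi^*)$ is a constant $B\in{\bf GL}_3(\C)$; writing $\Phi=R_1\tfrac{dz}{z}+R_2\tfrac{dz}{z-1}$ with $R_k=\Res_{p_k}\Phi$ and $R_3=-R_1-R_2$, the intertwining condition becomes the simultaneous system $BR_k=R_k^{t}B$ for $k=1,2$ (hence for $k=3$). These representations are irreducible (they lie in $\mathcal{M}_B^s$), so $B$ is unique up to scale; transposing $BR_kB^{-1}=R_k^{t}$ gives $B^{t}=\pm B$, and as a nondegenerate skew form is impossible in odd rank, any solution $B$ is symmetric. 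The organizing necessary constraint is that $B$ must carry $\ker R_k$ onto $\ker R_k^{t}=(\im R_k)^{\perp}$ for each $k$.

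I would then dispatch \eqref{eq:Higgstriv+}, \eqref{eq:Higgstriv-}, \eqref{eq:Higgstriv0} first: in each case the explicit residues put $\ker R_k$ along coordinate directions while forcing $\ker R_k^{t}$ into cyclically shifted (or, in \eqref{eq:Higgstriv0}, mutually incompatible) directions, so the kernel constraint pins the support of $B$ into a pattern with vanishing diagonal that cannot be both symmetric and nondegenerate---contradicting the symmetry just established. Hence none of these families is self-dual. For the generic family \eqref{eq:Higgsgen} I would solve the system directly: computing $\ker R_1^{t},\ker R_2^{t},\ker R_3^{t}$ fixes the three columns of $B$ up to scalars, giving $Be_1\in\C\,(0,1,1)^{t}$, $Be_2\in\C\,(1,0,y)^{t}$, $Be_3\in\C\,(1,-(1+y),0)^{t}$, that is $B=\begin{smatrix}0&b&c\\ a&0&-c(1+y)\\ a&by&0\end{smatrix}$. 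Imposing symmetry forces $b=a$, $c=a$, and $-c(1+y)=by$, whence $-(1+y)=y$, i.e.\ $y=-\tfrac12$; at that value $B=a\begin{smatrix}0&1&1\\ 1&0&-1/2\\ 1&-1/2&0\end{smatrix}$ is nondegenerate and one checks $BR_k=R_k^{t}B$ outright.

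The main obstacle is exactly this computation for \eqref{eq:Higgsgen}: one must see both that $y=-\tfrac12$ is forced and that it suffices. The point that keeps it manageable is the kernel-to-kernel constraint $B(\ker R_k)=(\im R_k)^{\perp}$, which collapses a nine-parameter search to three column directions and then to the single scalar identity $-(1+y)=y$; the only delicate bookkeeping is the normalization freedom left by the residual diagonal conjugations (and the cube-root-of-unity ambiguity recorded in Lemma~\ref{lem:trivialHiggs}), which must be tracked so as not to confuse a rescaling with a genuine obstruction. Assembling the cyclic case, the excluded families \eqref{eq:Higgstriv+}--\eqref{eq:Higgstriv0}, and the value $y=-\tfrac12$ in \eqref{eq:Higgsgen} then yields precisely the two classes named in the statement.
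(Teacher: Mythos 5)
Your proposal is correct and follows essentially the same route as the paper: reduce reality to the self-duality condition $(W,\Phi)\cong(W^*,\Phi^*)$ via Proposition~\ref{pro:reality} and Theorem~\ref{thm:charvar}(iii), handle the bundle $\mathcal{O}(-1)\oplus\mathcal{O}\oplus\mathcal{O}(1)$ by the pairing already used in Lemma~\ref{lem:3}, and then decide self-duality for each family of Lemma~\ref{lem:trivialHiggs} by solving the constant-matrix intertwining system $BR_k=R_k^tB$. The only difference is organizational: the paper solves that linear system directly (finding $\det g=0$ for \eqref{eq:Higgstriv+}--\eqref{eq:Higgstriv0} and $b=-a/2$, $c=0$, $y=-1/2$ for \eqref{eq:Higgsgen}), whereas you first force symmetry of $B$ by a Schur-type argument and then exploit the kernel-to-kernel constraints---a valid shortcut that reaches the same conclusion.
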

\begin{proof}
Lemmas~\ref{lem:bundletype}, \ref{lem:cyclicHiggs} and \ref{lem:trivialHiggs} show that the stable Dolbeault space $\mathcal{M}_D^s$ is parametrized by the families $(\mathcal{O}(-1)\oplus \mathcal{O}\oplus \mathcal{O}(1),\Phi^Q)$ and $(\mathcal{O}^{\oplus 3}, \Phi)$ 
given by \eqref{eq:Higgsgen}, \eqref{eq:Higgstriv+}, \eqref{eq:Higgstriv-}, and \eqref{eq:Higgstriv0}. As we have already shown in Lemma~\ref{lem:3}, the family $(\mathcal{O}(-1)\oplus \mathcal{O}\oplus \mathcal{O}(1),\Phi^Q)$ corresponds to real representations. 
Thus, it remains to investigate the families from Lemma~\ref{lem:trivialHiggs}. From Theorem~\ref{thm:charvar} (iii), we know that a representation $[\rho]\in\mathcal{M}_B^s$ corresponding to a Higgs bundle $[(W,\Phi)]\in\mathcal{M}_D^{s}$ is real, if an only if  the representation $\rho$ is equivalent to  its complex conjugate representation $\bar{\rho}$. By  Proposition~\ref{pro:reality}, the latter is equivalent to the existence of a Higgs bundle isomorphism $(W,\Phi)\cong (W^*, \Phi^*)$. Choosing the standard inner product on $\mathcal{O}^{\oplus 3}$, which provides an isomorphism $(W^*, \Phi^*)\cong (W,\Phi^t)$, we thus have to check for which of the families in Lemma~\ref{lem:trivialHiggs} there exists $g\in {\bf SL}(3,\C)$ such that $g\Phi=\Phi^t g$.  This last condition is equivalent to the system of linear equations $g\Res_{p_k}\Phi=\Res_{p_k} \Phi^t g$, $k=1,2$, for the matrix  $g=(g_{ij})$.  For the first family \eqref{eq:Higgsgen}, the equation $g\Res_{p_1}\Phi=\Res_{p_1}\Phi^t g$ implies 
\[
g=\begin{pmatrix} 0& a& a\\ a&0& b\\a&b &c\end{pmatrix}.
\]
Evaluating the second equation $g\Res_{p_2}\Phi=\Res_{p_2}\Phi^t g$, we obtain $b=-\tfrac{a}{2}$, $c=0$, and $y=-1/2$. Then $1=\det g=-a^3$ determines  $g\in {\bf SL}_2(\C)$ up to a cube root of unity. 

The considerations for the remaining three families in Lemma~\ref{lem:trivialHiggs} are analogous. In each of these cases one quickly sees that $\det g=0$, and therefore none of these families give rise to real representations.
\end{proof}
\begin{The}\label{thm:last}
Consider the 2-sphere $S^2$ with singularity divisor $\frak{D}=p_1+p_2+p_3$ and $\Sigma=S^2\setminus\{p_1,p_2,p_3\}$ the thrice-punctured sphere. Let 
$\mathcal{M}^{ps}_B(\R)=\mathcal{C}_1\cup \mathcal{C}_2$ be the Betti space of completely reducible $ {\bf SL}_3(\R)$ representations of $\pi_1(\Sigma)$. Then we have:    
\begin{enumerate}[(i)]
\item 
The map
\[
 H^0(K^3\mathcal{O}(2\frak{D}))\to \mathcal{C}_2\colon Q\mapsto [\rho^Q]
\]
is a homeomorphism, where $\rho^Q$ is 
the representation corresponding to the Higgs bundle  $(\mathcal{O}(-1)\oplus \mathcal{O}\oplus \mathcal{O}(1),\Phi^Q)$ from Lemma~\ref{lem:cyclicHiggs}. 
\item
The map
\[
\C\to \mathcal{C}_1 \colon x\mapsto [\rho^x]\,,
\]
assigning the family of Higgs bundles $(\mathcal{O}^{\oplus 3},\Phi^{(x,-1/2)})$  from Lemma~\ref{lem:trivialHiggs}, \eqref{eq:Higgsgen}
to the corresponding representation $\rho^x$, is a homeomorphism.
\end{enumerate}
\end{The}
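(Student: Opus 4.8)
The plan is to obtain both statements by combining Simpson's non-Abelian Hodge correspondence with the classification of real stable parabolic Higgs bundles established above, and then to pin down the topology by a connectedness-and-counting argument followed by invariance of domain. By Theorem~\ref{thm:nAH} the correspondence is a bijection $\mathcal{M}_D^{ps}\cong\mathcal{M}_B^{ps}$, and by Proposition~\ref{pro:reality} together with Theorem~\ref{thm:charvar}(iii) it restricts to a bijection between the real Dolbeault locus---the Higgs bundles with $(W,\Phi)\cong(W^*,\Phi^*)$---and $\mathcal{M}_B^{ps}(\R)=\mathcal{C}_1\cup\mathcal{C}_2$. Lemmas~\ref{lem:bundletype}, \ref{lem:cyclicHiggs}, \ref{lem:trivialHiggs} and \ref{lem:realDol} identify this real Dolbeault locus as exactly two families: the cyclic family $(\mathcal{O}(-1)\oplus\mathcal{O}\oplus\mathcal{O}(1),\Phi^Q)$ with $Q\in H^0(K^3\mathcal{O}(2\mathfrak{D}))\cong\C$, and the trivial-bundle family $(\mathcal{O}^{\oplus 3},\Phi^{(x,-1/2)})$ with $x\in\C$, the value $x=0$ giving the trivial polystable Higgs bundle. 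Each parametrization $Q\mapsto[(W,\Phi^Q)]$ and $x\mapsto[(\mathcal{O}^{\oplus3},\Phi^{(x,-1/2)})]$ is a continuous injection into $\mathcal{M}_D^{ps}$, injectivity holding because $Q$, respectively $x$, is recovered as a Higgs-bundle invariant in Lemmas~\ref{lem:cyclicHiggs} and \ref{lem:trivialHiggs}.

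First I would run the matching argument. Both parameter spaces are connected and the correspondence is continuous, so each family maps into a single component of $\mathcal{M}_B^{ps}(\R)$. The cyclic family contains $Q=0$, whose representation is the uniformization representation lying in the Hitchin component $\mathcal{C}_2$ (Lemma~\ref{lem:3} and Theorem~\ref{thm:charvar}(iii)); hence the whole cyclic family lands in $\mathcal{C}_2$. The trivial-bundle family contains $x=0$, whose representation is the trivial one in $\mathcal{C}_1$; hence that family lands in $\mathcal{C}_1$. Since the two families exhaust the real Dolbeault locus and the correspondence is a bijection onto $\mathcal{C}_1\cup\mathcal{C}_2$ with $\mathcal{C}_1\cap\mathcal{C}_2=\emptyset$, counting forces $Q\mapsto[\rho^Q]$ to be a bijection onto $\mathcal{C}_2$ and $x\mapsto[\rho^x]$ to be a bijection onto $\mathcal{C}_1$. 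This establishes bijectivity in (i) and (ii).

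It remains to upgrade these continuous bijections to homeomorphisms. For (i) the target $\mathcal{C}_2$ is smooth of real dimension two by Theorem~\ref{thm:charvar}(iii), the domain $\C$ is a $2$-manifold, and a continuous bijection between $2$-manifolds is a homeomorphism by invariance of domain, exactly as in the proof of Theorem~\ref{thm:charvar}(ii). For (ii) the same argument applies over $\C^{\times}$, giving a homeomorphism onto the smooth locus $\mathcal{C}_1\setminus\{[\mathrm{I}]\}$; what is left is continuity of the inverse at the trivial representation.

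The main obstacle is precisely this trivial point. The component $\mathcal{C}_1$ is singular there: in the coordinates $a=x-3$, $b=y-3$, $c=z-3$ the quadratic part of Lawton's polynomial \eqref{eq:polynom} at $(3,3,3)$ is the perfect square $(3a+3b+c)^2$, so the tangent cone of $\mathcal{F}$ is a double plane and invariance of domain cannot be applied at $[\mathrm{I}]$. The clean resolution is to invoke that Simpson's parabolic correspondence is a homeomorphism---not merely a bijection---of the full polystable moduli spaces; then the homeomorphism type near $[\mathrm{I}]$ is transported from the Dolbeault side, where the trivial-bundle family is manifestly parametrized by $\C$ through $x=0$. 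Alternatively one can argue properness of $x\mapsto[\rho^x]$, showing that $[\rho^{x_n}]\to[\mathrm{I}]$ forces $x_n\to0$; this is the only step that genuinely requires analytic input beyond the algebraic classification, and I expect it to be the crux of a fully self-contained proof.
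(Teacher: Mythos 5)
Your proposal follows the paper's proof almost step for step: the paper likewise combines Simpson's bijection $\mathcal{M}_{D}^{ps}\cong\mathcal{M}_B^{ps}$ with the classification in Lemmas~\ref{lem:bundletype}, \ref{lem:cyclicHiggs}, \ref{lem:trivialHiggs}, \ref{lem:realDol}, obtains continuity (in fact real analyticity, via \cite{KW}) of $Q\mapsto[\rho^Q]$ and of $x\mapsto[\rho^x]$ away from $x=0$ with a continuous extension into $x=0$, deduces bijectivity onto $\mathcal{C}_2$ respectively $\mathcal{C}_1$ by exactly your connectedness-and-counting argument, and then concludes with invariance of domain. The single point where you depart from the paper is the trivial representation: the paper applies invariance of domain to both maps without further comment, while you flag $[\mathrm{I}]\in\mathcal{C}_1$ as an obstruction and reach for heavier machinery.

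That final step is where your proposal has its one real weakness. Your first resolution---invoking that the parabolic non-Abelian Hodge correspondence is a homeomorphism of moduli spaces---is not citable as stated: \cite{S} gives a bijection, the paper extracts continuity from \cite{KW} only along smooth families of stable parabolic Higgs bundles, and the paper's closing remark explicitly treats the homeomorphism $\mathcal{M}_{D}^{ps}\cong\mathcal{M}_B^{ps}$ as something one ``could then conclude'' with additional work; so it cannot simply be invoked here. Your properness alternative is left unproven. Neither, however, is needed, because your own expansion at $(3,3,3)$ already closes the gap: the quadratic part is the perfect square you found, and since Lawton's polynomial is cubic the local equation is \emph{exactly}
\[
(3a+3b+c)^2+a^3+b^3-abc=0\,,\qquad\text{i.e.}\qquad u^2-ab\,u+(a+b)^3=0\quad\text{with } u=3a+3b+c\,.
\]
Its real locus consists of the two sheets $u_{\pm}=\tfrac12\bigl(ab\pm\sqrt{a^2b^2-4(a+b)^3}\bigr)$ over the region $\{a^2b^2\geq 4(a+b)^3\}$, glued along the boundary curve where the discriminant vanishes. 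Near the origin this region is homeomorphic to a closed half-plane (its boundary is the curve $a+b\sim 2^{-2/3}|a|^{4/3}$, tangent to the line $a+b=0$), and the double of a closed half-plane along its boundary is a plane. Hence $C_1$ \emph{is} a topological $2$-manifold at $(3,3,3)$, even though it is not smooth there: a degenerate tangent cone does not preclude local Euclideanness. Invariance of domain requires only that the target be a topological manifold, so it applies at $[\mathrm{I}]$ after all, and part (ii) is complete with no analytic input beyond what you already have. With this observation your argument---and the paper's one-line conclusion, which silently presupposes it---is fully justified.
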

\begin{proof}
From \cite{S}, we know that the correspondence 
\[
\mathcal{M}_{D}^{ps}  \cong \mathcal{M}_B^{ps}  \colon  [(W,\Phi)]\leftrightarrow   [\rho]
\]
between polystable parabolic Higgs bundles and completely reducible surface group representations is a bijection. Furthermore, \cite{KW} shows that along smooth families of parabolic Higgs bundles $(W,\Phi)$,  the harmonic metric $h$ depends real analytically on the parabolic Higgs bundle. Therefore, the corresponding flat connection $d=D^h+\Phi+\Phi^{\dagger_h}$, and thus its monodromy representation $\rho$, also depends real analytically on the Higgs bundle. We have seen, that the stable locus $\mathcal{M}^s_B$ of the Betti space is smooth and contains the Hitchin component $\mathcal{C}_2$. Since the Higgs bundle  $(\mathcal{O}(-1)\oplus \mathcal{O}\oplus \mathcal{O}(1),\Phi^Q)$ for $Q\equiv 0$ maps to the uniformization representation, the first map  of Theorem~\ref{thm:last} is well-defined, injective, and real analytic. For the second map, we note that the real analytic assignment  $x\mapsto [\rho^x]$ for $x\neq 0$ extends continuously into $x=0$  with image the trivial representation. Therefore, the second map of Theorem~\ref{thm:last} is well-defined, injective,  and continuous. Since $\mathcal{C}_1,\, \mathcal{C}_2\subset \mathcal{M}_{B}^{ps}$ are the connected components,  Lemma~\ref{lem:realDol} implies that both maps of Theorem~\ref{thm:last} are bijective. 
Applying invariance of domain, we thus can conclude that the maps are homeomorphisms. 
\end{proof}
\begin{Rem}
Using Lemmas~\ref{lem:cyclicHiggs} and \ref{lem:trivialHiggs}, one can show---with some additional  work---that $\mathcal{M}^{s}_D$ is a 2-dimensional complex manifold.
Combined with \cite{KW}, one could then conclude that the correspondence $\mathcal{M}_{D}^{s}  \cong \mathcal{M}_B^{s}$ is a real analytic diffeomorphism extending to a homeomorphism $\mathcal{M}_{D}^{ps}  \cong \mathcal{M}_B^{ps}$. Lemma~\ref{lem:realDol} characterizes the smooth real locus $\mathcal{M}^{s}_D(\R)$ as the two connected components given by the smooth families $(\mathcal{O}(-1)\oplus \mathcal{O}\oplus \mathcal{O}(1),\Phi^Q)$  and $(\mathcal{O}^{\oplus 3},\Phi^{(x,-1/2)})$, $x\neq 0$. These then correspond, via restriction of the real analytic diffeomorphism $\mathcal{M}_{D}^{s}\cong\mathcal{M}_B^{s}$, to $\mathcal{C}_2$ and $\mathcal{C}_1\setminus \{[\mathrm{I}]\}$.
\end{Rem}

 \bigskip
 \noindent \footnotesize \textsc{Beijing Institute of Mathematical Sciences and Applications}\\
\emph{E-mail address:}  \verb|sheller@bimsa.cn|
 
 \bigskip
 \noindent \footnotesize \textsc{Department of Mathematics, Washington University}\\
\emph{E-mail address:}  \verb|ouyang@math.wustl.edu|

\bigskip
\noindent \footnotesize \textsc{Department of Mathematics and Statistics, University of Massachusetts Amherst}\\
\emph{E-mail address:}  \verb|pedit@math.umass.edu|

\end{document}